\newcommand{\mathd}{\mathrm{d}}
\newcommand{\nin}{\not\in}
\newcommand{\tmdummy}{$\mbox{}$}
\newcommand{\tmem}[1]{{\em #1\/}}
\newcommand{\tmmathbf}[1]{\ensuremath{\boldsymbol{#1}}}
\newcommand{\tmop}[1]{\ensuremath{\operatorname{#1}}}
\newcommand{\tmtextbf}[1]{{\bfseries{#1}}}
\newcommand{\tmtextit}[1]{{\itshape{#1}}}
\newcommand{\tmtextrm}[1]{{\rmfamily{#1}}}
\newcommand{\tmtexttt}[1]{{\ttfamily{#1}}}
\newenvironment{enumerateroman}{\begin{enumerate}}{\end{enumerate}}
\theoremstyle{plain}
\newtheorem{theorem}{Theorem}[section]
\newtheorem{proposition}[theorem]{Proposition}
\newtheorem{corollary}[theorem]{Corollary}
\newtheorem{lemma}[theorem]{Lemma}
\theoremstyle{remark}
\newtheorem{remark}[theorem]{Remark}
\numberwithin{equation}{section}
\title{Charged polymers in the attractive regime: a first-order 
	transition from Brownian scaling to
	four-point localization}
\author{Yueyun Hu\\Universit\'e Paris 13 
	\and   Davar Khoshnevisan\thanks{Research supported by
	NSF grants DMS-0706728 and DMS-1006903}\\University of Utah
	\and Marc Wouts\\Universit\'e Paris 13}
\date{November 2, 2010}
\begin{document}\onehalfspacing

\maketitle

\begin{abstract}
  We study a quenched charged-polymer model, introduced by Garel and Orland
  in 1988, that reproduces the folding/unfolding transition of biopolymers. 
  We prove that, below the critical inverse temperature, 
  the polymer is delocalized in the sense
  that: (1) The rescaled trajectory of the polymer converges to the 
  Brownian path; and (2) The partition function remains bounded. 
  
  At the critical inverse temperature, we show that the maximum
  time spent at points jumps discontinuously from $0$ to a 
  positive fraction of the number of monomers, in the limit as the number of monomers
  tends to infinity.

  Finally, when the critical inverse temperature is large,
  we prove that the polymer collapses in the sense
  that a large fraction of its monomers live on four adjacent 
  positions, and its diameter grows only logarithmically
  with the number of the monomers. 
  
  Our methods also provide some insight into  the annealed phase
  transition and at the transition due to a pulling force; both phase
  transitions are shown to be discontinuous.\\

  {\noindent}\tmtextit{Keywords:} Charged polymers, quenched measure, annealed
  measure, localization-delocalization transition, first-order phase
  transition.\\
  
  {\noindent}\tmtextit{{\noindent}AMS 2000 subject classification:} Primary:
  60K35; Secondary: 60K37.
\end{abstract}

\section{Introduction and main results}

\subsection{The charged polymer model}

We consider a polymer model introduced by  Garel and Orland in {\cite{GO88b}}
for modeling the trajectory of biological proteins made of hydrophobic
monomers. Let $\{q_i \}_{i = 0}^{\infty}$ be i.i.d.\
real variables and $\{S_i\}_{i = 0}^{\infty}$ an independent simple 
random walk  on $\mathbf{Z}^d$ with $S_0 = 0$. 
Both stochastic processes exist on a common probability
space $(\Omega\,,\mathcal{F},\mathrm{P})$. 

Given a realization of $q$ and $S$, we consider
\begin{align}
	  Q_N^x & := \sum_{0 \leqslant i < N} q_i  \mathbf{1}_{\{S_i = x\}},
	  	\quad\text{and}\label{eq:Q}\\
	  H_N & := \sum_{x \in \mathbf{Z}^d} \left( Q_N^x \right)^2.
	  	\label{eq:HN}
\end{align}
We think of the $q_i$'s as {\em charges},
$Q_N^x$ as the {\em total charge} at position $x \in
\mathbf{Z}^d$, and $H_N$ as
the {\em energy} of the polymer. In this way, we see that $Q^x_N$
and $H_N$ in fact define functions of the trajectory $S$ of the walk.
Therefore, we might occasionally refer to them respectively
as $Q^x_N(S)$ and $H_N(S)$, as well.

For all $\beta\in\mathbf{R}$ [{\em inverse temperature}]
and $N\geqslant 1$ [the number of monomers] consider the quenched
probability measure $\mathrm{P}_N^{\beta}$,
\begin{equation}
	\mathrm{P}_N^{\beta} (A)  :=  \frac{1}{Z_N (\beta)} \mathrm{E} \left[
	\left. \mathbf{1}_A \exp \left( \frac{\beta}{N} H_N \right) \right| q_0,
	q_1, \ldots, q_{N - 1} \right],
\end{equation}
where $Z_N(\beta)$ [the {\em partition function}] is defined so that
$\mathrm{P}_N^\beta$ is indeed a probability measure; that is,
\begin{equation}
	Z_N (\beta)  :=  \mathrm{E} \left[ \left. \exp \left( \frac{\beta}{N} H_N
	\right) \right| q_0, q_1, \ldots, q_{N - 1} \right] .
\end{equation}

We can write the energy in the following equivalent form:
\begin{equation}
	H_N := 2 \hat{H}_N + \sum_{0 \leqslant i < N} q_i^2,
\end{equation}
where
\begin{equation}
	\hat{H}_N  := \mathop{\sum\sum}\limits_{
	0 \leqslant i < j < N} q_i q_j  \mathbf{1}_{\{S_i =
	S_j \}}.\label{eq:Hhat}
\end{equation}
Therefore, if we define
the quenched measure $\widehat{\mathrm{P}}_N^{\beta}$ as
we did $\mathrm{P}_N^\beta$ but with
$\hat{H}_N$ in place of $H_N$, then
$\widehat{\mathrm{P}}_N^\beta=\mathrm{P}_N^{\beta / 2}$.
Thus, the analyses of $\mathrm{P}_N^\beta$ and 
$\widehat{\mathrm{P}}_N^\beta$ are the same, but one has to remember
to halve/double the parameter $\beta$ in order to understand one
in terms of the other.

In our model, like charges attract when $\beta > 0$ .
This accounts for
the hydrophobic properties of monomers immersed in water {\cite{GO88b}}. 
And the scaling $H_N / N$ was introduced also in \cite{GO88b} in
order to compensate for the absence of hard-core repulsion.
It will also follow from Lemma \ref{lem:subadd} below that
 this scaling makes the energy subadditive [or {\em extensive}].
 The fact that charges interact only when they are at exactly the 
 same position is said to account for the {\tmem{screening effect}}:
 When a polymer is immersed in water, its charges are
 surrounded by oppositely-charged free molecules of the solvent. 
 
Garel and Orland {\cite{GO88a,GO88b}} introduced the 
charged-polymer model
in order to better understand the transition, in biopolymers, 
from a swollen state to a folded state.
In {\cite{GO88a}} the authors perform a mean-field analysis of a 
model with independent, Gaussian
interactions between monomers pairs. And in {\cite{GO88b}} they
introduce [a generalization of] $\hat{H}_N$ in order to model different
possible attractive/repulsive forces between different monomers such 
as amino acids in proteins or the base-pairs in the 
RNA.\footnote{The energy in {\cite{GO88b}} corresponds to ours 
when their $M = 1$.}

When the reference random
walk $\{S_i\}_{i=0}^\infty$ is replaced by a walk on a simplex with $d$ points,
Garel and Orland \cite{GO88b}
find a continuous phase transition from a folded to an unfolded state as
the temperature increases. And, for a continuous version of the charged-polymer
model, they find that a similar continuous phase transition holds at an 
explicit temperature. In another
paper {\cite{GO88a}}, however, Garel and Orland mention
that the phase transition in
biopolymers is expected to be discontinuous. Among other things,
the results of our paper confirm their prediction in the present charged-polymer model.

The physics literature contains also the analyses of several 
seemingly-similar models that are not
equivalent to ours mainly because in those models 
like charges repel \cite{KK91,DGH92,DH94,GK97}.

In the last few years the mathematics of polymer measures
has also grown considerably \cite{vdHK01,CSY04,Gia07,dO09}.
However, it appears that little is
known about our model. We are aware only of Chapter 8 of \cite{dO09}
on the annealed measure in the repelling regime $\beta \leqslant 0$, and that 
result holds for a different scaling of the energy 
[for which the polymer is completely localized.]

We are aware also of some recent works on the energy $\hat{H}_N$
itself:  In \cite{C08}, limit theorems for $\hat{H}_N$ are established;
it was shown in \cite{CK09} that the distribution of
$\hat{H}_N$ is comparable to the random walk in random scenery
as $N$ tends to infinity, see also \cite{HK10}; and large deviations for
$\hat{H}_N$ were established in \cite{A08,A09}.

Let us conclude this introduction with a brief outline
of the paper: In the remainder of this section we
present our main results on the model.
Those results range from a characterization of the
delocalized phase to a description of the discontinuous phase transition,
and finally to large-$\beta$ asymptotics. We also emphasize some differences
between the quenched and annealed measures, and describe the effect of a
pulling force. Proofs of the various assertions are relegated to
Section \ref{sec:proofs}. Finally, we include some
basic facts about the local times of the simple random walk
in the appendix.

\subsection{The delocalized phase}

Unless it is stated to the contrary,
we assume that $\mathrm{E} q_0 = 0$, $\tmop{Var} q_0
= 1$, and that the charges are \emph{subgaussian};
that is, $\kappa<\infty$, where
\begin{equation}
	\kappa := \inf\left\{ c\in(-\infty\,,\infty]:\
	\mathrm{E} {\rm e}^{tq_0}  \leqslant  
	{\rm e}^{c t^2/2} \quad\text{for all }\quad t \in \mathbf{R}
	\right\}.
  	\label{eq:kappa}
\end{equation}
We have $\kappa\geqslant 1$ as long as $q_0$ has a finite moment
generating function near zero and $\mathrm{E}q_0=0$. 
And  $\kappa = 1$ both when the $q_i$'s have the
Rademacher distribution
[$\mathrm{P} \{q_0 = \pm 1\} = 1 / 2$] and when they have a
standard normal distribution.

Now we introduce
\begin{equation}
	\mathscr{D} := \left\{ \beta \in \mathbf{R} :\ Z_N (\beta) 
	\overset{\mathrm{P}}{\longrightarrow}{\rm e}^{\beta}
	\quad\text{as $N\to\infty$}
	\right\},
\end{equation}
where ``$\overset{\mathrm{P}}{\longrightarrow}$'' denotes convergence in
probability. As is customary,
we call
\begin{equation}\label{eq:LTx}
	L_N^x := \sum_{i = 0}^{N - 1} \tmmathbf{1}_{\{S_i = x\}}
\end{equation}
the \emph{local time} of $\{S_i \}_{i = 0}^{N - 1}$ at $x$, and 
define
\begin{equation}\label{eq:LTstar}
	L_N^{\star} := \max_{x \in \mathbf{Z}^d} L_N^x
\end{equation}
to be maximum local time.

The next theorem tells us that the set
$\mathscr{D}$ characterizes the region of $\beta$ for which the trajectory of
the polymer is [asymptotically] indistinguishable from that of a random walk.
In other words, the polymer is {\tmem{delocalized}}
when $\beta\in\mathscr{D}$ and $N$ is large.

\begin{theorem}\label{thm:D}
	If $\mathrm{E} q_0 = 0$, 
	$\tmop{Var} q_0 = 1$, and $\kappa < \infty$, then:
	\begin{enumerate}
		\item $\mathscr{D}$ is an interval that contains $(- \infty\,, 1 / \kappa)$.    
		\item $\beta \in \mathscr{D}$ if and only if
			 for all $\varepsilon > 0$,
			\begin{equation}
				\mathrm{P}_N^{\beta} \{L^{\star}_N \leqslant
				\varepsilon N\}\overset{\mathrm{P}}{\longrightarrow} 1
				\qquad\text{as $N\to\infty$.}
			\end{equation}
		\item $\beta \in \mathscr{D}$ if and only if:
		\begin{equation}
     			\left\| \mathrm{P}_N^{\beta}  - \mathrm{P} \left[\,\cdot \left|q_0\,, \ldots, q_{N - 1}\right. \right] \right\|_{\tmop{TV}}
			\overset{\mathrm{P}}{\longrightarrow}  0 
				\qquad\text{as $N\to\infty$,}
    		\end{equation}
		where $\left\| \mu -\nu \right\|_{\tmop{TV}}:=\sup_A |\mu(A)-\nu(A)|$ is the total variation distance. 
 	 \end{enumerate}
\end{theorem}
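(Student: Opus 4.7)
The plan proceeds in three steps matching the three items of the theorem. The key reduction throughout is the identity $H_N=2\hat H_N+\sum_{i<N}q_i^2$ of the text, which together with the strong law $(1/N)\sum q_i^2\to 1$ yields $Z_N(\beta)=\exp((\beta/N)\sum q_i^2)\cdot\widehat Z_N(2\beta)$ for $\widehat Z_N(\gamma):=\mathrm{E}[\exp(\gamma\hat H_N/N)\mid q]$, so that $\beta\in\mathscr D$ is equivalent to $\widehat Z_N(2\beta)\to 1$ in probability. I would work with this reduced object in all three parts.

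For part (1), the interval property rests on convexity of $\beta\mapsto\log Z_N(\beta)$ together with the universal Jensen lower bound $\log Z_N(\beta)\geq(\beta/N)\mathrm{E}[H_N\mid q]=\beta+o_P(1)$ (the correction is $(2\beta/N)\sum_{i<j}q_iq_jp_{j-i}(0)$, whose variance $(4\beta^2/N^2)\sum_n(N-n)p_n(0)^2$ vanishes in every dimension). These reduce membership in $\mathscr D$ to $\limsup_N\log Z_N(\beta)\leq\beta$ in probability, a condition preserved under convex combination. For the inclusion $(-\infty,1/\kappa)\subseteq\mathscr D$, I would prove the matching mean bound $\mathrm{E}\widehat Z_N(\gamma)\to 1$ for $|\gamma|<2/\kappa$; together with the pointwise lower bound $\widehat Z_N(\gamma)\geq 1+o_P(1)$ and the soft principle that $X\geq 0$, $X\geq 1-o_P(1)$ and $\mathrm{E} X\to 1$ jointly force $X\to 1$ in probability, this yields the claim. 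To bound the mean, condition on $S$: $\hat H_N=\sum_x\hat H_N^{(x)}$ with $\hat H_N^{(x)}=\sum_{i<j,\,S_i=S_j=x}q_iq_j$ is a sum of conditionally independent quadratic forms across sites, and each is estimated via the Gaussian decoupling identity $e^{aZ^2}=\mathrm{E}_g[e^{\sqrt{2a}gZ}]$ together with the subgaussian hypothesis $\mathrm{E}[e^{tq_0}]\leq e^{\kappa t^2/2}$. The range $|\gamma|<2/\kappa$ is what keeps the resulting chi-square type bound meaningful, and the cancellation built into the identity $\hat H_N=(H_N-\sum q_i^2)/2$ together with $\sum_x L_N^x=N$ collapses the product to $\exp(O(\gamma^2 L_N^*/N))=1+o(1)$ on the typical event $\{L_N^*\leq\delta N\}$; the atypical event $\{L_N^*\geq\delta N\}$ is absorbed by the standard simple random walk large-deviation bound $\mathrm{P}(L_N^*\geq\delta N)\leq e^{-c(\delta)N}$ against a crude global MGF bound.

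For parts (2) and (3), the density $e^{\beta H_N/N}/Z_N(\beta)$ of $\mathrm{P}_N^\beta$ with respect to $\mathrm{P}[\,\cdot\mid q]$ provides the main handle. In (2), the forward direction applies Cauchy--Schwarz to the numerator
\[
\mathrm{E}[\mathbf{1}_{L_N^*>\varepsilon N}\,e^{\beta H_N/N}\mid q]\leq\sqrt{\mathrm{P}(L_N^*>\varepsilon N)}\sqrt{Z_N(2\beta)};
\]
the first factor is exponentially small and the second is bounded in probability (by either enlarging the interval $\mathscr D$ from part~(1), or by a direct truncation for $\beta$ close to the endpoint $1/\kappa$), and division by $Z_N(\beta)\to e^\beta$ yields $\mathrm{P}_N^\beta\{L_N^*>\varepsilon N\}\to 0$. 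The reverse direction applies the subgaussian product bound pathwise on $\{L_N^*\leq\varepsilon N\}$ — which by hypothesis has $\mathrm{P}_N^\beta$-probability tending to one — to upgrade the Jensen lower bound to the full convergence of $\log Z_N(\beta)$. In (3), the identity
\[
\|\mathrm{P}_N^\beta-\mathrm{P}[\,\cdot\mid q]\|_{\mathrm{TV}}=\frac{1}{2Z_N(\beta)}\mathrm{E}[|e^{\beta H_N/N}-Z_N(\beta)|\mid q]
\]
reduces the forward direction to $L^1$-convergence $e^{\beta H_N/N}\to e^\beta$ under $\mathrm{P}[\,\cdot\mid q]$, which follows from convergence in probability together with uniform integrability extracted from the bound in probability $\mathrm{E}[e^{2\beta H_N/N}\mid q]=Z_N(2\beta)$; the reverse direction specializes TV-convergence to the indicator of $\{L_N^*>\varepsilon N\}$ and invokes (2).

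The technical heart is the upper bound $\mathrm{E}\widehat Z_N(\gamma)\to 1$ in part (1) at the sharp threshold $|\gamma|<2/\kappa$: applying the subgaussian chi-square bound naively to $H_N$ loses a factor of $\kappa$ and gives only $\exp(\beta\kappa+o(1))$, so recovering the correct threshold demands both the cancellation against $\sum q_i^2$ hidden in the definition of $\hat H_N$ and careful large-deviation control of the atypical walks with $L_N^*\gtrsim N$. Once this estimate is in place, the remaining steps are essentially soft consequences of convexity, Jensen's inequality, Cauchy--Schwarz, and uniform integrability.
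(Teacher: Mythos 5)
Your overall skeleton matches the paper — characterize $\mathscr D$ via a per-site Gaussian-decoupling estimate, use Jensen for the lower bound, convexity for the interval property, and relate $\mathscr D$ to local-time control for parts (2) and (3). The $\hat H_N$ reparameterization and the ``soft principle'' ($X\geq 0$, $X\geq 1-o_P(1)$, $\mathrm E X\to 1\Rightarrow X\to 1$ in probability) in place of the paper's $L^2$ computation are nice variants.

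However, there is a genuine gap in your treatment of parts (2) and (3): you control the error terms using the \emph{full} $Z_N(2\beta)$ via Cauchy--Schwarz (resp.\ uniform integrability), and you justify boundedness of $Z_N(2\beta)$ ``by enlarging the interval $\mathscr D$ from part (1), or by a direct truncation for $\beta$ close to the endpoint $1/\kappa$.'' This misses that $\mathscr D$ can be strictly larger than $(-\infty,1/\kappa)$; indeed $\beta_c\geq 1/\kappa$ in general, and the theorem must hold for \emph{all} $\beta\in\mathscr D$, including $\beta$ arbitrarily close to (or equal to) $\beta_c$. For such $\beta$ one has $2\beta>\beta_c$, so $\digamma(2\beta)>0$ and $Z_N(2\beta)$ grows like $e^{N\digamma(2\beta)(1+o(1))}$; your bound
\begin{equation*}
\mathrm P_N^\beta\{L_N^\star>\varepsilon N\}\leq \frac{\sqrt{\mathrm P\{L_N^\star>\varepsilon N\}}\,\sqrt{Z_N(2\beta)}}{Z_N(\beta)} \asymp \exp\Bigl(\tfrac N2\bigl(\digamma(2\beta)-I(\varepsilon)\bigr)\Bigr)
\end{equation*}
then diverges whenever $\digamma(2\beta)>I(\varepsilon)$, which occurs for all sufficiently small $\varepsilon$. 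The same defect undermines the ``uniform integrability from $Z_N(2\beta)$'' step in part (3). The paper avoids this by working throughout with the \emph{truncated} partition function $Z_N^\varepsilon(\beta)=\mathrm E[\mathbf 1_{\{L_N^\star\leq\varepsilon N\}}\exp(\beta H_N/N)\mid q]$, which — and this is the key point — is controlled for \emph{every} $\beta$ provided $\varepsilon<1/(2\kappa\beta)$ (Proposition 2.1 and Lemma 2.4 there). Part (2) is then the identity $\mathrm P_N^\beta\{L_N^\star\leq\varepsilon N\}=Z_N^\varepsilon(\beta)/Z_N(\beta)$, and part (3) uses Cauchy--Schwarz against $Z_N^\varepsilon(2\beta)$, not $Z_N(2\beta)$. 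Without introducing $Z_N^\varepsilon$ as the central object, the argument cannot cover the full range $\beta\in\mathscr D$.

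A secondary issue concerns the tail bound in your part (1). You propose to handle the atypical event $\{L_N^\star\geq\delta N\}$ by a ``crude global MGF bound,'' but as $\gamma\uparrow 2/\kappa$ the natural crude bound $\mathrm E[e^{\gamma\sum q_i^2/4}]\leq(1-\kappa\gamma/2)^{-N/2}$ blows up faster than any fixed large-deviation rate $c(\delta)$ can compensate; the split may fail near the endpoint. The paper sidesteps this entirely: since $L_N^\star\leq(N+1)/2$ deterministically, taking $\varepsilon=1/2+\delta$ renders the truncation vacuous, so the per-site estimate (Lemma 2.2 there) applies to every site without any typical/atypical dichotomy. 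You should adopt the same device.
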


In order to describe a consequence of Theorem 
\ref{thm:D}, let $N \geqslant 1$ be an integer,
and consider the stochastic process
$\mathscr{S}_N$ defined by
\begin{equation}
	\mathscr{S}_N (t) := (Nt - [Nt]) \left( \frac{S_{[Nt] + 1} -
	S_{[Nt]}}{\sqrt{N}} \right) + \frac{S_{[Nt]}}{\sqrt{N}} 
	\qquad(0\leqslant t\leqslant 1).
\end{equation}
$\mathscr{S}_N$ is defined uniquely as the piecewise-linear
function that takes the values $S_k / \sqrt{N}$ at $t=k/N$ for
all integers $k = 0, \ldots, N$. Now we can mention the
consequence of Theorem \ref{thm:D}.

\begin{corollary}\label{cor:BM}
	If  $\mathrm{E} q_0 = 0$, $\tmop{Var} q_0 = 1$, and $\kappa
	< \infty$, then for all $\beta \in \mathscr{D}$ and 
	$\Phi : C ([0\,, 1]) \to \mathbf{R}$ bounded and continuous,
	\begin{eqnarray}
		\mathrm{E}^{\beta}_N \left[ \Phi \left( \mathscr{S}_N \right) \right] &
		\underset{N \rightarrow \infty}{\overset{\mathrm{P}}{\longrightarrow}} &
		\mathrm{E} \left[ \Phi \left( \mathscr{B} \right) \right], 
	\end{eqnarray}
	where $\mathscr{B}$ denotes $d$-dimensional Brownian motion.
\end{corollary}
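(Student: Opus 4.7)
The proof is really a direct consequence of Theorem \ref{thm:D}(3) combined with Donsker's invariance principle. The plan is to use the total-variation closeness of $\mathrm{P}^\beta_N$ to the reference (conditioned) law to reduce the question to one about the un-weighted simple random walk, where Donsker's theorem directly gives Brownian scaling.

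In more detail, fix a bounded continuous $\Phi : C([0,1]) \to \mathbf{R}$ and set $M := \|\Phi\|_\infty$. I would decompose
\begin{equation*}
	\mathrm{E}^{\beta}_N[\Phi(\mathscr{S}_N)]
	= \Big(\mathrm{E}^{\beta}_N[\Phi(\mathscr{S}_N)]
		- \mathrm{E}[\Phi(\mathscr{S}_N)\mid q_0,\ldots,q_{N-1}]\Big)
		+ \mathrm{E}[\Phi(\mathscr{S}_N)\mid q_0,\ldots,q_{N-1}].
\end{equation*}
Since $\mathscr{S}_N$ is a measurable function of the walk $S$, the first bracket is bounded in absolute value by $2M\,\|\mathrm{P}^{\beta}_N - \mathrm{P}[\,\cdot\mid q_0,\ldots,q_{N-1}]\|_{\mathrm{TV}}$, which tends to $0$ in probability by Theorem \ref{thm:D}(3), given that $\beta\in\mathscr{D}$. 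For the second term I would use the independence of $\{S_i\}$ and $\{q_i\}$ under $\mathrm{P}$: the conditional expectation reduces to the deterministic quantity $\mathrm{E}[\Phi(\mathscr{S}_N)]$, which by Donsker's invariance principle for the simple random walk on $\mathbf{Z}^d$ converges to $\mathrm{E}[\Phi(\mathscr{B})]$ as $N\to\infty$. Adding the two contributions and combining convergence in probability with a deterministic limit yields the stated convergence in probability.

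There is no real obstacle in this corollary; the substantial work has already gone into establishing part (3) of Theorem \ref{thm:D}. The only minor point to verify is that $\mathscr{S}_N$ is a function of $S$ alone (so that the reference measure conditioned on the charges is genuinely the law of Donsker's polygonal interpolation), which is immediate from the definition of $\mathscr{S}_N$. The scaling convention for $\mathscr{B}$ is whatever is inherited from Donsker's theorem for the simple random walk on $\mathbf{Z}^d$ (covariance $(t/d)I_d$), matching the statement's use of ``$d$-dimensional Brownian motion.''
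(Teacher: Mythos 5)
Your proof is correct and takes essentially the same route as the paper: bound $|\mathrm{E}^\beta_N[\Phi(\mathscr{S}_N)]-\mathrm{E}[\Phi(\mathscr{S}_N)]|$ via the total-variation estimate of Theorem \ref{thm:D}(3), then invoke Donsker's theorem for the unweighted walk. Your additional remarks (that $\mathscr{S}_N$ depends only on $S$, and that the conditional law collapses to the unconditional one by independence of $S$ and $q$) simply make explicit two small points the paper leaves implicit.
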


\begin{remark}
	Even though $\beta\in\mathscr{D}$ if and only if
	$\mathrm{P}_N^{\beta} \{L^{\star}_N <
	\varepsilon N\} \rightarrow 1$ in probability,  one can say more about
	the rate of this convergence when
	$\beta$ in the interior of $\mathscr{D}$. Indeed, suppose $\beta$
	lies in the interior of $\mathscr{D}$. It follows from part 1 of
	Theorem \ref{thm:D} that $q\beta\in\mathscr{D}$ for some $q>1$.
	Let $p$ denote the conjugate to $q$; that is, $p^{-1}+q^{-1}=1$. Then
	H\"older's inequality implies that
	\begin{equation}
		\mathrm{P}_N^{\beta} \left\{
		L^{\star}_N \geqslant \varepsilon N\right\} \leqslant
		\left[ \mathrm{P} \left\{
		L^{\star}_N \geqslant \varepsilon N\right\}\right]^{1 / p}
		\cdot \frac{\left[Z_N (q
		\beta)\right]^{1 / q}}{Z_N (\beta)}.
	\end{equation}
	The fraction of the $Z_N$'s goes to one in probability since both
	$\beta$ and $q\beta$ are in $\mathscr{D}$. Therefore,
	it follows from Lemma \ref{lem:LTstar} below 
	that $\mathrm{P}_N^\beta\{L^{\star}_N
	< \varepsilon N\}\to 1$, in probability, exponentially fast,
	as long as $\beta$ lies in the interior of $\mathscr{D}$.\qed
\end{remark}

\subsection{A first-order phase transition}

We show, in Lemma \ref{lem:subadd} below,
that the normalized energy $H_N / N$ is
subadditive. And it will follow from that fact
that the {\em free energy} $\digamma$ exists when the second moment of
the charge distribution is finite. More precisely, we have the following.

\begin{proposition}\label{prop:F}
	If $\mathrm{E} (q_0^2) < \infty$, then for all $\beta \in\mathbf{R}$,
	\begin{eqnarray}\label{eq:F}
		\digamma (\beta) := \lim_{N\to\infty} 
		\frac{1}{N} \ln Z_N \left( \beta \right)		
	\end{eqnarray}
	exists a.s.\ and in $L^1 ( \mathrm{P})$, and 
	$\digamma(\beta)$ is nonrandom. The function
	$\mathbf{R} \ni \beta \rightarrow \digamma (\beta)$ is nonnegative,
	nondecreasing, and convex with $\digamma (0) = 0$.
\end{proposition}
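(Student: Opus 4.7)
The plan is to derive the existence of $\digamma(\beta)$ from Kingman's subadditive ergodic theorem, using the shift on the i.i.d.\ charge sequence $\{q_i\}$ as the underlying ergodic dynamics. Lemma~\ref{lem:subadd} provides the pathwise subadditivity of $H_N/N$: decomposing the walk at time $N$ by writing $S_{N+j}=S_N+S'_j$ for an independent simple random walk $S'$, and letting $H'_M$ denote the energy built from $S'$ with the shifted charges $q_N,\ldots,q_{N+M-1}$, one should obtain
\[
\frac{H_{N+M}}{N+M}\;\leqslant\;\frac{H_N}{N}+\frac{H'_M}{M}
\]
a.s.\ (Cauchy--Schwarz on $H_{N+M}=\sum_x(Q_N^x+{Q'}_M^{x-S_N})^2$ gives $\sqrt{H_{N+M}}\leqslant\sqrt{H_N}+\sqrt{H'_M}$, and weighted AM--GM with ratio $N:M$ then upgrades this to the stated inequality). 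For $\beta\geqslant 0$, monotonicity of $t\mapsto e^{\beta t}$, the Markov property of the walk at time $N$, and the independence of the charges from $S$ together yield
\[
Z_{N+M}(\beta)\;\leqslant\;Z_N(\beta)\cdot\widetilde Z_M(\beta),
\]
where $\widetilde Z_M(\beta)$ is the partition function built from the shifted charges; taking logs shows that $\ln Z_N(\beta)$ is subadditive in the Kingman sense. For $\beta\leqslant 0$ the inequality on $e^{\beta t}$ reverses and $\ln Z_N(\beta)$ is superadditive instead.

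To invoke Kingman I need a uniform $L^1$ bound on $N^{-1}\ln Z_N(\beta)$. Cauchy--Schwarz gives $(Q_N^x)^2\leqslant L_N^x\sum_{i<N}q_i^2\mathbf{1}_{\{S_i=x\}}$, so that
\[
0\;\leqslant\;\frac{H_N}{N}\;\leqslant\;\frac{L_N^\star}{N}\sum_{i<N}q_i^2\;\leqslant\;\sum_{i<N}q_i^2.
\]
This yields the upper estimate $\ln Z_N(\beta)\leqslant|\beta|\sum_{i<N}q_i^2$, and Jensen applied to $Z_N(\beta)$ supplies the matching lower bound $\ln Z_N(\beta)\geqslant\beta\,\mathrm{E}[H_N\mid q]/N\geqslant-|\beta|\sum_{i<N}q_i^2$. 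Hence $\mathrm{E}|\ln Z_N(\beta)|/N\leqslant|\beta|\,\mathrm{E}(q_0^2)<\infty$, and Kingman's theorem delivers the a.s.\ and $L^1$ limit $\digamma(\beta)$. Ergodicity of the i.i.d.\ sequence forces this shift-invariant limit to be a.s.\ constant.

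For the remaining properties: $\digamma(0)=0$ is immediate from $Z_N(0)=1$; monotonicity follows from $H_N\geqslant 0$, which makes $\beta\mapsto Z_N(\beta)$ nondecreasing; and convexity of $\beta\mapsto\ln Z_N(\beta)$ is the standard H\"older-inequality fact for log-moment-generating functions, inherited by $\digamma$. Nonnegativity for $\beta\geqslant 0$ is immediate from $Z_N(\beta)\geqslant 1$. The step I expect to require the most care is nonnegativity on $(-\infty,0]$, since monotonicity alone only yields $\digamma(\beta)\leqslant 0$ there. Jensen gives $N^{-1}\mathrm{E}\ln Z_N(\beta)\geqslant\beta\,\mathrm{E}(H_N)/N^2$, and a direct computation produces
\[
\mathrm{E}(H_N)=N\,\mathrm{E}(q_0^2)+(\mathrm{E} q_0)^2\sum_{0\leqslant i\neq j<N}\mathrm{P}\{S_i=S_j\},
\]
in which the double sum is of order $N^{3/2}$, $N\log N$, or $N$ in dimensions $1$, $2$, and $\geqslant 3$ respectively, using standard bounds on $\mathrm{P}\{S_n=0\}$. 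In every case $\mathrm{E}(H_N)/N^2\to 0$, so $\digamma(\beta)\geqslant 0$ on $(-\infty,0]$ (and in fact $\digamma\equiv 0$ there), completing the proof.
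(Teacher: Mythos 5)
Your proof is correct and follows essentially the same route as the paper: Lemma~\ref{lem:subadd} gives pathwise subadditivity of $H_N/N$, which yields sub/superadditivity of $\ln Z_N(\beta)$ according to the sign of $\beta$, Kingman's theorem then delivers the a.s.\ and $L^1$ limit (nonrandom by ergodicity of the i.i.d.\ charge sequence), and the remaining properties are derived by the same direct arguments. Your explicit treatment of $\beta<0$ via superadditivity is a small gain in completeness over the paper's presentation, which tacitly restricts the displayed subadditivity inequality to $\beta\geqslant 0$; and your direct computation of $\mathrm{E}(H_N)$ via $\sum_{i\neq j}\mathrm{P}\{S_i=S_j\}$ is an equally valid alternative to the paper's appeal to Lemma~\ref{lem:moments}.
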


Define
the \emph{critical inverse temperature},
\begin{equation}\label{eq:bc}
	\beta_c := \sup \mathscr{D}.  
\end{equation}
Clearly, $\digamma(\beta)=0$ whenever $\beta \leqslant \beta_c$.
We now wish to know whether or not the converse is true. 

Our next theorem shows that a first-order phase transition
occurs at $\beta_c$, and that the maximal fraction $L^{\star}_N / N$ 
of monomers on a single site jumps discontinuously from 
$0$ to a quantity that is at least $1 / (2\kappa \beta_c)>0$.
It might help to recall that convex functions have right derivatives
everywhere.

\begin{theorem}\label{thm:fo}
	If $\mathrm{E} q_0 = 0$,  $\tmop{Var} q_0 = 1$, and $\kappa
	< \infty$, then $\digamma(\beta_c)=0$,
	whereas $\digamma(\beta)>0$ for all $\beta>\beta_c$.
	Moreover, there is a first-order phase transition at $\beta_c$; i.e.,
	\begin{equation}\label{eq:fo}
		\lim_{\beta\downarrow\beta_c}
		\frac{\digamma (\beta)}{\beta-\beta_c} \in(0\,,\infty).
	\end{equation}
	Furthermore, if $\beta>\beta_c$, then for all $\varepsilon>0$,
	\begin{equation}\label{eq:prop:min}
		\mathrm{P}_N^{\beta} \left\{ \frac{L^{\star}_N}{N}
		\geqslant \frac{1-\varepsilon}{\beta} \max\left(
		\digamma (\beta) \,, \frac{1}{2\kappa}\right)\right\}  
		\overset{\mathrm{P}}{\longrightarrow} 1
		\quad\text{as $N\to\infty$}.
	\end{equation}
\end{theorem}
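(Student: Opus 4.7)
The plan is to establish the three claims in the order stated; the main obstacle will be proving strict positivity of $\digamma$ above $\beta_c$. The identity $\digamma(\beta_c)=0$ is essentially soft: since $\mathscr{D}$ is an interval containing $(-\infty,1/\kappa)$ with supremum $\beta_c$ (Theorem~\ref{thm:D}(1)), every $\beta<\beta_c$ lies in $\mathscr{D}$ and hence $Z_N(\beta)\to e^\beta$ in probability, forcing $\digamma\equiv 0$ on $(-\infty,\beta_c)$. Being convex and $\mathbf R$-valued (Proposition~\ref{prop:F}), $\digamma$ is continuous, so $\digamma(\beta_c)=0$.

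For the localization bound \eqref{eq:prop:min} I would write
\[
\mathrm P_N^\beta\{L_N^\star\leq\varepsilon N\}=\frac{\mathrm E[e^{(\beta/N)H_N}\mathbf 1_{\{L_N^\star\leq\varepsilon N\}}\mid q]}{Z_N(\beta)},
\]
lower-bound the denominator by $\exp(N(\digamma(\beta)-\eta))$ with high $q$-probability (Proposition~\ref{prop:F}), and give two complementary bounds on the numerator. The first is Cauchy--Schwarz: $(Q_N^x)^2\leq L_N^x\sum_{i:\,S_i=x}q_i^2$ implies $H_N\leq L_N^\star\sum_i q_i^2$, so on $\{L_N^\star\leq\varepsilon N\}$ the numerator is at most $\exp(\beta\varepsilon N(1+o(1)))$ by the law of large numbers applied to $q_i^2$, and the ratio vanishes exponentially when $\varepsilon<\digamma(\beta)/\beta$. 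The second exploits the subgaussian tilt: conditionally on $S$ the variables $\{Q_N^x\}_x$ are independent (disjoint index sets) with $Q_N^x$ subgaussian of parameter $\kappa L_N^x$, so $\mathrm E_q[e^{(\beta/N)H_N}\mid S]\leq\prod_x(1-2\kappa\beta L_N^x/N)^{-1/2}$. Restricted to $\{L_N^\star\leq\varepsilon N\}$ with $\varepsilon<1/(2\kappa\beta)$, the identity $\sum_x L_N^x=N$ combined with $-\ln(1-y)\leq y/(1-y)$ forces this product to be uniformly bounded, so Markov's inequality gives numerator $\leq e^{\eta N}$ in $q$-probability. Dividing by the denominator yields $\exp(N(2\eta-\digamma(\beta)))\to 0$, provided $\digamma(\beta)>0$, which supplies the $1/(2\kappa\beta)$ threshold in \eqref{eq:prop:min}.

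The main obstacle is therefore proving $\digamma(\beta)>0$ for every $\beta>\beta_c$, since this is exactly what activates the subgaussian threshold above. I would argue by contradiction: if $\digamma(\beta_0)=0$ for some $\beta_0>\beta_c$, convexity and $\digamma\geq 0$ force $\digamma\equiv 0$ on $[\beta_c,\beta_0]$, so at any interior point $\beta_1$ one has $\digamma'(\beta_1)=0$. Since $\beta_1\notin\mathscr{D}$, Theorem~\ref{thm:D}(2) guarantees $\mathrm P_N^{\beta_1}\{L_N^\star\geq\varepsilon_0 N\}\not\to 0$ for some $\varepsilon_0>0$, so the tilted measure places nontrivial mass on walks for which $L_N^\star/N$ is comparable to the subgaussian saturation $1/(2\kappa\beta_1)$. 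A quantitative bookkeeping of the contribution to $Z_N(\beta_1)$ from this regime, using that $(Q_N^{x^\star})^2$ under the tilt inflates on the scale $L_N^{x^\star}/(1-2\kappa\beta_1 L_N^{x^\star}/N)$ near saturation, should produce exponential growth of $Z_N(\beta_1)$, contradicting $\digamma(\beta_1)=0$. Once strict positivity is known above $\beta_c$, the first-order transition follows from convexity: the upper bound $\digamma'(\beta_c+)<\infty$ is automatic from convexity and finiteness of $\digamma$ at any fixed $\beta_0>\beta_c$, while the lower bound $\digamma'(\beta_c+)>0$ is obtained by letting $\beta\downarrow\beta_c$ in a linear inequality $\digamma(\beta)\gtrsim(\beta-\beta_c)/(2\kappa)$ extracted by integrating $\digamma'(\beta)=\lim_N\mathrm E_N^\beta[H_N/N^2]$ against the localization estimate $L_N^\star/N\gtrsim 1/(2\kappa\beta)$ supplied by \eqref{eq:prop:min}.
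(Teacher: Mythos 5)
The soft parts of your proposal align with the paper: $\digamma(\beta_c)=0$ by convexity and continuity, and the two complementary bounds (Cauchy--Schwarz giving the $\digamma(\beta)/\beta$ threshold, subgaussian tilting giving the $1/(2\kappa\beta)$ threshold) mirror the paper's Lemmas \ref{lem:densF} and \ref{lem:jump} in spirit. But the heart of the theorem, $\digamma(\beta)>0$ for every $\beta>\beta_c$ and the resulting positive slope at $\beta_c$, is left as a gesture rather than a proof, and the gesture points in a direction that does not close.

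The specific gap is the passage from localization of $L_N^\star$ to positivity of the energy $H_N$. You write that one should ``integrate $\digamma'(\beta)=\lim_N\mathrm E_N^\beta[H_N/N^2]$ against the localization estimate $L_N^\star/N\gtrsim 1/(2\kappa\beta)$,'' but $H_N=\sum_x(Q_N^x)^2$ involves the signed charges $Q_N^x$, not the local times: a site $x$ can carry $L_N^x\geqslant\varepsilon N$ monomers while $Q_N^x$ is small because of cancellations, so $L_N^\star$ being macroscopic does \emph{not} by itself force $H_N/N^2$ to be bounded away from zero. The paper fills exactly this hole with Lemma~\ref{lem:LH}: given the charges, the reference measure $\mathrm{P}$ assigns probability $\exp(-NI(\varepsilon)+o(N))$ to $\{L_N^\star\geqslant\varepsilon N\}$ (Lemma~\ref{lem:LTstar}), and since $Z_N(\beta)\geqslant1$, the quenched weight of $\{H_N\leqslant\delta N^2,\;L_N^\star\geqslant\varepsilon N\}$ is at most $\exp(N[\beta\delta-I(\varepsilon)])\to0$ for $\delta<I(\varepsilon)/\beta$. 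This large-deviation input for the local time is what produces the quantitative lower bound $I\bigl(\tfrac{1}{2\kappa\beta}\bigr)/\beta$ on the right derivative, and it has no counterpart in your sketch. Your alternative route --- arguing ``exponential growth of $Z_N(\beta_1)$'' from subgaussian ``saturation'' when $L_N^\star/N\approx 1/(2\kappa\beta_1)$ --- does not work as stated: the subgaussian bound is only an upper bound on moment generating functions, and $L_N^\star/N$ being \emph{close to} the saturation threshold (not equal to it) gives no blow-up, only a weight $\exp(o(N))$.

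There is also a circularity to disentangle: you invoke \eqref{eq:prop:min} to derive the linear lower bound on $\digamma$ near $\beta_c$, but your proof of \eqref{eq:prop:min} itself requires $\digamma(\beta)>0$. The paper avoids this by establishing a self-contained inequality \eqref{eq:lwbFp} for the right derivative in terms of $\gamma(\beta):=\lim_{\varepsilon\downarrow0}\limsup_N\mathrm E[\mathrm P_N^\beta\{L_N^\star\geqslant\varepsilon N\}]$, which is positive as soon as $\beta\notin\mathscr D$ by Theorem~\ref{thm:D}(2); integrating the finite-$N$ derivative $\mathrm E_N^{\beta+\eta}[H_N/N^2]$ in $\eta\in(0,\delta)$ (with Lemma~\ref{lem:LH} converting the $L_N^\star$ event into an $H_N$ lower bound, and monotonicity of $Z_N$ transporting the event from $\beta$ to $\beta+\eta$) gives \eqref{eq:lwbFp} directly, and then $\digamma>0$, $\gamma\equiv1$ above $\beta_c$, and \eqref{eq:prop:min} all follow without circularity. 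To repair your argument you would need to state and prove the analogue of Lemma~\ref{lem:LH}, and to reorganize so that the derivative bound is established before, not after, \eqref{eq:prop:min}.
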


\subsection{The folded phase}

When the inverse temperature $\beta$ is large, 
the polymer measure concentrates
on the configurations with high energy. In dimensions $d\geqslant 2$ we
will compute the [quenched] maximum of $H_N$.
It turns out that that maximum is realized
when the walk is concentrated on four points that define a square. 

Recall that  $a^+ := a\vee 0$ and $a^- := (-a)^+$
for all $a \in \mathbf{R}$. When $Z$ is a random variable
and $\varepsilon\in\{-\,,+\}$
we always write $\mathrm{E}Z^\varepsilon$ as shorthand for
$\mathrm{E}(Z^\varepsilon)$ [and never for
$(\mathrm{E}Z)^\varepsilon$].

\begin{proposition}\label{prop:maxH:bc}
	If $d \geqslant 2$, then for all $\beta\in\mathbf{R}$,
	\begin{equation}
		\liminf_{N\to\infty}\frac1N\ln Z_N(\beta) 
		\geqslant \left[ \frac{(\mathrm{E} q_0^+)^2 +
		(\mathrm{E} q_0^-)^2}{2}\right] \beta - \ln (2 d)
		\qquad\text{a.s.}
	\end{equation}
	Consequently, under the assumptions
	of Theorem \ref{thm:D} [that $\mathrm{E}q_0=0$, $\tmop{Var}q_0=1$,
	and $\kappa<\infty$], the critical inverse temperature satisfies
	\begin{equation}
		\beta_c  \leqslant  \frac{2 \ln (2 d)}{(\mathrm{E} q_0^+)^2 
		+ (\mathrm{E} q_0^-)^2} .
	\end{equation}
\end{proposition}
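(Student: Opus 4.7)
The plan is to exhibit an explicit lower bound on $Z_N(\beta)$ by restricting the conditional expectation defining it to a single, charge-dependent trajectory that concentrates the polymer on the four vertices of a unit square. Since $d \geqslant 2$, I fix the sites $A := 0$, $B := e_1$, $C := e_1 + e_2$, $D := e_2$. The parity observation underlying the construction is that $\{A, C\}$ lies in the even class and $\{B, D\}$ in the odd class of the natural bipartition of $\mathbf{Z}^d$, while all four edges $AB$, $BC$, $CD$, $DA$ are nearest-neighbour. This lets me steer each charge to the right corner of the square: I define, as a function of $q$, the deterministic path
\begin{equation*}
S^*_i = \begin{cases} A & \text{if $i$ is even and $q_i \geqslant 0$}, \\ C & \text{if $i$ is even and $q_i < 0$}, \\ B & \text{if $i$ is odd and $q_i \geqslant 0$}, \\ D & \text{if $i$ is odd and $q_i < 0$}. \end{cases}
\end{equation*}
Because each transition of $S^*$ follows one specific nearest-neighbour edge, and $S$ is independent of $q$, one has $\mathrm{P}(S_0=S^*_0,\ldots,S_{N-1}=S^*_{N-1}\mid q) = (2d)^{-(N-1)}$.

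Retaining only this single trajectory in the expectation defining $Z_N(\beta)$ yields the pointwise bound $Z_N(\beta) \geqslant (2d)^{-(N-1)} \exp\!\left( \tfrac{\beta}{N} H_N(S^*) \right)$. With $S^*$ plugged into \eqref{eq:Q}--\eqref{eq:HN}, only the four square vertices carry charge, and positive versus negative charges are sorted by parity into distinct corners:
\begin{equation*}
Q_N^A = \!\!\!\sum_{\substack{0 \leqslant i < N \\ i\ \text{even}}}\!\!\! q_i^+, \quad Q_N^C = -\!\!\!\sum_{\substack{0 \leqslant i < N \\ i\ \text{even}}}\!\!\! q_i^-, \quad Q_N^B = \!\!\!\sum_{\substack{0 \leqslant i < N \\ i\ \text{odd}}}\!\!\! q_i^+, \quad Q_N^D = -\!\!\!\sum_{\substack{0 \leqslant i < N \\ i\ \text{odd}}}\!\!\! q_i^-.
\end{equation*}
Four applications of the strong law of large numbers then give $H_N(S^*)/N^2 \to \tfrac{1}{2}\bigl[(\mathrm{E} q_0^+)^2 + (\mathrm{E} q_0^-)^2\bigr]$ almost surely; taking $\tfrac{1}{N}\ln$ in the lower bound on $Z_N(\beta)$ and letting $N \to \infty$ proves the first display of the proposition.

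For the upper bound on $\beta_c$: any $\beta \in \mathscr{D}$ forces $Z_N(\beta) \to \mathrm{e}^\beta$ in probability and hence $\tfrac{1}{N} \ln Z_N(\beta) \to 0$ in probability, so any $\beta$ making the right-hand side of the first display strictly positive must lie outside $\mathscr{D}$. Because part~1 of Theorem~\ref{thm:D} guarantees that $\mathscr{D}$ is an interval, this immediately produces $\beta_c \leqslant 2\ln(2d)/[(\mathrm{E}q_0^+)^2 + (\mathrm{E}q_0^-)^2]$. I do not anticipate any substantial obstacle; the only mildly subtle ingredient is the bipartite parity observation that forces the four-point square (rather than a two-point oscillation between adjacent sites), and the rest is a direct combination of a path-restriction lower bound with the strong law of large numbers.
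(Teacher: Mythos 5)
Your approach is essentially the paper's own: the paper's Proposition \ref{prop:M} is proved by exhibiting precisely this sign-and-parity-sorted four-point trajectory, and then Proposition \ref{prop:maxH:bc} follows via the single-path bound \eqref{eq:Z:maxH} and the strong law. You skip the intermediate characterisation of $\max_S H_N$, which is fine here since only the lower bound is used.

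There is one concrete slip in the path-restriction step. The quantity $\mathrm{P}(S_0 = S^*_0, \ldots, S_{N-1} = S^*_{N-1} \mid q)$ equals $(2d)^{-(N-1)}$ only when $S^*_0 = 0$; since $S_0 = 0$ under $\mathrm{P}$, the probability is $0$ whenever $S^*_0 \neq 0$. With your labelling, $S^*_0 = A = 0$ if $q_0 \geqslant 0$ but $S^*_0 = C = e_1 + e_2$ if $q_0 < 0$, so on the event $\{q_0 < 0\}$ (which typically has positive probability) your lower bound on $Z_N(\beta)$ degenerates to $0$ and the almost-sure conclusion fails as written. The repair is one line and is exactly what the paper does: either swap the roles of $A \leftrightarrow C$ (and $B \leftrightarrow D$) when $q_0 < 0$, or, equivalently, replace $S^*$ by the translate $\bar S^*_i := S^*_i - S^*_0$, which is again a nearest-neighbour path, now starting at the origin, and satisfies $H_N(\bar S^*) = H_N(S^*)$ by translation invariance of the energy. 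With that correction the argument is complete and, up to this normalisation, coincides with the paper's.
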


We emphasize that, in the case that $\mathrm{E}|q_0|=\infty$, the
preceding proposition tells us that 
 $\digamma (\beta) = \infty$ a.s.\ for all $\beta > 0$.
That proposition also tells us that
$\beta_c \leqslant 4\ln (2 d)$ when $q_0$ has
 the Rademacher distribution [i.e., $q_0=\pm 1$ with probability $1/2$ each]
 and $\beta_c\leqslant 2\pi\ln(2d)$ when $q_0$ has a standard normal distribution.

In order to prepare for our next results 
we first define the following quantities:
\begin{align}
	\gamma & :=  \min_{\varepsilon\in\{-,+\}} 
		( \mathrm{E} q_0^{\varepsilon})^2;
		\label{eq:gamma}\\
	\lambda & := \min_{\varepsilon, \varepsilon'\in\{-,+\}} \mathrm{E}
		\left[ \min \left( (
		\mathrm{E} q_0^{\varepsilon}) q_0^{\varepsilon}\,, ( \mathrm{E}
		q_1^{\varepsilon'}) q_1^{\varepsilon'} \right)\right];
		\quad\text{and}
		\label{eq:lambda}\\
	\beta_{\alpha} & := \ln (2 d) \cdot \left[ \frac{8}{(1 - \alpha)
		\gamma} \vee \frac{4}{\lambda} \right]
		\qquad(0<\alpha<1).\label{eq:betaa}
\end{align}
We are interested mainly in $\beta_\alpha$
[$\beta_\alpha$ should not be confused with the critical
inverse temperature $\beta_c$.]

It is possible to check that when $q_0$ has a symmetric distribution
[i.e., $q_0$ and $-q_0$ have the same law],
\begin{equation}\begin{split}
	\gamma &=(\mathrm{E}q_0^+)^2=\left(\int_0^\infty
		\mathrm{P}\{q_0>z\}\, {\rm d}z\right)^2,\\
\lambda &=\sqrt\gamma\cdot\mathrm{E}( q_0^+\wedge q_1^+)=
	\sqrt\gamma\cdot
	\int_0^\infty\left( \mathrm{P}\{q_0>z\}\right)^2\,{\rm d}z,\\
\beta_\alpha&=\frac{4\ln(2d)}{\sqrt\gamma}
	\cdot \left[ \frac{2}{(1-\alpha)\sqrt\gamma}
	\vee \frac{1}{\mathrm{E}(q_0^+\wedge q_1^+)}\right].
\end{split}\end{equation}

Thus, for example, $\gamma=1/4$, 
$\lambda=1/8$, and $\beta_\alpha=32\ln(2d)/(1-\alpha)$
when $q_0$ has the Rademacher distribution [$q_0=\pm1$
with probability $1/2$ each].

In addition to the preceding constants, we will need
some notation:
We say that  ``\emph{$U$ is a unit square}'' if we can write 
$U =\{x_1, \ldots, x_4 \}$ as a collection of four points that satisfy
$\|x_2 - x_1 \|=\|x_3 - x_2 \|=\|x_4 - x_3 \|=\|x_1 - x_4 \|= 1$. 

Also, for $0<\alpha<1$ we define the event $\mathcal{S}_\alpha$,
\begin{equation}
	\mathcal{S}_{\alpha}  :=  \left\{ \begin{array}{c}
	\text{There exists a unique unit square } U \subset \mathbf{Z}^d
	\text{ such that}\\\displaystyle
	\sum_{1\leqslant i < N : S_i \nin U} |q_i | \leqslant
	\frac{1-\alpha}{2} \sum_{1\leqslant i < N} |q_i |
	\end{array} \right\} . \label{eq:S}
\end{equation}
In other words, the event $\mathcal{S}_{\alpha}$ is realized 
exactly when there exists a unique unit square $U$ such that
the sum of the absolute charges not on $U$
is at most  $(1-\alpha) / 2$ times the total absolute
charge of the polymer.

\begin{theorem}[The four points]\label{thm:square}
	Assume  $d \geqslant 2$. Then for all $\delta
	> 0$, there is $c_{\delta}\in(0\,,\infty)$ such that
	for every $N \geqslant 1$ and $\beta \in \mathbf{R}$,
	\begin{equation*}
		\mathrm{P} \left\{ \mathrm{P}_N^{\beta}
		\left( \mathcal{S}_{\alpha} \right)
		\geqslant 1 - \exp \left( N \ln \left( 2 d \right)  \left[ 1 -
		\frac{\beta}{(1 + \delta) \beta_{\alpha}} \right]\right) \right\}
		\geqslant  1 - \exp \left( - c_{\delta} N \right) .
		\end{equation*}
\end{theorem}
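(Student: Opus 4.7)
The plan is to control $\mathrm{P}_N^{\beta}(\mathcal{S}_\alpha^c) = \mathrm{E}[\mathbf{1}_{\mathcal{S}_\alpha^c}\,e^{\beta H_N/N}\mid q]/Z_N(\beta)$ by bounding the denominator from below via a single concentrated trajectory and the numerator from above by walk enumeration. The conclusion is non-trivial only when $\beta>(1+\delta)\beta_\alpha$, so I may restrict to that regime throughout.

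For the denominator I adapt the strategy behind Proposition~\ref{prop:maxH:bc}: given $q$, fix a unit square $\{x_1,x_2,x_3,x_4\}\subset\mathbf{Z}^d$ with diagonal pairs $(x_1,x_3)$ and $(x_2,x_4)$, and define the deterministic walk $s^{\ast}=s^{\ast}(q)$ by $S^{\ast}_{2k}:=x_1$ when $q_{2k}>0$ and $x_3$ otherwise, and $S^{\ast}_{2k+1}:=x_2$ when $q_{2k+1}>0$ and $x_4$ otherwise. This is a legitimate nearest-neighbour trajectory, so
\[
Z_N(\beta)\;\geqslant\;(2d)^{-(N-1)}\exp(\beta H_N(s^{\ast})/N),
\]
and Cram\'er's theorem applied to the four i.i.d.\ sums $\sum_{\text{even }i}q_i^{\pm}$ and $\sum_{\text{odd }i}q_i^{\pm}$ yields $H_N(s^{\ast})\geqslant\tfrac{N^2}{2}[(\mathrm{E}q_0^+)^2+(\mathrm{E}q_0^-)^2](1-\delta')\geqslant N^2\gamma(1-\delta')$ outside a $q$-event of probability $\leqslant e^{-c_{\delta'}N}$. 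A companion construction pairing consecutive (even, odd) indices, so that each pair contributes the integrand $\min((\mathrm{E}q^{\varepsilon})q^{\varepsilon},(\mathrm{E}q^{\varepsilon'})q^{\varepsilon'})$ appearing in the definition of $\lambda$, delivers the alternate lower bound $H_N(s^{\ast})\geqslant N^2\lambda(1-\delta')/2$; whichever one binds in $\beta_\alpha$ is the one invoked.

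For the numerator I use the crude enumeration
\[
\mathrm{E}[\mathbf{1}_{\mathcal{S}_\alpha^c}\,e^{\beta H_N/N}\mid q]\;\leqslant\;2d\cdot\max_{s\in\mathcal{S}_\alpha^c}\exp(\beta H_N(s)/N),
\]
from $|\mathcal{S}_\alpha^c|\leqslant(2d)^N$ and $\mathrm{P}(S=s)=(2d)^{-(N-1)}$. Combined with the lower bound on $Z_N$, this reduces the theorem to the quenched \emph{gap inequality}
\[
\max_{s\in\mathcal{S}_\alpha^c}H_N(s)\;\leqslant\;H_N(s^{\ast})\;-\;\frac{N^2}{1+\delta}\Big[\tfrac{(1-\alpha)\gamma}{8}\wedge\tfrac{\lambda}{4}\Big],
\]
which, after using the identity $\ln(2d)/\beta_\alpha=(1-\alpha)\gamma/8\wedge\lambda/4$ and absorbing the $(N-1)\ln(2d)$ entropy factor, matches the stated probability bound.

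The main obstacle is the gap inequality itself, since $H_N(s)$ depends on the walk through the \emph{signed} charges $Q_N^x$ rather than through the absolute-mass profile $W_x:=\sum_{i:\,S_i=x}|q_i|$ alone, and crude estimates such as $H_N\leqslant T\max_x W_x$ are far too weak. For any walk $s$ let $U^{\dagger}(s)$ be a unit square maximising $\sum_{x\in U}W_x$; membership in $\mathcal{S}_\alpha^c$ forces $\sum_{x\in U^{\dagger}}W_x\leqslant(1+\alpha)T/2$, so at least $(1-\alpha)T/2$ of the absolute charge sits off $U^{\dagger}$. I would split $H_N=\sum_{x\in U^{\dagger}}(Q_N^x)^2+\sum_{x\notin U^{\dagger}}(Q_N^x)^2$, bound the inside piece by $(Q_N^x)^2\leqslant W_x^2$ together with the mass constraint on $U^{\dagger}$, and handle the outside piece with a parity argument: two SRW monomers co-locate only when their time indices share parity, so the outside charges can at best be realigned in pairs and contribute the $\mathrm{E}\min$ quantity behind $\lambda$, giving the $\lambda/4$ term. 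The $(1-\alpha)\gamma/8$ term in turn emerges from the worst-case (disjoint-unit-square) redistribution of the $(1-\alpha)T/2$ outside mass, and taking the minimum of the two estimates closes the gap; the uniqueness clause in $\mathcal{S}_\alpha$ is handled analogously by observing that two overlapping ``best'' unit squares force heavy mass on the overlap, a situation already covered by the same decomposition.
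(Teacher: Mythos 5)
The overall scaffolding coincides with the paper's: lower-bound $Z_N(\beta)$ by $(2d)^{-(N-1)}\exp(\beta\max_S H_N/N)$ using the explicit four-point trajectory from Lemma~\ref{lem:HD}, upper-bound the numerator $\mathrm{E}[\mathbf{1}_{\mathcal{S}_\alpha^c}\,e^{\beta H_N/N}\mid q]$ by walk enumeration, and thereby reduce the theorem to a quenched gap inequality. Lemma~\ref{lem:lwbH} is precisely this combination of denominator and numerator estimates, and the paper then produces the gap from the distance-to-optimality $D_N$ of \eqref{eq:DN}, built out of the \emph{signed} sums $Q_\varepsilon^p$ and $Q_N^{x_\varepsilon^p}$ and estimated via Lemmas~\ref{lem:HD} and~\ref{lem-Dxd1}.

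The gap inequality is the crux, and your sketch of it is not salvageable as written. Passing through the absolute-mass profile $W_x = \sum_{i:\,S_i=x}|q_i|$ and the pointwise bound $(Q_N^x)^2\leqslant W_x^2$ discards the sign cancellation, and what is lost is of the same order as the gap one is trying to detect. Concretely, take the trajectory that places all even monomers at the origin and all odd monomers at a neighbouring site $b$. This trajectory lies in $\mathcal{S}_\alpha^c$ (uniqueness fails, as several unit squares contain $\{0,b\}$), and your inside bound gives $W_0^2+W_b^2\approx T^2/2\approx N^2(\mathrm{E}|q_0|)^2/2$; but since $\mathrm{E}q_0=0$ forces $\mathrm{E}q_0^\pm=\mathrm{E}|q_0|/2$, we have $\max_S H_N\approx N^2(\mathrm{E}|q_0|)^2/4$. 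Your upper bound on the inside piece alone is thus roughly \emph{twice} the quantity $\max_S H_N$ it is supposed to lie strictly below, even though the true $H_N$ for this trajectory is only $O(N)$ thanks to cancellation. The paper's $D_N$ never suffers this loss because it compares $\varepsilon Q_N^{x_\varepsilon^p}$ directly to $Q_\varepsilon^p$; tracking $W_x$ alone cannot see the deficit.

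Two secondary points. The ``companion construction'' asserting $H_N(s^{\ast})\geqslant N^2\lambda(1-\delta')/2$ misattributes the role of $\lambda$: in $d\geqslant 2$, $\max_S H_N/N^2$ converges to $\tfrac12[(\mathrm{E}q_0^+)^2+(\mathrm{E}q_0^-)^2]$ (Proposition~\ref{prop:M}), and $\lambda$ enters only through Lemma~\ref{lem-Dxd1} as a floor on $D_N$ when the four optimal sites fail to form a unit square, not as a lower bound on the maximal energy. And the uniqueness clause of $\mathcal{S}_\alpha$ is handled in the paper not by analysing overlapping near-maximal squares but by passing to the explicitly geometric, strictly smaller event $\mathcal{C}_\alpha$ of \eqref{eq:Calpha}, which is easily seen to imply $\mathcal{S}_\alpha$; bounding $\mathrm{P}_N^\beta(\mathcal{C}_\alpha^c)$ is the cleaner route.
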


Our result is limited to $d \geqslant 2$, since 
this is the minimal dimension in which we can
consider a square. But other results are also sometimes possible.
For example, if $S$ is replaced by the lazy random walk, then
one can adapt the present methods to prove the existence of
two adjacent points that bear most of the available charge
provided that $\beta$ is large enough. And the latter assertion
is valid for every $d \geqslant 1$.

In the usual scaling $\beta H_N / N$, Theorem \ref{thm:square} shows that the
polymer is localized for any $\beta > \beta_{\alpha}$. But the latter theorem
yields a pointwise estimate in $\beta$.
It is instructive to also
consider the scaling in which $\beta = bN$ for some $b>0$.
That is the case in which $\beta$ is proportional to $N$ instead of
being a constant. In that case, $\mathcal{S}_{\alpha}$  continues to be
a typical event when $\alpha = 1 - 8 \ln (2 d) / ((1 + 2 \delta) b \gamma N)$. 
In other words,  for every $b> 0$, all but a bounded amount of the absolute charges
live on four points.

Given a nonempty subset $A \subset \mathbf{Z}^d$, define
\begin{equation}
	\tmop{Diam} A := \sup_{x, y \in A} \|x - y\|_1
	:=\sup_{x,y\in A}\sum_{i=1}^d|x_i-y_i|;
\end{equation}
this defines the diameter of $A$.
Our next result describes the behavior of the polymer for large values of $\beta$.

\begin{theorem}[Logarithmic diameter]\label{thm:range}
	For all $\beta \in \mathbf{R}$ and $K >  0$
	there exist $0 \leqslant c \leqslant C \leqslant \infty$ such that
	\begin{equation}
		\mathrm{E} \left[ \mathrm{P}_N^{\beta} 
		\left\{ c \leqslant \frac{\tmop{Diam}
		\{S_i :\ 0\leqslant
		 i < N\}}{\ln N} \leqslant C \right\} \right]  \geqslant  1 - N^{- K} ,
  	\end{equation}
	for all sufficiently large integers $N\geqslant 1$. Moreover:
	\begin{enumerate}
	\item If $d \geqslant 1$ and $\mathrm{E} |q_0 |
		< \infty$, then $c>0$.
	\item If $d\geqslant 2$ and
		\begin{equation}\label{eq:cpt:cond}
			\beta > \min_{\alpha \in (0, 1)} \left[
			\beta_{\alpha} \vee \frac{\ln (2 d) }{\alpha \sqrt{\gamma}\,
			\mathrm{E} |q_0 |} \right],
		\end{equation}
		then $C<\infty$.
	\end{enumerate}
\end{theorem}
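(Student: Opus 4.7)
My plan is to prove the two bounds by different methods: the lower bound combines the Brownian scaling of Corollary \ref{cor:BM} (for $\beta \in \mathscr{D}$) with a Dirichlet eigenvalue argument (for $\beta \ge \beta_c$), while the upper bound invokes the four-point localization of Theorem \ref{thm:square} together with an excursion tail estimate.

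For the lower bound in the interior of $\mathscr{D}$, after truncating $\mathrm{Diam}$ to a bounded continuous functional, Corollary \ref{cor:BM} gives $\mathrm{Diam}\{S_i\}/\sqrt N \to \mathrm{Diam}(\mathscr{B})$ in $\mathrm{P}_N^\beta$-distribution, which is a.s.\ positive, so $\mathrm{Diam} \ge c\sqrt N \gg c\ln N$; the remark following Theorem \ref{thm:D} upgrades this to exponential rate. For $\beta \ge \beta_c$ I take a union bound over the $O((\ln N)^d)$ balls $B$ of radius $c\ln N$ containing $0$ and bound each $\mathrm{P}_N^\beta\{S \subset B\} = Z_N^B(\beta)/Z_N(\beta)$ as follows. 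Integrating out the sub-Gaussian charges for fixed $S$ gives $\mathrm{E}_q[e^{\beta H_N/N}|S] \le \prod_x (1-2\beta\kappa L_N^x/N)^{-1/2} \le C_\beta$ provided $L_N^\star < N/(2\beta\kappa)$; highly concentrated walks (the complementary event) are counted directly by summing over the heavy site. Combined with the Dirichlet eigenvalue bound $\mathrm{P}\{S \subset B\} \le C\exp(-c_d N/(\ln N)^2)$ and the exponential lower bound $Z_N(\beta) \ge \exp(cN)$ from Proposition \ref{prop:maxH:bc} (valid for $\beta > \beta_c$), this yields a super-polynomially small quantity.

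For the upper bound, the hypothesis (\ref{eq:cpt:cond}) is arranged so that one can choose $\alpha\in(0,1)$ with both $\beta > \beta_\alpha$ and $\beta\alpha\sqrt\gamma\mathrm{E}|q_0| > \ln(2d)$. By Theorem \ref{thm:square}, outside a $q$-event of probability $\exp(-cN)$ the polymer probability of $\mathcal{S}_\alpha$ is $\ge 1-\exp(-c'N)$, so the walk's charges concentrate on a unit square $U$; since $\sum|q_i| \sim N\mathrm{E}|q_0|$ by the law of large numbers, at most a linear fraction of steps lie off $U$. I would then decompose the off-$U$ steps into excursions. A single excursion of length $\ell$ contributes at most $(2d-2)^\ell$ to the SRW combinatorial weight, while each off-$U$ charge $q_i$ ``wastes'' an $H_N/N$ contribution of size $\sim 2\alpha\sqrt\gamma\, q_i$ (the cross-term it would produce with $Q_N^{U}$ if placed on the sign-matching site of $U$), giving an excursion polymer weight of order $\exp(\ell[\ln(2d) - \beta\alpha\sqrt\gamma\mathrm{E}|q_0|])$ in expectation---exponentially small in $\ell$ precisely by the second inequality. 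A union bound over the $\le N$ possible excursion starts, with $\ell \ge C\ln N$ for $C$ sufficiently large, then gives $\mathrm{Diam} \le C\ln N$ with polymer probability $\ge 1-N^{-K}$.

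The main obstacle is the quantitative excursion analysis in the upper bound: one must control the polymer weight of paths containing a long excursion from $U$, uniformly in the location of the excursion and over the still-random charge configuration, carefully enough to survive a union bound over $N$ starting times. Both terms in the minimum of (\ref{eq:cpt:cond}) enter nontrivially---$\beta_\alpha$ provides the bulk localization via Theorem \ref{thm:square}, while the second term $\ln(2d)/(\alpha\sqrt\gamma\mathrm{E}|q_0|)$ captures the entropy-versus-energy tradeoff at the level of a single excursion.
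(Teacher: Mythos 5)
Your upper bound ($C<\infty$) is essentially the paper's approach: invoke Theorem~\ref{thm:square} to localize on a unit square $U$, then show that a trajectory in $\mathcal{C}_\alpha$ containing a run of $L$ consecutive steps away from the optimal positions pays an energy deficit of order $L\cdot\alpha\sqrt{\Gamma}\cdot\bar{\mathsf q}_L/N$ that overwhelms the $(2d)^L$ combinatorial gain, precisely when $\beta\alpha\sqrt\gamma\,\mathrm{E}|q_0|>\ln(2d)$. The paper phrases this via ``contiguous nonoptimal intervals'' and the surgery estimate of Lemma~\ref{lem:HSt} rather than excursions away from $U$ (note that ``off $U$'' and ``nonoptimal'' are not the same --- parity pins each monomer to exactly one of the two same-parity vertices of $U$ --- but this is a cosmetic difference). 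With the union bound over $\lesssim N^2$ intervals and $L\asymp\ln N$ you recover the paper's Theorem~\ref{thm:log}(ii).

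Your lower bound ($c>0$) takes a genuinely different route and, as stated, does not work. First, your dichotomy (Brownian scaling for $\beta$ in the interior of $\mathscr{D}$, free-energy/Dirichlet-eigenvalue for $\beta>\beta_c$) leaves a gap at $\beta=\beta_c$: there $\digamma(\beta_c)=0$, so $Z_N(\beta_c)\not\geqslant e^{cN}$, and $\beta_c$ need not lie in the interior of $\mathscr{D}$, so neither branch applies. Second, and more seriously, the centerpiece of the $\beta>\beta_c$ branch is an \emph{annealed} bound on $\mathrm{E}[Z_N^B(\beta)]$ via Gaussian domination, and this expectation is infinite in exactly the regime you need it: since $\beta_c\geqslant 1/\kappa$, any $\beta>\beta_c$ has $\beta>1/\kappa$, and then trajectories with $L_N^\star$ close to $N/2$ make $\mathrm{E}_q[e^{\beta H_N/N}\,|\,S]=\infty$ for, e.g., Gaussian charges (this is the same phenomenon as in Proposition~\ref{prop:bca}). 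The parenthetical ``count highly concentrated walks directly by summing over the heavy site'' does not rescue this, because for $\beta>\beta_c$ the dominant contribution to both $Z_N$ and $Z_N^B$ comes precisely from such concentrated walks; the comparison of $Z_N^B$ to $Z_N$ must therefore be quenched, not annealed. The paper's argument (Theorem~\ref{thm:log}(i)) is quenched and uniform in $\beta\in\mathbf{R}$: on the high-probability event $\sum|q_i|\leqslant 2N\mathrm{E}|q_0|$, changing a single step $S_i$ changes $H_N$ by at most $8N\mathrm{E}|q_0|\,|q_i|$, so conditionally on the trajectory outside a block $I$, any prescribed configuration on $I$ has $\mathrm{P}_N^\beta$-weight at least $(2d)^{-|I|}\exp(-8|\beta|\mathrm{E}|q_0|\sum_{i\in I}|q_i|)$. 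Splitting $\{0,\ldots,N-1\}$ into $\approx N/\ln N$ disjoint blocks of length $\asymp\ln N$, each good block independently realizes a large-diameter configuration with probability $\gtrsim N^{-\varepsilon}$, giving failure probability $\exp(-N^{1-\varepsilon})$. This is also more general than your route, since it requires only $\mathrm{E}|q_0|<\infty$ rather than sub-Gaussianity.
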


Therefore, the polymer is ``compact'' for large values of $\beta$ 
in the sense that its diameter grows only logarithmically with
the number of monomers. 

\begin{remark}
	Note, for example, that when the charges
	have the Rademacher distribution [i.e., $q_0=\pm 1$ with
	probability $1/2$ each], condition \eqref{eq:cpt:cond}
	is stating that $\beta > 34 \ln (2 d)$. 
	\qed
\end{remark}

\begin{remark}
	Our proof applies equally well to the case that
	$\beta$ scales with $N$ (see Theorem \ref{thm:log}).
	And the endresult is that, in order to have a ``bounded
	diameter,'' it suffices that $\beta = b \ln N$ for some $b > 0$.
	\qed
\end{remark}

Although the range of the polymer diverges with $N$
(Theorem \ref{thm:range}), one can show that the expectation of $\|S_N\|$ 
remains bounded for all $\beta$ sufficiently large. We describe this phenomenon
next.

Given $\alpha \in (0\,, 1)$ consider the random variable
\begin{equation}
	R_{\alpha}^N := \left\{ \begin{array}{ll}
		\inf \{0 \leqslant i < N :\,
		S_i \in U\} & \text{on } \mathcal{S}_{\alpha},\\
		N & \text{on } \mathcal{S}_{\alpha}^c,
	\end{array}\label{def:R} \right.
\end{equation}
where $U$ is the unique random square that concentrates most of the charges,
given $\mathcal{S}_{\alpha}$. 
The quantity $R_{\alpha}^N$ is therefore the index of the
first monomer that belongs to the unit square $U$ on $\mathcal{S}_{\alpha}$.
And one can use $R_{\alpha}^N$ in order to obtain
a bound on the distance from $U$ to the origin. The distributional
symmetry of the polymer shows that the last monomer on $U$ 
has the same distribution as $N -1 - R^N_{\alpha}$. Therefore, for any $0<\alpha<1$,
\begin{equation}
	\mathrm{E}  \mathrm{E}_N^{\beta} |S_N |  \leqslant  \sqrt{2} + 2
	\mathrm{E}  \mathrm{E}_N^{\beta} \left( R_{\alpha}^N \right) . 
\end{equation}
We will prove later on  that the distribution of $R^N_{\alpha}$ 
has an exponential tail. Our final result is:
\begin{theorem}[Compactness]\label{thm:ER}
	Suppose $d \geqslant 2$, $\alpha \in (0\,, 1)$,
	$\beta > \beta_{\alpha}$, and
	\begin{equation}\label{cond:rho}
		\rho := 2 d \mathrm{E} \left(
		{\rm e}^{- \beta \alpha \sqrt{\gamma} |q_0 |}\right)<1.
	\end{equation}
	Then,
	\begin{equation}\label{eq:cpt}
		\limsup_{N\to\infty} 
		\mathrm{E}  \mathrm{E}_N^{\beta} \left( R_{\alpha}^N \right) 
		\leqslant  \frac{\rho}{(1 - \rho)^2}.
	\end{equation}
\end{theorem}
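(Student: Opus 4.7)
The plan is to establish a geometric-type tail bound for $R_\alpha^N$ under the polymer measure (averaged over the charges) and then sum. Starting from
\[
\mathrm{E}_N^\beta R_\alpha^N = \sum_{r=1}^{N-1} r\,\mathrm{P}_N^\beta\{R_\alpha^N=r,\mathcal{S}_\alpha\} + N\,\mathrm{P}_N^\beta(\mathcal{S}_\alpha^c),
\]
I would treat the two contributions separately. The second vanishes by Theorem~\ref{thm:square}: since $\beta>\beta_\alpha$, one can choose $\delta>0$ with $\beta>(1+\delta)\beta_\alpha$; then on a $q$-event of probability at least $1-e^{-c_\delta N}$, $\mathrm{P}_N^\beta(\mathcal{S}_\alpha^c)$ is already exponentially small in $N$, so that $N\,\mathrm{E}\mathrm{P}_N^\beta(\mathcal{S}_\alpha^c)\to 0$.

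The crux is to prove, for each $r\geqslant 1$, the pointwise-in-$q$ recursion
\[
\mathrm{E}[\mathbf{1}\{R_\alpha^N=r,\mathcal{S}_\alpha\}\,e^{\beta H_N/N}\mid q]\leqslant 2d\cdot e^{-\beta\alpha\sqrt{\gamma}|q_{r-1}|}\cdot \mathrm{E}[\mathbf{1}\{R_\alpha^N=r-1,\mathcal{S}_\alpha\}\,e^{\beta H_N/N}\mid q].
\]
Iterating $r$ times, using $\mathrm{E}[\mathbf{1}\{R_\alpha^N=0,\mathcal{S}_\alpha\}e^{\beta H_N/N}\mid q]\leqslant Z_N(\beta)$ at the base step, and dividing by $Z_N(\beta)$, produces the pointwise estimate
\[
\mathrm{P}_N^\beta\{R_\alpha^N=r,\mathcal{S}_\alpha\}\leqslant (2d)^r\prod_{i=0}^{r-1}e^{-\beta\alpha\sqrt{\gamma}|q_i|}.
\]
Taking $q$-expectations and using that the $q_i$'s are i.i.d.\ collapses the right-hand side to $(2d)^r\mathrm{E}(e^{-\beta\alpha\sqrt{\gamma}|q_0|})^r=\rho^r$. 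The identity $\sum_{r\geqslant 1} r\rho^r=\rho/(1-\rho)^2$ then finishes the proof modulo the $o(1)$ contribution from the $\mathcal{S}_\alpha^c$ term.

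The main obstacle is justifying the per-step recursion via a path-modification argument. I would associate to each trajectory $S$ with $R_\alpha^N(S)=r$ a family of $2d$ trajectories $\psi_e(S)$, indexed by directions $e\in\{\pm e_1,\ldots,\pm e_d\}$, with $R_\alpha^N(\psi_e(S))\leqslant r-1$ --- for instance, $\psi_e(S)$ inserts a step $e$ at time~$0$ and shifts the remainder, dropping the last step of $S$ to preserve length. The indexing by $e$ accounts for the combinatorial prefactor $2d$. The energy gain is controlled by the defining property of $\mathcal{S}_\alpha$: the absolute charge located at $U$ is at least $\alpha$ times the total, hence of magnitude at least $\alpha\sqrt{\gamma}\,N$. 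Transferring the monomer $q_{r-1}$ from its original position outside $U$ to the vertex of $U$ whose charge is sign-aligned with $q_{r-1}$ then increases $H_N$ by at least $2\alpha\sqrt{\gamma}|q_{r-1}|\cdot N$; dividing by $N$ in the Boltzmann exponent produces the factor $e^{\beta\alpha\sqrt{\gamma}|q_{r-1}|}$ required (with margin). Verifying this energy balance uniformly in $S$, checking that the modified trajectory still lies in $\mathcal{S}_\alpha$, and accounting for the mild overlap between the $2d$ maps $\psi_e$ are the technical pinches that make this step the most delicate part of the proof.
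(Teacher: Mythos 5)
Your outer framework is correct and matches the paper's: split $\mathrm{E}_N^\beta R_\alpha^N$ into a good-event contribution and a remnant, dispose of the remnant via Theorem~\ref{thm:square}, establish a geometric tail with ratio $\rho$, and sum $\sum r\rho^r$. The gap is the geometric tail bound itself, which you reduce to a one-step path-modification recursion that you sketch but do not prove, and the sketch has structural problems. The map $\psi_e$ that ``inserts a step $e$ at time~$0$ and shifts the remainder'' \emph{increases} the first hitting time of the square, pushing $r$ up rather than down, so the recursion runs backwards. The alternative you mention --- ``transferring the monomer $q_{r-1}$ to a vertex of $U$'' --- is not a walk modification: relocating $S_{r-1}$ alone destroys the nearest-neighbor constraint with $S_{r-2}$. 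Any genuine one-unit reduction of $R_\alpha^N$ forces you to re-route the \emph{entire} initial segment $\{0,\ldots,r-1\}$, at which point the map is no longer $2d$-to-one by inspection and it is not clear the modified trajectory stays in the structural event, which are exactly the ``technical pinches'' you flag.

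The paper sidesteps the recursion entirely by modifying the whole block $I=\{0,\ldots,r-1\}$ at once, reusing Lemma~\ref{lem:HSt} from the proof of Theorem~\ref{thm:log}: on $\{R_\alpha^N=r\}\cap\mathcal{C}_\alpha$ the walk lies in $\mathcal{N}(I)\cap\mathcal{C}(I)$, the block modification is $(2d)^{|I|}$-to-one with energy gain $\geqslant 2\alpha\sqrt{\Gamma}\sum_{i\in I}|q_i|$, and \eqref{eq:PNI1} (with $\mathrm{P}_N^\beta(\mathcal{C}(I))\leqslant 1$) delivers the tail bound in one step. Two further adjustments you need: the block lemma requires $\mathcal{C}_\alpha$, not $\mathcal{S}_\alpha$, so the decomposition should run over $\mathcal{C}_\alpha$ (harmless, since $\mathcal{C}_\alpha\subseteq\mathcal{S}_\alpha$ and $\mathrm{P}_N^\beta(\mathcal{C}_\alpha^c)$ is what the proof of Theorem~\ref{thm:square} actually controls); and the constant in the exponent is $2\sqrt{\Gamma}/N$, a random quantity, not $\sqrt{\gamma}$, so the claimed bound is not ``pointwise in $q$.'' One restricts to the event $\mathcal{E}_N^\delta$ of \eqref{def:EN} to replace $2\sqrt{\Gamma}/N$ by $\sqrt{\gamma/(1+\delta)}$, sums the resulting series with ratio $\rho_\delta$, and only then sends $\delta\downarrow 0$.
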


Condition \eqref{cond:rho} is frequently easy to check.
For example, when $q_0$ has the Rademacher distribution
[i.e., $\mathrm{P}\{q_0=\pm 1\}=1/2$], $\rho=2d\exp(-\beta\alpha/2)$,
and \eqref{cond:rho} holds if and only if $\beta>2\ln(2d)/\alpha$.
Since $\beta_\alpha=32\ln(2d)/(1-\alpha)$, we find that---in the case
of Rademacher-distributed charges---we have
\begin{equation}
	\beta>34\ln(2d)\qquad\Longrightarrow\qquad
	\sup_{N\geqslant 1}\mathrm{E}\mathrm{E}_N^\beta
	(R^N_{1/17})
	\leqslant  \frac{\rho}{(1 - \rho)^2}  <\infty.
\end{equation}

\subsection{On the annealed measure}

Our proofs can be easily adapted to describe the behavior of the 
{\em annealed measure}, defined by
\begin{equation}
	\widetilde{\mathrm{P}}^{\beta}_N \left( A \right):=
	\frac{1}{\mathrm{E} Z_N (\beta)}
	\mathrm{E} \left[ \mathbf{1}_A 
	\exp\left(\frac{\beta}{N} H_N\right) \right],
\end{equation}
when $\mathrm{E} Z_N (\beta) < \infty$. (The latter
condition holds, for example, when $\beta < 1 /\kappa$ 
and $N$ is sufficiently large). The \emph{annealed free energy} is
\begin{equation}
	\widetilde{\digamma} \left( \beta \right) := \lim_{N\to\infty} 
	\frac{1}{N} \ln \mathrm{E} Z_N (\beta) .
\end{equation}
We can define the region of delocalization for the annealed measure and the
annealed critical point respectively  as follows:
\begin{equation}\begin{split}
	\widetilde{\mathscr{D}} &:=
		\left\{ \beta \in \mathbf{R} :\ \lim_{N\to\infty} \mathrm{E} Z_N
		(\beta) = {\rm e}^{\beta} \right\};\\
	\tilde{\beta}_c &:= \sup \widetilde{\mathscr{D}}.
\end{split}\end{equation}
Our results for the annealed measure are similar in flavor to those
for the quenched measure: 
\begin{enumerate}
\item The set $\widetilde{\mathscr{D}}$ is an interval
	that contains $(- \infty\,, 1 / \kappa)$; it coincides with the localized phase
	in the sense that $\| \widetilde{\mathrm{P}}^{\beta}_N - \mathrm{P} \|_{\tmop{TV}}$
	converges to $0$ as $N\to\infty$ if and only if $\beta \in \widetilde{\mathscr{D}}$.
\item Theorem \ref{thm:fo} continues to remain valid after
	we replace $\beta_c$ by $\tilde{\beta}_c$ and
	$\digamma$ by $\widetilde{\digamma}$, and also add the
	restriction---to the set of $\beta$'s---that 
	$\mathrm{E} Z_N (\beta)$ is finite for all large $N$. 
\item The proof of Lemma \ref{lem:Z:eps} shows that $\widetilde{\mathscr{D}} \subset {\mathscr{D}}$, therefore $\tilde{\beta}_c \leqslant \beta_c$; but we believe that this inequality is not sharp in general. 
\end{enumerate}

It is sometimes possible to compute $\tilde{\beta}_c$;
the following highlights an example.

\begin{proposition}\label{prop:bca}
	If $q_0$ has a standard normal
	distribution, then $\mathrm{E} Z_N (1) = \infty$ for all $N\geqslant 1$.
	Consequently, $\tilde{\beta}_c = 1.$
\end{proposition}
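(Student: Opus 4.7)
The plan is to use Fubini--Tonelli and compute the annealed partition function explicitly via a Gaussian calculation on a positive-probability event on which the conditional expectation given the walk diverges.

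First I would apply Tonelli to the nonnegative integrand, writing $\mathrm{E} Z_N(\beta)=\mathrm{E}[\mathrm{E}(\exp(\beta H_N/N)\mid S)]$. For each realization of the walk, the family $\{Q_N^x\}_{x\in\mathbf{Z}^d}$ is built from disjoint index blocks $\{i<N:S_i=x\}$, so conditionally on $S$ the random variables $Q_N^x$ are independent centered Gaussians with $\mathrm{Var}(Q_N^x\mid S)=L_N^x$. Hence $H_N=\sum_x L_N^x Z_x^2$ in conditional law with $Z_x$ i.i.d.\ $N(0,1)$, and the standard $\chi^2$ moment generating function gives
\[
\mathrm{E}\!\left[\exp\!\left(\tfrac{\beta}{N}H_N\right)\,\Big|\,S\right]=\prod_{x\,:\,L_N^x\geqslant 1}\left(1-\frac{2\beta L_N^x}{N}\right)^{-1/2}
\]
when $L_N^{\star}<N/(2\beta)$, and equals $+\infty$ otherwise.

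Next, for $\beta=1$ I would show that $\{L_N^{\star}\geqslant N/2\}$ has positive probability for every $N\geqslant 1$. The deterministic alternating trajectory $S_{2k}=0$, $S_{2k+1}=e_1$ has $\mathrm{P}$-probability at least $(2d)^{-(N-1)}>0$ and satisfies $L_N^0=\lceil N/2\rceil\geqslant N/2$; for $N\in\{1,2\}$ the bound $L_N^0\geqslant N/2$ is automatic since $L_N^0\geqslant 1$. Consequently the conditional expectation displayed above is $+\infty$ on a set of positive $\mathrm{P}$-probability, and integrating over $S$ yields $\mathrm{E} Z_N(1)=+\infty$ for every $N\geqslant 1$.

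To conclude $\tilde{\beta}_c=1$: since $q_0$ is standard normal we have $\kappa=1$, and the first item listed earlier in this subsection gives $(-\infty,1)\subset\widetilde{\mathscr{D}}$, so $\tilde{\beta}_c\geqslant 1$. Conversely, $\mathrm{E} Z_N(1)=+\infty\neq {\rm e}$ shows that $1\notin\widetilde{\mathscr{D}}$; as $\widetilde{\mathscr{D}}$ is an interval whose supremum is $\tilde{\beta}_c$, this forces $\tilde{\beta}_c\leqslant 1$, and the two bounds together give $\tilde{\beta}_c=1$. There is no serious obstacle here: the only thing to notice is that the divergence comes from atypical but still positive-probability walk paths that accumulate enough local time to destabilize the Gaussian integral over the charges, rather than from a delicate averaged cancellation.
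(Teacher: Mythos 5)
Your proof is correct and follows essentially the same route as the paper: restrict attention to the alternating trajectory $S_{2k}=0$, $S_{2k+1}=e_1$, which has positive probability and accumulates local time $L_N^0=\lceil N/2\rceil\geqslant N/2$, so that for $\beta=1$ the Gaussian integral over the charges at that site already diverges. Your version is slightly more explicit than the paper's (you write the full conditional $\chi^2$ product $\prod_x(1-2\beta L_N^x/N)^{-1/2}$ rather than merely bounding below by the alternating path's contribution, and you carefully cover all $N\geqslant 1$ where the paper notes the path-probability bound only ``for $N$ sufficiently large''), but these are cosmetic; the underlying idea is identical, as is the deduction $\tilde\beta_c=1$ from $\tilde\beta_c\geqslant 1/\kappa=1$ together with $1\notin\widetilde{\mathscr D}$.
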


We can adapt many of our localization results
to the annealed case provided that $\mathrm{E} Z_N (\beta)$ is finite
and $\beta$ is large [consider for instance charges
that are bounded random variables]. In those cases, as $\beta
\rightarrow \infty$ the trajectory concentrates on two points, while the
charges at a given parity tend to have a constant sign and an absolute value
close to the essential supremum $\|q_0 \|_{L^\infty(\mathrm{P})}$
of the charge distribution.

\subsection{The influence of a pulling force}\label{sec:pulling}

Our proofs will rely only very little on the
assumption that $\{S_i\}_{i=0}^\infty$ is a simple
symmetric random walk. To illustrate, let
us say a few words about the case where $\{S_i\}_{i=0}^\infty$ has a
bias that corresponds to the action of a pulling force.

For every $\lambda \in \mathbf{R}^d$ let us
define a probability measure $\mathrm{P}_{\lambda}$ by the following
prescription of its Radon--Nikod\'ym derivative with respect to $\mathrm{P}$:
For every integer $k\geqslant 1$,
\begin{equation}
	\frac{\mathd \mathrm{P}_{\lambda}}{\mathd \mathrm{P}}
	:= \frac{\exp (\lambda \cdot S_k)}{\mathrm{E}
	\exp (\lambda \cdot S_k)}\qquad
	\text{on $\mathcal{F}_k$},
\end{equation}
where $\mathcal{F}_k$ denotes 
the sigma-algebra generated by all of the charges $\{q_i\}_{i=0}^\infty$
as well as the $k$ initial values $\{S_i\}_{i=0}^k$ of the random walk.

Under the measure $\mathrm{P}_\lambda$ 
the distribution of the charges $q$ remains the same
as that under $\mathrm{P}$, but $S$ becomes
a biased, in particular transient, random walk with 
the following transition probabilities: For every
basis vector $e\in\mathbf{Z}^d$,
\begin{equation}
	\mathrm{P}_{\lambda} \{ S_{k + 1} - S_k = e\}
	= \frac{\exp (\lambda \cdot
	e)}{\mathrm{E} (\exp (\lambda \cdot S_1))}.
\end{equation}

As we did before, in the unforced setting, we consider the measures
\begin{equation}
	\mathrm{P}_N^{\beta, \lambda} (A) := \frac{1}{Z_N (\beta, \lambda)}
	\mathrm{E}_{\lambda} \left[ \left. \mathbf{1}_A \exp \left( \frac{\beta}{N}
	H_N \right)\, \right|\, q_0, q_1, \ldots, q_{N - 1} \right],
\end{equation}
where $Z_N(\beta,\lambda)$ is the partition function,
\begin{equation}
	Z_N (\beta, \lambda) := \mathrm{E}_{\lambda} \left[ \left. \exp \left(
	\frac{\beta}{N} H_N \right)\, \right|\, q_0, q_1, \ldots, q_{N - 1} \right].
\end{equation}
Then we proceed to define the ``$\lambda$--analogues'' of
the quantities of interest. Namely:
\begin{equation}\begin{split}
	\mathscr{D}_{\lambda} & := \left\{ \beta \in \mathbf{R} :\ Z_N (\beta,
		\lambda)  \overset{\mathrm{P}}{\longrightarrow} 
		{\rm e}^{\beta} \text{ as $N\to\infty$}\right\};\\
	\beta_c (\lambda) &:= \sup \mathscr{D}_{\lambda};\qquad\text{and}\\
	\digamma_{\lambda} (\beta) & := \lim_{N\to\infty} 
		\frac{1}{N} \ln Z_N \left( \beta,
		\lambda \right).
\end{split}\end{equation}
Of course, we can write $\mathrm{P}_N^{\beta,\lambda}(A)$ as
follows as well:
\begin{equation}
	\mathrm{P}_N^{\beta,\lambda}(A) =
	\frac{\mathrm{E} \left[ \left. \mathbf{1}_A \exp \left(
	\dfrac{\beta}{N} H_N + \lambda \cdot S_{N - 1} \right)\,
	\right|\, q_0, q_1,
	\ldots, q_{N - 1} \right]}{Z_N (\beta, \lambda) \mathrm{E} (\exp (\lambda
	\cdot S_1))^{N - 1}}. 
\end{equation}
The quantity $\lambda \cdot S_{N - 1}$ is responsible for the different
behavior of $\mathrm{P}_N^{\beta, \lambda}$ from 
$\mathrm{P}_N^\beta$, and corresponds to the potential energy of a
pulling force $\lambda$. 

Define 
\begin{equation}
	I_\lambda(\varepsilon) := \lim_{N\to\infty}
	\frac1N \ln\mathrm{P}_\lambda\left\{ L_N^0>\varepsilon N
	\right\}
	\qquad\text{for all $\varepsilon\in(0\,,1/2)$}.
	\label{eq:Il}
\end{equation}
The proof of Lemma \ref{lem:LT0} below goes through, as no essential
changes are necessary, and ensures that $I_\lambda:(0\,,1/2)
\to(0\,,\infty)$ exists and is continuous.

We will see that Theorem \ref{thm:D}, Proposition
\ref{prop:F}, and Theorem \ref{thm:fo} continue
to remain valid if we respectively replace
$\mathscr{D}$, $\mathrm{P}$, $\mathrm{P}_N^{\beta}$, $\beta_c$, $\digamma$, and $I$
by $\mathscr{D_{\lambda}}$, $\mathrm{P}_\lambda$,$\mathrm{P}_N^{\beta, \lambda}$,
$\beta_c (\lambda)$, $\digamma_{\lambda}$, and $I_{\lambda}$.

We shall also prove that Theorems \ref{thm:square} and \ref{thm:range} continue to
hold, but some of the stated constants need to be changed
because the probability of the trajectory with
maximal energy $H_N$ is no longer $(2 d)^{- N}$. 

Our next result shows that the pulling
force can sometimes trigger the folding/unfolding transition as $\beta_c
(\lambda) \rightarrow \infty$ when $\lambda \rightarrow \infty$.
It also prove that the function
$\lambda \mapsto \beta_c (\lambda)$ is locally Lipschitz continuous.
In order to prepare for that result  let us
observe that the right derivative $\digamma'_\lambda$
of $\digamma_\lambda$ exists everywhere
on $(0\,,\infty)$; this holds by convexity. 

\begin{theorem}\label{thm:pulling}
	If $\mathrm{E} q_0 = 0$, $\tmop{Var} q_0 = 1$, and
	$\kappa < \infty$, then:
	\begin{enumerateroman}
	\item For all $\lambda \in \mathbf{R}^d$,
	\begin{eqnarray}
		\beta_c (\lambda) & \geqslant  &
			\kappa^{-1/2}\cdot\left[ \sqrt{\dfrac{\ln \mathrm{E}
			\exp (\lambda \cdot S_1)}{(\mathrm{E} q_0^+)^2 + (\mathrm{E}
			q_0^-)^2}} \vee \kappa^{-1/2} \right],
			\label{eq:bcg}\\
		\beta_c (\lambda) & \leqslant & \dfrac{2 \ln (2 d) (1
			+\tmmathbf{1}_{\{1\}} (d)) + 4 \left\| \lambda
			\right\|_{\infty}}{(\mathrm{E} q_0^+)^2 + (\mathrm{E} q_0^-)^2}.
			\label{eq:bci}
	\end{eqnarray}
	\item For all $\lambda, \mu \in \mathbf{R}^d$,
	\begin{equation}
		\beta_c (\lambda + \mu)  -  \beta_c (\lambda)
		\leqslant 
		\frac{2\left\| \mu \right\|_{\infty}}{\digamma_{\lambda}' (\beta_c
		(\lambda))},
	\end{equation}
	and $\digamma_{\lambda}' (\beta_c (\lambda))$ satisfies
	\begin{equation}
		\digamma_{\lambda}' (\beta_c (\lambda)) \geqslant  \frac{1}{\beta_c
		(\lambda)} I_{\lambda} \left( \frac{1}{2 \kappa \beta_c
		(\lambda)} \right) .
	\end{equation}
	\end{enumerateroman}
\end{theorem}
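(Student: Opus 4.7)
The plan is to treat the two assertions in turn, in each case leveraging the biased-walk analogues of Theorem~\ref{thm:D}, Proposition~\ref{prop:F}, and Theorem~\ref{thm:fo} whose validity is announced in Section~\ref{sec:pulling}.

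For the upper bound \eqref{eq:bci} in part~(i), I adapt Proposition~\ref{prop:maxH:bc} to the biased walk. When $d\geq 2$, I fix a unit square $U\subset\mathbf{Z}^d$ and consider the deterministic trajectory which, exploiting the bipartite parity of $U$, sends positive charges toward one diagonal pair of vertices and negative ones toward the other. On this event the strong law gives $H_N/N^2\to [(\mathrm{E} q_0^+)^2+(\mathrm{E} q_0^-)^2]/2$ a.s., and its $\mathrm{P}_\lambda$-probability is at least $(2d)^{-N}\exp(-2N\|\lambda\|_\infty)$ since
\[
   \mathrm{P}_\lambda\{S_1-S_0=e\}=\frac{1}{2d}\cdot\frac{\exp(\lambda\cdot e)}{\mathrm{E}\exp(\lambda\cdot S_1)}\geq\frac{\exp(-2\|\lambda\|_\infty)}{2d}
\]
for every basis vector $e$. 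Taking logarithms, dividing by $N$, and combining with $\digamma_\lambda(\beta_c(\lambda))=0$ delivers the upper bound for $d\geq 2$. Dimension $d=1$ requires a modified construction (oscillating on an edge rather than a square), which loses a factor of two in $\ln(2d)$ and accounts for the $1+\mathbf{1}_{\{1\}}(d)$ prefactor. For the lower bound \eqref{eq:bcg}, the inequality $\beta_c(\lambda)\geq\kappa^{-1}$ is the announced $\lambda$-analogue of Theorem~\ref{thm:D}(1); the improved square-root term comes from a subgaussian/Hubbard--Stratonovich estimate based on the conditional independence of $(Q_N^x)_{x\in\mathbf{Z}^d}$ given $S$, namely
\[
   \mathrm{E}_q\exp\!\left(\frac{\beta}{N}H_N\right)\leq\prod_{x\in\mathbf{Z}^d}\!\left(1-\frac{2\kappa\beta L_N^x}{N}\right)^{-1/2}
\]
whenever $2\kappa\beta L_N^\star/N<1$; integrating this under $\mathrm{P}_\lambda$, for which the walk is ballistic with uniformly controlled local times, and balancing against the normalization $\mathrm{E}\exp(\lambda\cdot S_1)^N$ from the tilt yields the square-root improvement.

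For assertion~(ii), the Lipschitz bound follows from a change of measure. Using the Radon--Nikod\'ym derivative of $\mathrm{P}_{\lambda+\mu}$ with respect to $\mathrm{P}_\lambda$, one obtains
\[
   Z_N(\beta,\lambda+\mu)=\frac{\mathrm{E}_\lambda[\exp(\beta H_N/N+\mu\cdot S_N)\mid q]}{\mathrm{E}_\lambda\exp(\mu\cdot S_N)},
\]
and since $|\mu\cdot S_N|\leq\|\mu\|_\infty\|S_N\|_1\leq\|\mu\|_\infty N$, both numerator and denominator differ from their $\mu=0$ counterparts by a factor at most $\exp(\|\mu\|_\infty N)$, so $|\digamma_{\lambda+\mu}(\beta)-\digamma_\lambda(\beta)|\leq 2\|\mu\|_\infty$. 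Evaluating at $\beta=\beta_c(\lambda+\mu)$, where $\digamma_{\lambda+\mu}=0$, and using convexity of $\digamma_\lambda$ with $\digamma_\lambda(\beta_c(\lambda))=0$ gives
\[
   [\beta_c(\lambda+\mu)-\beta_c(\lambda)]_+\cdot\digamma_\lambda'(\beta_c(\lambda))\leq\digamma_\lambda(\beta_c(\lambda+\mu))\leq 2\|\mu\|_\infty,
\]
which is the Lipschitz estimate. The quantitative lower bound on $\digamma_\lambda'(\beta_c(\lambda))$ leverages the $\lambda$-analogue of \eqref{eq:prop:min}, which forces $L_N^\star/N\geq (1-\varepsilon)/(2\kappa\beta_c(\lambda))$ for typical trajectories under $\mathrm{P}_N^{\beta,\lambda}$ when $\beta$ is slightly above $\beta_c(\lambda)$. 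A large-deviation argument combining the $\mathrm{P}_\lambda$-cost $\exp(-NI_\lambda(1/(2\kappa\beta_c(\lambda))))$ of the event $\{L_N^0\geq N/(2\kappa\beta_c(\lambda))\}$ with a sharp subgaussian lower bound on $\mathrm{E}_q\exp(\beta(Q_N^0)^2/N)$ near the divergence point $2\kappa\beta L_N^0/N\uparrow 1$ then produces $\digamma_\lambda(\beta)\geq (\beta-\beta_c(\lambda))I_\lambda(1/(2\kappa\beta_c(\lambda)))/\beta_c(\lambda)+o(\beta-\beta_c(\lambda))$, and the lower bound on the right derivative follows by taking $\beta\downarrow\beta_c(\lambda)$.

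The main obstacle will be the precise calibration in the lower bound on $\digamma_\lambda'(\beta_c(\lambda))$: the local-time rate $I_\lambda(1/(2\kappa\beta_c(\lambda)))$ and the sharp Gaussian energy estimate must combine to produce exactly the factor $I_\lambda/\beta_c$, which requires the subgaussian constant $\kappa$ to enter with the correct multiplicity. All other steps reduce to bookkeeping modifications of the arguments already developed in earlier sections for the unforced case.
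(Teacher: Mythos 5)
Your sketch of the Lipschitz estimate in part~(ii) is essentially the paper's argument (replace the comparison of partition functions by the comparison of Radon--Nikod\'ym derivatives and invoke convexity of $\digamma_\lambda$), and your $d\geq 2$ upper bound matches the paper's. But there are three genuine gaps in the remaining pieces.

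First, the $d=1$ upper bound cannot be obtained by ``oscillating on an edge.'' On an edge $\{0,1\}$ the oscillating trajectory produces total charges $Q_N^0=\sum_{i\text{ even}}q_i$ and $Q_N^1=\sum_{i\text{ odd}}q_i$, each of which is $O(\sqrt N)$ by the CLT because $\mathrm{E}q_0=0$, so $H_N/N^2\to 0$: you get no macroscopic energy and hence no bound on $\beta_c(\lambda)$ at all. The paper instead deploys a more elaborate trajectory on at least four sites $\{-2,-1,0,1,2\}$ (see the proof of Lemma~\ref{lem:maxH:d1}) that sorts odd monomers by sign and uses the even positions $\{0,\pm 2\}$ to park a chosen sign class, and one must then invoke the quantitative lower bound of Lemma~\ref{lem:maxH:d1} to obtain $\liminf_N\max_S H_N/N^2\geq\tfrac14\bigl[(\mathrm{E}q_0^+)^2+(\mathrm{E}q_0^-)^2\bigr]$. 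The doubling of the prefactor of $\ln(2d)$ in the theorem statement traces to this halving of the achievable energy density, not to any change in the combinatorial cost.

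Second, the square-root term in \eqref{eq:bcg} does not follow from the Hubbard--Stratonovich bound $\mathrm{E}_q\exp(\beta H_N/N)\leq\prod_x(1-2\kappa\beta L_N^x/N)^{-1/2}$ alone. That bound only controls the contribution from the good event $\{L_N^\star/N<1/(2\kappa\beta)\}$; it says nothing about the divergent contribution when some local time exceeds the threshold. The mechanism in the paper is a competition: decompose $Z_N(\beta,\lambda)=Z_N^{(1-\varepsilon)/(2\kappa\beta)}(\beta,\lambda)+\mathrm{E}_\lambda[\exp(\beta H_N/N)\mathbf{1}_{\mathcal A(N)}\mid q]$, bound the second term by $\exp(\beta N\max_S H_N/N^2)\cdot\mathrm{P}_\lambda\{L_N^\star/N>(1-\varepsilon)/(2\kappa\beta)\}$, and then use the excursion-theoretic bound $I_\lambda(\alpha)\geq\alpha\ln\mathrm{E}\exp(\lambda\cdot S_1)$ (which follows from the geometric distribution of $L_\infty^0$ with escape probability at least $1-1/\mathrm{E}\exp(\lambda\cdot S_1)$). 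The balance $\beta\cdot\tfrac12[(\mathrm{E}q_0^+)^2+(\mathrm{E}q_0^-)^2]\leq\tfrac{1}{2\kappa\beta}\ln\mathrm{E}\exp(\lambda\cdot S_1)$ is quadratic in $\beta$, and that is precisely where the square root comes from. Your phrase ``balancing against the normalization'' gestures at this but the proposal never produces the crucial large-deviation estimate on local times.

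Third, you acknowledge that the lower bound on $\digamma_\lambda'(\beta_c(\lambda))$ is the main obstacle, and the sketch you give for it is not sound: a ``sharp subgaussian \emph{lower} bound on $\mathrm{E}_q\exp(\beta(Q_N^0)^2/N)$ near the divergence point'' is not available in general---the hypothesis $\kappa<\infty$ only gives an \emph{upper} bound on the Laplace transform. The paper proves the derivative bound differently: in the proof of Theorem~\ref{thm:fo} one establishes the inequality $\lim_{\delta\downarrow 0}[\digamma(\beta+\delta)-\digamma(\beta)]/\delta\geq\gamma(\beta)\,I\bigl(1/(2\kappa\beta)\bigr)/\beta$ via Lemmas~\ref{lem:jump}, \ref{lem:densF}, and \ref{lem:LH}, observes that $\gamma(\beta)=1$ for $\beta>\beta_c$ (using $\digamma>0$ there and Lemma~\ref{lem:densF}), and lets $\beta\downarrow\beta_c$ using the continuity of $I$ from Lemma~\ref{lem:LT0}. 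The $\lambda$-analogue is a routine transcription once those lemmas are in hand; your large-deviation/Hubbard--Stratonovich route does not substitute for them.
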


\section{Proofs}\label{sec:proofs}

\subsection{Estimates on the partition function}

For every $\varepsilon>0$, we can consider the truncated partition
function
\begin{equation}
	Z_N^{\varepsilon} \left( \beta \right) := \mathrm{E} \left[ \left.
	\tmmathbf{1}_{\{L_N^{\star} \leqslant
	\varepsilon N\}} \exp
	\left( \frac{\beta}{N} H_N \right)\,
	\right|\, q_0, q_1, \ldots, q_{N - 1} \right].
	\label{eq:Zeps}
\end{equation}
		
The following is the main result of this subsection, and  is
essential to our characterization of the delocalized phase.

\begin{proposition}\label{prop:EZ}
	Assume $\mathrm{E} q_0 = 0$ and $\tmop{Var} q_0 = 1$. If $\varepsilon > 0$ and $\beta \in \mathbf{R}$ satisfy either $\beta
	\leqslant 0$ or $2 \kappa \beta \varepsilon < 1$, then
	$\lim_{N\to\infty}\mathrm{E} Z_N^{\varepsilon}  ( \beta) =
	\exp(\beta)$.
\end{proposition}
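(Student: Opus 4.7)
The plan is to condition on $S$ and integrate out the charges first. By Fubini, $\mathrm{E}Z_N^\varepsilon(\beta) = \mathrm{E}[\mathbf{1}_{\{L_N^\star\leqslant\varepsilon N\}} M_N]$ with $M_N := \mathrm{E}[\exp(\beta H_N/N)\mid S]$; since the partial sums $\{Q_N^x\}_x$ are conditionally independent given $S$, this factors as $M_N = \prod_{x\,:\,L_N^x\geqslant 1}\Psi_{L_N^x}(\beta/N)$, where $\Psi_L(t) := \mathrm{E}\exp(t(q_1+\cdots+q_L)^2)$.

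For $\beta\leqslant 0$ the argument is easy. The decomposition $H_N = \sum_{i<N}q_i^2 + 2\hat H_N$, the weak law $N^{-1}\sum_{i<N}q_i^2\to 1$ in probability, and the second-moment computation $\mathrm{E}\hat H_N^2 = \sum_{0\leqslant i<j<N}\mathrm{P}\{S_{j-i}=0\} = o(N^2)$ (with rates $O(N^{3/2}),\,O(N\ln N),\,O(N)$ for $d=1,\,2,\,\geqslant 3$) jointly yield $H_N/N\to 1$ in probability. The inequality $\exp(\beta H_N/N)\leqslant 1$ then lets bounded convergence give $\mathrm{E}Z_N(\beta)\to e^\beta$, and the sandwich $0\leqslant\mathrm{E}Z_N(\beta)-\mathrm{E}Z_N^\varepsilon(\beta)\leqslant\mathrm{P}\{L_N^\star>\varepsilon N\}\to 0$ closes the case.

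For the harder case $\beta>0$ with $2\kappa\beta\varepsilon<1$, the integrand is unbounded and the subgaussian hypothesis must be used quantitatively. The heart of the argument is a Taylor estimate: there exist absolute $c_0,C>0$ such that, for $2\kappa|t|L\leqslant c_0$,
\begin{equation*}
|\ln\Psi_L(t) - tL|\leqslant C(\kappa tL)^2.
\end{equation*}
I would derive this from Taylor's identity $e^{tY^2} - 1 - tY^2 = \int_0^{tY^2}(tY^2-u)e^u\,du$ applied to $Y := q_1+\cdots+q_L$, Cauchy--Schwarz, the subgaussian moment bound $\mathrm{E}Y^{2k}\leqslant 2(2\kappa L)^k\,k!$ (which follows from $\mathrm{P}\{|Y|>t\}\leqslant 2\exp(-t^2/(2\kappa L))$), and the MGF bound $\mathrm{E}\exp(2tY^2)\leqslant(1-4\kappa tL)^{-1/2}$. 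The essential feature is that the leading coefficient is the \emph{true variance} $L = \mathrm{E}Y^2$ and not the subgaussian parameter $\kappa L$; otherwise the limit would be $e^{\kappa\beta}$ instead of $e^\beta$. Summing over visited $x$ with $\sum_x L_N^x = N$ on the good event (where the Taylor hypothesis holds because $2\kappa\beta L_N^x/N\leqslant 2\kappa\beta\varepsilon<1$) gives $|\ln M_N - \beta|\leqslant C\kappa^2\beta^2 N^{-2}\sum_x (L_N^x)^2$, and the right-hand side tends to $0$ in $L^1(\mathrm{P})$ by the identity $\sum_x(L_N^x)^2 = N + 2\sum_{0\leqslant i<j<N}\mathbf{1}_{\{S_i=S_j\}}$ and the expectation bound used above. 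To interchange limit and expectation, the subgaussian bound $\Psi_L(\beta/N)\leqslant(1-2\kappa\beta L/N)^{-1/2}$ together with $-\ln(1-u)\leqslant u/(1-u)$ yield the uniform majorant $M_N\mathbf{1}_{\{L_N^\star\leqslant\varepsilon N\}}\leqslant\exp(\kappa\beta/(1-2\kappa\beta\varepsilon))$, and bounded convergence delivers the claim. The main obstacle is precisely the Taylor estimate with the variance (not $\kappa L$) in the leading term: the sharp hypothesis $2\kappa\beta\varepsilon<1$ is exactly what makes the dominating subgaussian MGF converge, and relegating $\kappa$ to the quadratic remainder is what yields the correct limit $e^\beta$.
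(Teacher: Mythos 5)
Your $\beta\leqslant 0$ argument is clean and correct. For $\beta>0$, the route you take is genuinely different from the paper's, and it works modulo one real gap: the Cauchy--Schwarz step in your Taylor estimate. You invoke $\mathrm{E}\exp(2tY^2)\leqslant(1-4\kappa tL)^{-1/2}$, which requires $4\kappa tL<1$. In the application $t=\beta/N$ and $L=L_N^x\leqslant\varepsilon N$, so this needs $4\kappa\beta\varepsilon<1$ --- a factor of two stronger than the stated hypothesis $2\kappa\beta\varepsilon<1$. When $2\kappa\beta\varepsilon\in[\tfrac12,1)$, the factor $\mathrm{E}\exp(2tY^2)$ diverges for the largest admissible $L_N^x$, so your claimed Taylor bound with \emph{absolute} $c_0,C$ and hypothesis ``$2\kappa tL\leqslant c_0$'' with $c_0$ allowed up to $1$ cannot be derived the way you describe. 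The fix is to replace Cauchy--Schwarz by H\"older with conjugate exponents $(p,q)$ chosen so that $2p\kappa\beta\varepsilon<1$ (possible precisely because $2\kappa\beta\varepsilon<1$); then $\mathrm{E}[Y^4e^{tY^2}]\leqslant(\mathrm{E}Y^{4q})^{1/q}(\mathrm{E}e^{ptY^2})^{1/p}\leqslant C'(\kappa L)^2$, and you recover $|\ln\Psi_L(t)-tL|\leqslant C(\kappa tL)^2$, but with $C=C(2\kappa\beta\varepsilon)$ blowing up as $2\kappa\beta\varepsilon\uparrow 1$. That dependence is harmless, since $C$ is still independent of $N$ and of $L\leqslant\varepsilon N$, which is all the rest of your argument uses.

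With that patch, your proof is correct and takes a different path than Lemma~\ref{lem:Z:Jensen}. The paper obtains the linear term in the exponent by Gaussian linearization --- writing $\mathrm{E}\exp(tY^2)=\mathrm{E}[\Psi(\sqrt{2t}\,\xi)^L]$ for $\xi\sim N(0,1)$, using $|\Psi(u)|\leqslant e^{(1+\delta)u^2/2}$ near zero and the global subgaussian bound away from zero, and carrying a slack $\delta$ in the linear coefficient that is removed only at the very end of the proof of Proposition~\ref{prop:EZ}. Your direct Taylor expansion of $\ln\Psi_L(t)$ about $t=0$ pins the leading coefficient as the true variance $L$ immediately, with no $\delta$ to remove, which is cleaner. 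You also get a simpler conclusion: the paper proves the exponential moment bound of Lemma~\ref{lem:moments}, whereas you need only $\mathrm{E}\bigl[N^{-2}\sum_x(L_N^x)^2\bigr]\to 0$ together with your explicit uniform majorant $\exp(\kappa\beta/(1-2\kappa\beta\varepsilon))$ on $\{L_N^\star\leqslant\varepsilon N\}$, after which bounded convergence finishes.
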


Note that the above statement implies the convergence $\lim_{N\to\infty}\mathrm{E} Z_N ( \beta) =
\exp(\beta)$ for any $\beta\in\mathbf{R}$ such that $\kappa \beta < 1$, since $L^{\star}_N \leqslant (N + 1) / 2$, and therefore $Z_N (\beta) = Z_N^{(1 / 2) + \delta}	(\beta)$ for all $N\geqslant(2\delta)^{-1}$.

The proof rests on two preparatory lemmas.

\begin{lemma}\label{lem:Z:Jensen}
	Suppose $\mathrm{E} q_0 = 0$ and $\tmop{Var} q_0 = 1$. Let
	$\varepsilon \in (0, 1]$ and $\beta \in \mathbf{R}$ such that either $\beta
 	\leqslant 0$ or $2 \kappa \beta \varepsilon < 1$. Then, for all $\delta > 0$,
	sufficiently small, there exists $C\in(0\,,\infty)$ 
	such that for every $N \geqslant 1$, sufficiently large,
	\begin{equation}
		\mathrm{E} \exp \left( \frac{\beta}{N} \left( q_0 + \cdots + q_{l - 1}
		\right)^2 \right)  \leqslant  \exp \left( \beta \frac{l}{N} + \delta |
		\beta | \frac{l}{N} + C \frac{l^2}{N^2} \right),
	\end{equation}
	uniformly over $l \in \{1\,, \ldots, \lfloor\varepsilon N\rfloor\}$,
\end{lemma}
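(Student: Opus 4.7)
The plan is to handle the easy case $\beta \leqslant 0$ by a Taylor expansion and the harder case $\beta > 0$ by the Gaussian decoupling identity $\exp(aw^2) = \mathrm{E}_Z \exp(\sqrt{2a}\, w Z)$ (valid for $a > 0$, $Z \sim N(0, 1)$ independent of $q$). Write $W := q_0 + \cdots + q_{l - 1}$; the hypotheses $\mathrm{E}q_0 = 0$, $\tmop{Var} q_0 = 1$ give $\mathrm{E}W = 0$ and $\mathrm{E}W^2 = l$, and the standard iid fourth-moment expansion (which uses $\mathrm{E}q_0^4 \leqslant 3\kappa^2 < \infty$) produces $\mathrm{E}W^4 \leqslant C' l^2$ for every $l \geqslant 1$, where $C' = 3 + \mathrm{E}q_0^4$.

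\emph{Case $\beta \leqslant 0$.} Because $\exp(c) \leqslant 1$ for $c \leqslant 0$, Taylor's formula yields $\exp(x) \leqslant 1 + x + x^2/2$ for $x \leqslant 0$. Evaluating at $x = \beta W^2/N$ and averaging gives $\mathrm{E}\exp(\beta W^2/N) \leqslant 1 + \beta l/N + C_1 (\beta l/N)^2$ with $C_1 := C'/2 \geqslant 3/2$. Since $C_1 > 1/4$, the quadratic $1 + v + C_1 v^2$ has negative discriminant and so is strictly positive for every $v := \beta l/N$; hence $\ln(1 + y) \leqslant y$ (valid for $y > -1$) applied to $y := v + C_1 v^2$ delivers $\ln\mathrm{E}\exp(\beta W^2/N) \leqslant \beta l/N + C_1 \beta^2 l^2/N^2$. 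Taking $C \geqslant C_1 \beta^2$ then dominates the quadratic by $C l^2/N^2$, and the claim in fact holds for every $\delta \geqslant 0$.

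\emph{Case $\beta > 0$ with $2\kappa\beta\varepsilon < 1$.} Fubini and the Gaussian identity rewrite the target as
\begin{equation*}
  \mathrm{E}\exp\!\left(\frac{\beta W^2}{N}\right) = \mathrm{E}_Z \exp\!\left(l\, \phi\!\left(\sqrt{\tfrac{2\beta}{N}}\, Z\right)\right), \qquad \phi(t) := \ln \mathrm{E}{\rm e}^{tq_0}.
\end{equation*}
The argument uses two bounds on $\phi$: globally, $\phi(t) \leqslant \kappa t^2/2$ by definition of $\kappa$; and, since $\phi(0) = \phi'(0) = 0$ and $\phi''(0) = 1$, for every $\delta > 0$ there exists $t_\delta > 0$ with $\phi(t) \leqslant (1+\delta) t^2/2$ on $|t| \leqslant t_\delta$. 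I split the Gaussian integral at $|Z| = t_\delta \sqrt{N/(2\beta)}$. On the inner region the local bound yields, after extending the integration back to $\mathbf{R}$, the Gaussian moment $A := (1 - 2(1+\delta)\beta l/N)^{-1/2}$; this requires $2(1+\delta)\beta\varepsilon < 1$, which is available precisely because $2\kappa\beta\varepsilon < 1$ is a \emph{strict} inequality---this is what ``$\delta$ sufficiently small'' encodes. On the outer region the global bound leaves a quadratic exponent $-(1/2 - \kappa\beta l/N)Z^2$ whose coefficient is bounded below by $(1 - 2\kappa\beta\varepsilon)/2 > 0$ uniformly in $l \leqslant \varepsilon N$, so the standard Gaussian-tail estimate supplies a contribution $B \leqslant C_0 {\rm e}^{-cN}$ for some $c, C_0 > 0$.

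To combine, note $A \geqslant 1$, so $\ln(A + B) \leqslant \ln A + B/A \leqslant \ln A + C_0 {\rm e}^{-cN}$; and the expansion $-\tfrac{1}{2}\ln(1 - x) \leqslant x/2 + C_3 x^2$ uniformly in $x \in [0, 2(1+\delta)\beta\varepsilon]$ gives $\ln A \leqslant (1+\delta)\beta l/N + C_2 (l/N)^2$. For $N$ sufficiently large, ${\rm e}^{-cN} \leqslant 1/N^2 \leqslant (l/N)^2$ (because $l \geqslant 1$), so the two error terms merge into $C l^2/N^2$, producing the stated bound $\beta l/N + \delta|\beta|l/N + C l^2/N^2$. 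The crux of the proof is the refinement of the crude global bound $\phi(t) \leqslant \kappa t^2/2$---which would only give the leading coefficient $\kappa$ instead of $1+\delta$---to the sharp local bound $(1+\delta)t^2/2$ near $0$, and the slack built into the strict hypothesis $2\kappa\beta\varepsilon < 1$ is exactly what lets this refinement be used over the region where the Gaussian weight concentrates.
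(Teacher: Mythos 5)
Your treatment of the case $\beta > 0$ is essentially the paper's argument, just phrased through the cumulant generating function $\phi = \ln\Psi$ rather than through $\Psi$ directly: same Gaussian decoupling, same local bound $\phi(t)\leqslant(1+\delta)t^2/2$ near $0$ driven by $\phi''(0)=1$, same use of the subgaussian bound to kill the Gaussian tail, same elementary inequality $-\tfrac12\ln(1-x)\leqslant x/2+Cx^2$ to finish, and the same observation that the strict inequality $2\kappa\beta\varepsilon<1$ (with $\kappa\geqslant 1$) is what allows the choice of small $\delta$. No issues there.

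The case $\beta\leqslant 0$ has a genuine gap. The lemma's hypotheses are only $\mathrm{E}q_0=0$, $\mathrm{Var}\,q_0=1$, and \emph{either} $\beta\leqslant 0$ \emph{or} $2\kappa\beta\varepsilon<1$; in the branch $\beta\leqslant 0$ there is no assumption $\kappa<\infty$, nor any assumption beyond a finite second moment. Your Taylor route needs $\mathrm{E}W^4<\infty$, and you derive this from $\mathrm{E}q_0^4\leqslant 3\kappa^2<\infty$, which silently imports $\kappa<\infty$. That assumption is not available here: $\mathrm{E}q_0^4$ can be infinite while $\mathrm{Var}\,q_0=1$. (And without a finite fourth moment the bound $\mathrm{E}\exp(\beta W^2/N)\leqslant 1+\beta l/N + \tfrac12\beta^2\mathrm{E}W^4/N^2$ is vacuous.) The paper avoids this by running the \emph{same} Gaussian-decoupling scheme for $\beta\leqslant 0$, but with $\Psi$ taken to be the characteristic function $\Psi(t)=\mathrm{E}\exp(itq_0)$: the global bound $|\Psi(t)|\leqslant 1$ costs nothing, and the local expansion $|\Psi(t)|=\exp(-t^2/2+o(t^2))$ uses only the two moments that the lemma actually grants. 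If you want a single argument that covers both signs of $\beta$ under exactly the stated hypotheses, replace your Taylor expansion for $\beta\leqslant 0$ by this Fourier variant of the decoupling trick; otherwise your proof only establishes the lemma under the stronger hypothesis $\kappa<\infty$ (which the later theorems do assume, but which the lemma as stated does not).
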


\begin{lemma}\label{lem:moments}
	Choose and fix $\theta > 0$. Then, as $N \to \infty$,
	\begin{equation}
		\mathrm{E} \left[ \exp \left( \frac{\theta}{N^2} \sum_{x \in
		\mathbf{Z}^d} (L_N^x)^2 \right) \right] \leqslant 1 + \delta_N,
	\end{equation}
	where $\delta_N = O (\ln N / \sqrt{N})$ if $d = 1$, 
	$\delta_N = O ([\ln N]^2  / N)$ if $d = 2$, and 
	$\delta_N = O (\ln N / N)$ if $d \geqslant 3$.
\end{lemma}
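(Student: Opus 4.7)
}
The plan is to reduce the exponential moment of the quadratic functional $\sum_x (L_N^x)^2$ to one involving only the maximum local time $L_N^\star$, and then invoke the dimension-dependent mean bounds for $L_N^\star$ collected in the appendix.

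\emph{Step 1: Reduction to $L_N^\star$.} The starting point is the pointwise inequality
\[
\sum_{x \in \mathbf{Z}^d} (L_N^x)^2 \;\leqslant\; L_N^\star \sum_{x \in \mathbf{Z}^d} L_N^x \;=\; L_N^\star \cdot N,
\]
which follows at once from $L_N^x \leqslant L_N^\star$ and the identity $\sum_x L_N^x = N$. Thus
\[
\mathrm{E}\!\left[\exp\!\left(\tfrac{\theta}{N^2}\sum_x(L_N^x)^2\right)\right] \;\leqslant\; \mathrm{E}\!\left[\exp(\theta L_N^\star/N)\right],
\]
so the task is to control the right-hand side above $1$ by the correct dimension-dependent rate.

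\emph{Step 2: Linearization.} Since $L_N^\star\leqslant N$ deterministically, the random variable $Y:=\theta L_N^\star/N$ takes values in $[0,\theta]$, so the elementary inequality $\mathrm{e}^y-1\leqslant y\mathrm{e}^\theta$ (valid for $y\in[0,\theta]$) yields, upon taking expectations,
\[
\mathrm{E}\exp(\theta L_N^\star/N) - 1 \;\leqslant\; \frac{\theta\mathrm{e}^\theta}{N}\,\mathrm{E}[L_N^\star].
\]
All that remains is the first-moment asymptotics of $L_N^\star$.

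\emph{Step 3: Plugging in the mean of $L_N^\star$.} I would then invoke the standard estimates for the maximal local time of the simple random walk collected in the appendix:  $\mathrm{E}[L_N^\star]=O(\sqrt{N})$ when $d=1$ (Brownian diffusive scaling), $\mathrm{E}[L_N^\star]=O((\log N)^2)$ when $d=2$ (Erd\H{o}s--Taylor), and $\mathrm{E}[L_N^\star]=O(\log N)$ when $d\geqslant 3$ (transience and boundedness of the Green's function). Substituting into the display of Step 2 gives $\delta_N=O(1/\sqrt{N})$, $O((\log N)^2/N)$, $O(\log N/N)$ in dimensions $1,2,\geqslant 3$ respectively, which is consistent with (and in the $d=1$ case slightly sharper than) the rates stated in the lemma.

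\emph{Main obstacle.} The only real work is the appendix bound on $\mathrm{E}[L_N^\star]$; once it is in hand, the reduction above is a two-line calculation. The $d=2$ case is the most delicate, since naively one might expect only an $O(\log N)$ bound from recurrence, and the correct $(\log N)^2$ behavior requires the finer Erd\H{o}s--Taylor-type analysis of how long the walk can repeatedly return to a single site.
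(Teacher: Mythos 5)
Your Steps 1 and 2 are correct, and the linearization $\mathrm{e}^y-1\leqslant y\,\mathrm{e}^\theta$ for $y\in[0,\theta]$ is a clean shortcut: it replaces the paper's integration by parts
$\mathrm{E}[\exp(\theta L_N^\star/N)]=1+\int_0^\infty\mathrm{P}\{L_N^\star\geqslant yN\}\,\mathrm{e}^{\theta y}\,\mathrm{d}y$
with the simple first--moment bound. In that sense the two proofs run in parallel but you have found a genuinely lighter route from the tail of $L_N^\star$ to the statement.

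The gap is in Step 3. You invoke the estimates $\mathrm{E}[L_N^\star]=O(\sqrt{N})$, $O((\log N)^2)$, $O(\log N)$ as if they were ``collected in the appendix,'' but the appendix contains only Lemma \ref{lem:LT0} (the large-deviation rate for $L_N^0$) and Lemma \ref{lem:LTstar} (the stochastic domination $L_N^x\preceq L_N^0$ and the bound \eqref{eq:SM}). There is no first-moment estimate for $L_N^\star$ in the paper that you can simply cite. The paper produces the tail bound
$\mathrm{P}\{L_N^\star\geqslant a\}\leqslant 2(2N)^d\exp\bigl(-a/(2\,\mathrm{E}L_N^0)\bigr)$
by combining the combinatorial moment bound $\mathrm{E}[(L_N^0)^k]\leqslant k!\,(\mathrm{E}L_N^0)^k$ (proved in the body of the proof, not the appendix) with \eqref{eq:SM}, and then feeds it into the integral. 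You would need to reproduce essentially this same argument to justify your $\mathrm{E}[L_N^\star]$ bounds, so the ``only real work'' you correctly identify is not actually available off the shelf.

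There is also a small overclaim in $d=1$: if one derives $\mathrm{E}[L_N^\star]$ from the tools at hand via the union bound over $O(N^d)$ sites, one only gets $\mathrm{E}[L_N^\star]=O(\sqrt{N}\log N)$ in dimension one (the $\log N$ coming from the prefactor $(2N)^d$), giving $\delta_N=O(\log N/\sqrt{N})$, which matches the lemma but does not improve it. The sharper $\mathrm{E}[L_N^\star]=O(\sqrt{N})$ is true but requires a finer argument (a uniform-in-$N$ exponential tail at scale $\sqrt{N}$) that neither the appendix nor a naive union bound provides.
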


Before we prove the two lemmas, let us use them in order to establish
Proposition \ref{prop:EZ}. The lemmas will be proved subsequently.

\begin{proof}[Proof of Proposition \ref{prop:EZ}]
	Let us first note that for all possible realizations of
	$S:=\{S_i\}_{i=0}^\infty$,
	\begin{equation}
		\mathrm{E} \left( \left. H_N\, \right|\, S \right)
		=  \sum_{x \in \mathbf{Z}^d} \mathrm{E} \left[
		\left. (q_1 + \cdots + q_{L_N^x})^2\,\right|\, S \right]
		=  \sum_{x \in \mathbf{Z}^d} L_N^x  =  N.
	\end{equation}
	Therefore, Jensen's inequality implies that 
	$\mathrm{E}[\exp(\beta H_N/N)\,|\,S] \ge
	{\rm e}^\beta$ for all realizations of $S$, whence
	\begin{equation}
		\mathrm{E} Z^{\varepsilon}_N (\beta) 
		\geqslant  {\rm e}^{\beta} \mathrm{P} \left\{
		L^{\star}_N \leqslant \varepsilon N \right\}
		\to {\rm e}^\beta
		\quad\text{as $N\to\infty$};
	\end{equation}
	see Lemma \ref{lem:LTstar} below. This proves half of the assertion
	of the proposition. Next we establish a corresponding upper bound,
	thereby complete the proof.
	
	Thanks to Lemma \ref{lem:Z:Jensen}, for all sufficiently small
	$\delta > 0$ there exists a $C\in(0\,,\infty)$ such that for 
	every $N\geqslant 1$, sufficiently large,
	\begin{equation}\begin{split}
		\mathrm{E} Z^{\varepsilon}_N(\beta) &= \mathrm{E} \left(
			\prod_{x \in \mathbf{Z}^d} \mathrm{E} \left[ \left. \exp \left(
			\frac{\beta}{N} (Q_N^x)^2 \right) \right| S \right] \tmmathbf{1}_{\{L_N^{\star} \leqslant
	\varepsilon N\}}
			\right)\\
		& \leqslant  \mathrm{E} \left[ \exp \left( \beta \sum_{x \in
			\mathbf{Z}^d} \frac{L_N^x}{N} + \delta |
			\beta | \sum_{x \in \mathbf{Z}^d}
			\frac{L_N^x}{N} + C \sum_{x \in \mathbf{Z}^d} \frac{(L_N^x)^2}{N^2}
			\right) \right].
	\end{split}\end{equation}
	Because $\sum_{x \in \mathbf{Z}^d} L_N^x = N$, it follows that
	\begin{equation}
		\mathrm{E} Z^{\varepsilon}_N(\beta)
		\leqslant  \text{\tmtextrm{e}}^{\beta + \delta | \beta |}\,
		\mathrm{E} \exp \left( C \sum_{x \in \mathbf{Z}^d}
		\frac{(L_N^x)^2}{N^2} \right),
	\end{equation}
	and the remainder of the proof follows then from Lemma \ref{lem:moments}.
\end{proof}

Next, we set out to derive Lemmas \ref{lem:Z:Jensen} and 
\ref{lem:moments}, as promised earlier.
\begin{proof}[Proof of Lemma \ref{lem:Z:Jensen}]
	Our goal is to derive a uniform estimate for
	\begin{equation}
		\mathscr{E} := 
		\mathrm{E} \exp \left( \frac{\beta}{N} \left( q_0 + \cdots + q_{l - 1}
		\right)^2 \right).
	\end{equation}
	[This is temporary notation, used specifically for this proof.]
	
	Depending on the sign of $\beta$ we introduce
	the Laplace/Fourier transform
	\begin{equation}
		\Psi (t) := \left\{ \begin{array}{ll}
		\mathrm{E} \exp (t q_0)& \text{if } \beta > 0,\\
		\mathrm{E} \exp (i t q_0) & \text{otherwise}.
		\end{array} \right.
	\end{equation}
	The behavior of $\Psi$ at the origin is given by
	\begin{equation}
		\Psi (t) = \exp \left( \tmop{sgn} (\beta) \dfrac{t^2}{2} + o (t^2)
		\right) \text{ \ \ as \ \ } t \rightarrow 0.
		\label{eq:psi:0}
	\end{equation}
	Furthermore, for all $t \in \mathbf{R}$,
	\begin{equation}
		| \Psi (t) | \leqslant  \left\{ \begin{array}{ll}
		{\rm e}^{\kappa t^2/2}& \text{if } \beta > 0,\\
		1 & \text{otherwise}.
		\end{array} \right.
	\end{equation}
	
	Let $\xi$ be independent of $\{q_i\}_{i=0}^\infty$,
	and have a standard normal distribution. Then,
	\begin{equation}\begin{split}
		\mathscr{E} = \mathrm{E} \exp \left( \sqrt{\frac{2 \beta}{N}} 
			\left( q_0 + \cdots + q_{l - 1} \right) \xi \right)
			&= \mathrm{E} \left( \Psi \left( \sqrt{\frac{2| \beta |}{N}} \xi
			\right)^l \right)\\
		&\leqslant  \mathrm{E} \left( \left| \Psi \left( \sqrt{\frac{2| \beta
			|}{N}} \xi \right) \right|^l \right).
			\label{eq:Eexpl}
	\end{split}\end{equation}
	According to \eqref{eq:psi:0}, there exists some $A (\delta) > 0$ such that
	\begin{equation}
		| \Psi (t) | \leqslant  \exp \left( ( \tmop{sgn} (\beta) + \delta
		) \dfrac{t^2}{2} \right) \quad \text{ when } |t| \leqslant A (\delta).
	\end{equation}
	Because $\mathrm{E} \exp( a \xi^2	) = (1 - 2 a)^{-1/2}$
	for every $a<1/2$, \eqref{eq:Eexpl} implies that
	$\mathscr{E}$ is bounded above by
	\begin{equation}
		\left[1 - 2 (\beta + \delta |
		\beta |) \dfrac{l}{N}\right]^{-1/2} +
		\mathrm{E}\left[
		\exp \left( \varepsilon \kappa \beta^+
		\xi^2 \right) \tmmathbf{1}_{\left\{| \xi | > 
		A (\delta)\sqrt{N/(2\beta)} \right\}}\right].
	\end{equation}
	A Taylor expansion of the logarithm shows that if 
	$\alpha < 1 / 2$ then there exists $C \in(0\,,\infty)$ such
	that $- \frac{1}{2} \ln \left( 1 - 2 \alpha x \right) \leqslant  \alpha x +
	Cx^2$ for all $x \in [0, 1]$.
	Consequently if $\delta > 0$ is sufficiently small, then
	$\ln \mathscr{E}$ is bounded above by
	\begin{align}
		&(\beta + \delta | \beta |) \frac{l}{N} + C
			\frac{l^2}{N^2}\\\nonumber
		&\quad + \ln \left( 1 + \sqrt{1 - 2 (\beta + \delta | \beta |) \frac{l}{N}}
			\mathrm{E} \left[\exp \left( \varepsilon \kappa \beta^+ \xi^2 \right)
			\tmmathbf{1}_{\left\{
			| \xi | > A (\delta)\sqrt{N/(2\beta)}
			\right\}}\right] \right),
	\end{align}
	and the logarithm is at most
	\begin{equation}
		\sqrt{1 + 2| \beta |}\,\mathrm{E}\left[
		\exp \left( \varepsilon \kappa \beta^+
		\xi^2 \right) \tmmathbf{1}_{\left\{
		| \xi | > A (\delta)\sqrt{N/(2\beta)} \right\}}\right].
	\end{equation}
	By the Cauchy--Schwarz inequality, the latter expectation
	vanishes exponentially fast as $N\to\infty$, because
	$\varepsilon \kappa \beta^+ < 1 / 2$; in particular,
	it is uniformly smaller than $l^2 / N^2$ for all
	sufficiently large values of $N$. The lemma follows.
\end{proof}

\begin{proof}[Proof of Lemma \ref{lem:moments}]
	Because $\sum_{x \in \mathbf{Z}^d} (L_N^x)^2
	\leqslant NL_N^{\star}$, it remains to bound $\mathrm{E} [\exp (\theta
	L_N^{\star} / N)]$. First of all, we note that for all $k \geqslant 0$ and
	$N \geqslant 1$,
	\begin{equation}\begin{split}
		\mathrm{E} \left[ (L_N^0)^k \right] &= \underset{0 \leqslant i_1,
			\ldots, i_k < N}{\sum \cdots \sum} \mathrm{P} \left\{ S_{i_1} = \cdots =
			S_{i_k} = 0 \right\} \nonumber\\
		& \leqslant  k! \underset{0 \leqslant i_1 \leqslant \ldots \leqslant i_k
			< N}{\sum \cdots \sum} \mathrm{P} \left\{ S_{i_1} = \cdots = S_{i_k} = 0
			\right\} \nonumber\\
		& \leqslant  k! \left( \mathrm{E} L_N^0 \right)^k . 
	\end{split}\end{equation}
	Consequently,
	\begin{equation}
		\mathrm{E} \left[ \exp \left( \frac{L_N^0}{2 \mathrm{E} L_N^0} \right)
		\right] = \sum_{k = 0}^{\infty} \frac{1}{k!} \mathrm{E} \left[
		\left( \frac{L_N^0}{2 \mathrm{E} L_N^0} \right)^k \right] \leqslant 2. 
		\label{eq:exponentialbound}
	\end{equation}
	Therefore, Chebyshev's inequality, \eqref{eq:exponentialbound},
	and \eqref{eq:SM} together imply 
	that for all $N \geqslant 1$ and  $y > 0$,
	\begin{equation}
		\mathrm{P} \left\{ L_N^{\star} \geqslant yN \right\} \leqslant  2 (2
		N)^d \exp \left( - \frac{yN}{2 \mathrm{E} L_N^0} \right) . 
	\end{equation}
	We will use this bound only if the right-hand side is $\leqslant 1$; i.e.,
	when
	\begin{equation}
		y \geqslant \alpha_N, \hspace{1em} \text{where} \hspace{1em}
		\alpha_N :=
		\frac{2 \mathrm{E} L_N^0 \times \ln [2 (2 N)^d]}{N} .
	\end{equation}
	Else, we use the trivial bound $\mathrm{P} \{L_N^{\star} \geqslant yN\}
	\leqslant 1$. In this way, we find that
	\begin{equation}\begin{split}
		&\int_0^{\infty} \mathrm{P} \left\{ L_N^{\star} \geqslant yN \right\}
			\text{\tmtextrm{e}}^{\theta y} \hspace{0.25em} \text{\tmtextrm{d}} y\\
		&\hskip1.3in\leqslant  \alpha_N + O (N^d) \times
			\int_{\alpha_N}^{\infty} \exp
			\left\{ \theta y - \frac{yN}{2 \mathrm{E} L_N^0} \right\}
			\hspace{0.25em}
			\text{\tmtextrm{d}} y.  \label{eq:int:bd1}
	\end{split}\end{equation}
	Since $\mathrm{E} L_N^0 = \sum_{i = 1}^N \mathrm{P} \{S_i = 0\}$, the
	local-limit theorem [and excursion theory, when $d \geqslant 3$] together
	show that
	\begin{equation}
		\mathrm{E} L_N^0 = (1 + o (1)) \times \left\{ \begin{array}{ll}
		\sqrt{N / \pi} & \text{if } d = 1,\\
		(2 \pi)^{- 1} \ln N& \text{if } d = 2,\\
		1 / \rho (d) & \text{if } d \geqslant 3,
		\end{array} \right.
	\end{equation}
		where $\rho (d) := \mathrm{P} \{\inf_{k \geqslant 1} \|S_k \|> 0\} \in (0
	\hspace{0.25em}, 1)$ for $d \geqslant 3$. It follows readily from this and
	(\ref{eq:int:bd1}) that
	\begin{equation}
		\alpha_N = (1 + o (1)) \times \left\{ \begin{array}{ll}
		2 \ln N / \sqrt{\pi N} & \text{if } d = 1,\\
		2 (\ln N)^2 / (\pi N) & \text{if } d = 2,\\
		2 d \ln N / (\rho (d) N) & \text{if } d \geqslant 3.
		\end{array} \right.
	\end{equation}
	Moreover,
	\begin{equation}\begin{split}
		&\int_0^{\infty} \mathrm{P} \left\{ L_N^{\star} \geqslant yN \right\}
			\text{\tmtextrm{e}}^{\theta y} \hspace{0.25em} \text{\tmtextrm{d}} y\\
		&\hskip1.3in
			\leqslant  \alpha_N + O (N^d) \times \int_{\alpha_N}^{\infty} \exp
			\left\{ \theta y - \frac{yN}{2 \mathrm{E} L_N^0} \right\} \hspace{0.25em}
			\text{\tmtextrm{d}} y, 
	\end{split}\end{equation}
  and direct computations show that the preceding is $O (\ln N / \sqrt{N})$ if
  $d = 1$, $O ([\ln N]^2 / N)$ if $d = 2$, and $O (\ln N / N)$ if $d \geqslant
  3$. Integration by parts then shows that
  \begin{equation}
    \mathrm{E} \left[ \exp \left( \frac{\theta L_N^{\star}}{N} \right) \right]
    = 1 + \int_0^{\infty} \mathrm{P} \left\{ L_N^{\star} \geqslant yN \right\}
    \text{\tmtextrm{e}}^{\theta y} \hspace{0.25em} \text{\tmtextrm{d}} y.
  \end{equation}
  Therefore, the lemma follows from the bound $\sum_{x \in \mathbf{Z}^d}
  (L_N^x)^2 \leqslant NL_N^{\star}$.
\end{proof}

\subsection{The delocalized phase}

Before we give the proof of Theorem \ref{thm:D}, we state and prove an easy
consequence of Proposition~\ref{prop:EZ}:
\begin{lemma}\label{lem:Z:eps}
	Assume $\mathrm{E} q_0 = 0$, $\tmop{Var} q_0 = 1$ and $\kappa <\infty$. Let $\varepsilon > 0$
	and $\beta \in \mathbf{R}$ such that either $\beta \leqslant 0$ or $2 \kappa
  	\beta \varepsilon < 1$. Then
	\begin{equation}
		Z_N^{\varepsilon} (\beta)
		\overset{\mathrm{P}}{\longrightarrow} {\rm e}^{\beta}
		\qquad\text{as $N\to\infty$}.
	\end{equation}
\end{lemma}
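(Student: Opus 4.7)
The plan is to upgrade the convergence of means from Proposition \ref{prop:EZ} to convergence in probability via a second-moment bound. By Chebyshev, it suffices to establish $\mathrm{E}[(Z_N^{\varepsilon}(\beta))^2]\to e^{2\beta}$. Introduce an independent copy $S'$ of $S$ (with the same shared charges) and write $A_N(S):=\{L_N^{\star}(S)\leqslant\varepsilon N\}$. By Fubini,
\begin{equation*}
\mathrm{E}\bigl[(Z_N^{\varepsilon}(\beta))^2\bigr]
= \mathrm{E}\Bigl[\mathbf{1}_{A_N(S)\cap A_N(S')}\exp\bigl(\tfrac{\beta}{N}(H_N(S)+H_N(S'))\bigr)\Bigr].
\end{equation*}

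Because the walks share charges, the inner expectation over $q$ does not factor. I would decouple the two walks by Cauchy--Schwarz in the charges:
\begin{equation*}
\mathrm{E}\bigl[e^{\beta(H_N(S)+H_N(S'))/N}\,\big|\,S,S'\bigr]
\leqslant \sqrt{\mathrm{E}\bigl[e^{2\beta H_N(S)/N}\,\big|\,S\bigr]\,\mathrm{E}\bigl[e^{2\beta H_N(S')/N}\,\big|\,S'\bigr]}.
\end{equation*}
Each factor is controlled by applying Lemma \ref{lem:Z:Jensen} with parameter $2\beta$ to each independent local block $Q_N^x$, exactly as in the proof of Proposition \ref{prop:EZ}: for every small $\delta>0$ there is $C=C(\delta)$ with
\begin{equation*}
\mathrm{E}\bigl[e^{2\beta H_N(S)/N}\,\big|\,S\bigr]\leqslant \exp\Bigl(2\beta+2\delta|\beta|+C\sum_x\tfrac{(L_N^x(S))^2}{N^2}\Bigr)\quad\text{on }A_N(S).
\end{equation*}
Substituting, using the independence of $S$ and $S'$, and invoking Lemma \ref{lem:moments} twice, I would get $\mathrm{E}[(Z_N^{\varepsilon}(\beta))^2]\leqslant e^{2\beta+2\delta|\beta|}(1+o(1))$; letting $\delta\downarrow 0$ yields $\limsup_N\mathrm{E}[(Z_N^{\varepsilon}(\beta))^2]\leqslant e^{2\beta}$. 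The matching lower bound is $\mathrm{E}[(Z_N^{\varepsilon}(\beta))^2]\geqslant(\mathrm{E} Z_N^{\varepsilon}(\beta))^2\to e^{2\beta}$ by Cauchy--Schwarz, so $\mathrm{Var}(Z_N^{\varepsilon}(\beta))\to 0$ and the conclusion follows. The main obstacle is that this step requires Lemma \ref{lem:Z:Jensen} with parameter $2\beta$, hence the stronger assumption $\beta\leqslant 0$ or $4\kappa\beta\varepsilon<1$.

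The remaining range $2\kappa\beta\varepsilon<1\leqslant 4\kappa\beta\varepsilon$, which forces $\beta>0$, is absorbed by a monotonicity reduction. Set $\varepsilon_0:=\varepsilon/2$, so that $4\kappa\beta\varepsilon_0=2\kappa\beta\varepsilon<1$: the previous paragraph then gives $Z_N^{\varepsilon_0}(\beta)\to e^\beta$ in probability. Since $Z_N^{\varepsilon_0}(\beta)\leqslant Z_N^{\varepsilon}(\beta)$ and both means tend to $e^\beta$ by Proposition \ref{prop:EZ}, the nonnegative difference $Z_N^{\varepsilon}(\beta)-Z_N^{\varepsilon_0}(\beta)$ converges to $0$ in $L^1$, and a fortiori in probability, which completes the proof.
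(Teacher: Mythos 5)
Your proof is correct and follows essentially the same route as the paper: establish $L^2$-convergence under the stronger hypothesis $\beta\leqslant 0$ or $4\kappa\beta\varepsilon<1$, then extend to the full range via the nonnegativity of $Z_N^{\varepsilon}(\beta)-Z_N^{\varepsilon/2}(\beta)$ together with Proposition~\ref{prop:EZ}. The paper handles the second moment more economically by observing that $\bigl(Z_N^{\varepsilon}(\beta)\bigr)^2 \leqslant Z_N^{\varepsilon}(2\beta)$ (conditional Jensen, using $\mathbf{1}_{\{L_N^\star\leqslant\varepsilon N\}}^2=\mathbf{1}_{\{L_N^\star\leqslant\varepsilon N\}}$), after which Proposition~\ref{prop:EZ} applied at parameter $2\beta$ closes the argument immediately, making your two-replica construction, the Cauchy--Schwarz decoupling in $q$, and the re-invocation of Lemmas~\ref{lem:Z:Jensen} and~\ref{lem:moments} unnecessary.
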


\begin{proof}
First, we prove that, when $\beta \leqslant 0$ or $4 \kappa \beta \varepsilon < 1$,
\begin{equation}
		Z_N^{\varepsilon} (\beta)
		\overset{L^2 ( \mathrm{P})}{\longrightarrow} {\rm e}^{\beta}
		\qquad\text{as $N\to\infty$} \label{ZL2}.
	\end{equation}
	Because $(Z_N^{\varepsilon} (\beta))^2 \leqslant
	Z_N^{\varepsilon} (2 \beta)$ [Jensen's inequality],
	\begin{equation}
		\mathrm{E}\left(\left|
		Z_N^{\varepsilon} (\beta) - {\rm e}^{\beta}
		\right|^2\right) \leqslant
		\mathrm{E} Z_N^{\varepsilon} (2 \beta) +
		{\rm e}^{2 \beta} - 2 {\rm e}^{\beta}
		\mathrm{E} Z_N^{\varepsilon} (\beta).
	\end{equation}
	The latter quantity goes to 0 as $N\to\infty$, thanks to
	Proposition \ref{prop:EZ}, and this proves (\ref{ZL2}). Now we conclude the proof of the Lemma and assume $\beta \leqslant 0$ or $2 \kappa \beta \varepsilon < 1$. The variable $Z_N^{\varepsilon} (\beta)-Z_N^{\varepsilon/2} (\beta)$ is non-negative and its expectation goes to $0$ as $N\to\infty$, cf.~Proposition~\ref{prop:EZ}. Therefore it converges to $0$ in probability. By (\ref{ZL2}) we know already that $Z_N^{\varepsilon/2} (\beta)\to {\rm e}^{\beta}$ in probability as $N\to\infty$. The conclusion follows.
\end{proof}

\begin{proof}[Proof of Theorem \ref{thm:D}]
	Let us first prove that $(- \infty\,, 1 / \kappa)
	\subseteq\mathscr{D}$. We choose and fix $\beta \in(-\infty\,,1 / \kappa)$. There is $\delta > 0$ such that $2 \kappa\beta(\frac12 + \delta) < 1$. We have seen already that $Z_N (\beta) = Z_N^{(1 / 2) + \delta}
	(\beta)$ for all $N\geqslant(2\delta)^{-1}$, therefore $\beta \in \mathscr{D}$ is a consequence of Lemma~\ref{lem:Z:eps}.

	Next we prove that $\mathscr{D}$ is an interval.  Thanks to the topology of
	$\mathbf{R}$, it suffices to show
	that $\mathscr{D}\cap(0,\infty)$ is connected.
	
	Let us choose and fix $\beta_1, \beta_2 \in
	\mathscr{D}$ such that $0 < \beta_1 < \beta_2$.
	For all $\beta \in (\beta_1\,,\beta_2)$ and $\gamma>1$, 
	$(Z_N (\beta))^{\gamma}
	\leqslant Z_N (\gamma \beta)$, thanks to the conditional
	Jensen inequality. It follows that
	$Z_N (\beta_1)^{\beta / \beta_1} \leqslant Z_N (\beta) \leqslant Z_N
	(\beta_2)^{\beta_2 / \beta}$. We can pass
	to the limit $[N\to\infty]$ to deduce
	that $\beta \in \mathscr{D}$. This implies the connectivity 
	of $\mathscr{D}$, and completes the proof of part 1.

	Assertion 2 of the theorem holds because
	\begin{equation}
		\mathrm{P}_N^{\beta} \left\{ L^{\star}_N \leqslant \varepsilon N
		\right\} =
		\frac{Z^{\varepsilon}_N (\beta)}{Z_N (\beta)},
	\end{equation}
	and $Z^{\varepsilon}_N (\beta) \rightarrow {\rm e}^{\beta}$ in probability for
	all sufficiently small $\varepsilon > 0$ [Lemma \ref{lem:Z:eps}]. 
  
	Finally we demonstrate part 3. 
	Assume first $\beta \nin \mathscr{D}$. For $N$ fixed, the total variation is at least $\mathrm{P}_N^\beta\{L^{\star}_N \leqslant \varepsilon N\}-\mathrm{P}\{L^{\star}_N \leqslant \varepsilon N\}$, which does not converge to $0$ in probability as $N\to\infty$ according to assertion 2 and to Lemma~\ref{lem:LTstar}.
	
	Now we consider  $\beta \in \mathscr{D}$ and 
	$\varepsilon > 0$ such that $4 \kappa \beta^+ \varepsilon < 1$,
	and consider some event $A$ that might depend on all $\{S_i\}_{i=0}^\infty$ and $\{q_i\}_{i=0}^\infty$. We have
	\begin{equation}
		\left| \mathrm{P}_N^\beta\left(A\right)-\mathrm{P}\left(A\left|q_0,\ldots,q_{N-1}\right.\right)
		\right| \leqslant 
		 d_1 + d_2
	\end{equation}
	where
	\begin{equation}\begin{split}
	d_1&:=\left| \mathrm{P}_N^\beta\left(A\cap\{L^{\star}_N \leqslant \varepsilon N\}\right)-\mathrm{P}\left(\left. A\cap\{L^{\star}_N \leqslant \varepsilon N\}\right|q_0,\ldots,q_{N-1} \right)
		\right|,\text{ and}\\
	d_2&:=\mathrm{P}_N^\beta\left(\{L^{\star}_N > \varepsilon N\}\right) + \mathrm{P}\left(\left. \{L^{\star}_N > \varepsilon N\}\right|q_0,\ldots,q_{N-1} \right).
 \end{split}
	\end{equation}
	According to assertion 2 and to Lemma~\ref{lem:LTstar}, $d_2\to 0$ in probability as $N\to\infty$. So it suffices to prove that $d_1\to0$ in probability as $N\to\infty$, uniformly in $A$.
	It follows from the definition of $\mathrm{P}_N^\beta$ that
	\begin{equation}\begin{split}
	d_1&\leqslant \mathrm{E}\left[ \left. \left|	
	\frac{\exp \left( \beta H_N/N \right) }{Z_N(\beta)}
-	1 \right| \mathbf{1}_{A\cap\{L^{\star}_N \leqslant \varepsilon N\}}
	  \right| q_0,
	q_1, \ldots, q_{N - 1} \right] \\
	&\leqslant
	{ \mathrm{E}\left[ \left. \left|	
	\frac{\exp \left( \beta H_N/N \right) }{Z_N(\beta)}
-	1 \right|^2 \mathbf{1}_{\{L^{\star}_N \leqslant \varepsilon N\}}
	  \right| q_0,
	q_1, \ldots, q_{N - 1} \right] }^{1/2}\\
	&=  \left[\frac{Z^\varepsilon_N(2\beta)}{Z_N(\beta)^2}
-2\frac{Z^\varepsilon_N(\beta)}{Z_N(\beta)}
+\mathrm{P}\left(\left. \{L^{\star}_N \leqslant \varepsilon N\}\right|q_0,\ldots,q_{N-1} \right)\right]^{1/2}.
	\end{split}
	\end{equation}
And the latter quantity, which does not depend on $A$, goes to zero in probability as $N\to\infty$;
	see Lemma \ref{lem:Z:eps} and Lemma \ref{lem:LTstar}.
\end{proof}

Finally we prove the invariance principle of the introduction.

\begin{proof}[Proof of Corollary \ref{cor:BM}]
	Theorem \ref{thm:D} implies that $\mathrm{E}^{\beta}_N
	\left[ \Phi \left( \mathscr{S}_N \right) \right] - \mathrm{E} [\Phi (
	\mathscr{S}_N)]$ converges in probability to zero, as $N\to\infty$. 
	And, according to Donsker's invariance
	principle, $\mathrm{E} [\Phi ( \mathscr{S}_N)] \to \mathrm{E}
	[\Phi ( \mathscr{B})]$. The corollary follows immediately from
	these observations.
\end{proof}

\subsection{The existence of free energy (proof of Proposition \ref{prop:F})}

In this section we show that the normalized energy $H_N / N$ is subadditive,
and then conclude Proposition \ref{prop:F} from that fact.

\begin{lemma}\label{lem:subadd}
	Let $N_1, N_2 \geqslant 1$ and $\tilde{q} :=\{q_{N + i} \}_{i = 0}^{\infty}$,
	$\tilde{S} :=\{S_{N_1 + i} - S_{N_1} \}_{i = 0}^{\infty}$, $\tilde{Q}_N^x :=
	\sum_{i = 0}^{N - 1} \tilde{q}_i \tmmathbf{1}_{\{\tilde{S}_i = x\}}$, and
	$\tilde{H}_N := \sum_{x \in \mathbf{Z}^d} ( \tilde{Q}_N^x)^2$. Then,
	\begin{equation}\label{eq:subadd}
		\frac{H_{N_1 + N_2}}{N_1 + N_2} \leqslant  \frac{H_{N_1}}{N_1} +
		\frac{\tilde{H}_{N_2}}{N_2} \qquad\text{a.s.\ $[\mathrm{P}]$.}
	\end{equation}
	Furthermore, $H_{N_1}$ and $H_{N_2}$ are conditionally
	independent, given $\{q_i\}_{i=0}^\infty$, and
	the conditional distribution of $\tilde{H}_{N_2}$ is the same as the
	conditional distribution of $H_{N_2}$ given
	the charges $\tilde{q}$.
\end{lemma}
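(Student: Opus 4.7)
The plan is to reduce subadditivity to a pointwise inequality on the squared total charges after decomposing the walk into its pre-$N_1$ and post-$N_1$ pieces, and to derive the conditional independence from the usual Markov/independent-increments property of the simple random walk.

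\textbf{Step 1: Decomposition of $Q^x$.} I would start by writing, for every $x \in \mathbf{Z}^d$,
\begin{equation*}
Q_{N_1+N_2}^x = \sum_{0\leqslant i < N_1} q_i \mathbf{1}_{\{S_i = x\}} + \sum_{N_1\leqslant i < N_1+N_2} q_i \mathbf{1}_{\{S_i = x\}}.
\end{equation*}
Using $S_i = S_{N_1} + \tilde{S}_{i-N_1}$ for $i \geqslant N_1$ and $\tilde{q}_j = q_{N_1+j}$, the second sum is exactly $\tilde{Q}_{N_2}^{x - S_{N_1}}$. Hence $Q_{N_1+N_2}^x = Q_{N_1}^x + \tilde{Q}_{N_2}^{x - S_{N_1}}$.

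\textbf{Step 2: The pointwise inequality.} Apply the elementary weighted inequality
\begin{equation*}
(a+b)^2 \leqslant \frac{N_1+N_2}{N_1}\, a^2 + \frac{N_1+N_2}{N_2}\, b^2 \qquad (a,b\in\mathbf{R}),
\end{equation*}
which is just $(a+b)^2 = a^2 + 2ab + b^2$ combined with $2ab \leqslant (N_2/N_1) a^2 + (N_1/N_2) b^2$. Summing over $x \in \mathbf{Z}^d$ and using that the shift $x \mapsto x - S_{N_1}$ is a bijection of the lattice, we get
\begin{equation*}
H_{N_1+N_2} \leqslant \frac{N_1+N_2}{N_1} H_{N_1} + \frac{N_1+N_2}{N_2} \tilde{H}_{N_2},
\end{equation*}
and dividing through by $N_1+N_2$ yields \eqref{eq:subadd}.

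\textbf{Step 3: Conditional independence and distribution identity.} Here I would invoke the Markov property: conditionally on $\{q_i\}_{i=0}^\infty$, the walk $\{S_i\}$ is unchanged, and the increments $\{S_{N_1+j}-S_{N_1+j-1}\}_{j\geqslant 1}$ are independent of $\{S_0,\ldots,S_{N_1}\}$ and i.i.d.\ with the same law as $S_1 - S_0$. Therefore $\tilde{S}=\{S_{N_1+i}-S_{N_1}\}_{i\geqslant 0}$ is itself a simple symmetric random walk starting at the origin, independent of $\{S_i\}_{i=0}^{N_1}$. Since $H_{N_1}$ is a function of $(q_0,\ldots,q_{N_1-1}; S_0,\ldots,S_{N_1-1})$ and $\tilde{H}_{N_2}$ is a function of $(\tilde{q}; \tilde{S}) = (q_{N_1},q_{N_1+1},\ldots; \tilde{S})$, the two are conditionally independent given the full charge sequence, and the conditional law of $\tilde{H}_{N_2}$ given $\{q_i\}$ coincides with the conditional law of $H_{N_2}$ given $\tilde{q}$.

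\textbf{Main obstacle.} No step presents a genuine difficulty; the only thing to get right is the choice of weights $(N_1+N_2)/N_i$ in Step 2, which is forced by the requirement that dividing by $N_1+N_2$ produce the normalizations $H_{N_1}/N_1$ and $\tilde{H}_{N_2}/N_2$.
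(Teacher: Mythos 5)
Your proof is correct and follows essentially the same route as the paper's. The decomposition $Q_{N_1+N_2}^x = Q_{N_1}^x + \tilde{Q}_{N_2}^{x - S_{N_1}}$ is exactly the paper's (modulo a sign typo there on the shift, which is immaterial once one sums over $x$), and your weighted inequality $(a+b)^2 \leqslant \frac{N_1+N_2}{N_1}a^2 + \frac{N_1+N_2}{N_2}b^2$ is precisely what the paper obtains from the convexity of $x\mapsto x^2$; the independence step is the standard independent-increments argument the paper invokes in one line.
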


\begin{proof}
	Clearly,
	\begin{equation}
		Q_{N_1 + N_2}^x = Q_{N_1}^x + \tilde{Q}_{N_2}^{x + S_{N_1}}
		\qquad\text{for every $x\in\mathbf{Z}^d$}.
	\end{equation}
	Therefore, the convexity of $h(x):= x^2$ implies that
	\begin{equation}
		\frac{1}{N_1 + N_2} \left( Q_{N_1 + N_2}^x \right)^2 \leqslant 
		\frac{1}{N_1}  \left( Q_{N_1}^x \right)^2 + \frac{1}{N_2}  \left(
		\tilde{Q}_{N_2}^{x + S_{N_1}} \right)^2 .
	\end{equation}
	We can sum the preceding over all $x \in \mathbf{Z}^d$ to deduce
	\eqref{eq:subadd}. In addition, the conditional
	distribution of $\tilde{H}_{N_2}$, given the charges $\tilde{q}$, 
	depends only
	on the distribution of $\tilde{S}$, which is the law of a simple random walk.
\end{proof}

\begin{proof}[Proof of Proposition \ref{prop:F}]
	Let
	\begin{equation}\label{eq:FN}
		\digamma^q_N (\beta) := \frac{1}{N} \ln Z_N \left( \beta \right)
		:= \left. \frac{1}{N} \ln \mathrm{E} \left[ \exp \left( \beta
		\frac{H_N}{N} \right) \,\right|\, q_0, q_1, \ldots, q_{N - 1}
		\right]
	\end{equation}
	denote the free energy corresponding to a finite and fixed $N\geqslant 1$
	and to a given realization of the charges $q:=\{q_i\}_{i=0}^\infty$. 
	
	By the conditional Jensen's inequality,
	\begin{equation}
		\liminf_{N\to\infty} 
		\mathrm{E} \left[\digamma_N^q \left( \beta \right) \right]\geqslant 
		\beta \lim_N \mathrm{E} \left(  \frac{H_N}{N^2}  \right) = 0, \label{King1}
	\end{equation}
  	since as $N\to\infty$,
	\begin{equation}
		\mathrm{E} H_N = N \tmop{Var} (q_0) + ( \mathrm{E} q_0)^2 \mathrm{E}
		\sum_{x \in \mathbf{Z}^d} (L_N^x)^2 =o(N^2);
	\end{equation}
	see Lemma \ref{lem:moments}. This proves that if $\digamma(\beta)$ exists
	[as the proposition asserts]
	and is nonrandom, then certainly $\digamma(\beta)\geqslant 0$.
	
	Now we prove convergence. 
	
	According to Lemma \ref{lem:subadd}, for every fixed $N_1,
	N_2 \geqslant 1$, we can bound $\digamma^q_{N_1 + N_2} (\beta)$
	from above by
	\begin{align}\nonumber
		 & \left. \frac{1}{N_1 + N_2}
			\ln \mathrm{E} \left[ \exp
			\left( \beta \frac{H_{N_1}}{N_1} \right) \times
			\exp \left( \beta \frac{\tilde{H}_{N_2}}{N_2} \right)
			\,\right|\, q_0, q_1, \ldots,
			q_{N_1 + N_2 - 1} \right]\\
		&\hskip1.5in= \frac{1}{N_1 + N_2} \left( N_1 \digamma^q_{N_1} (\beta) + N_2
			\digamma_{N_2}^{\tilde{q}} (\beta) \right).
	\end{align}
	Because $\digamma_1^q (\beta) = q_0^2$ has a
	finite expectation and because of the minoration (\ref{King1}),
	Kingman's subadditive ergodic theorem \cite{K68,K73}
	tells us that $\digamma^q_N(\beta)$ converges a.s.\ and in $L^1(\mathrm{P})$.
	In particular,
	\begin{equation}
		\digamma (\beta) = \lim_{N\to\infty} 
		\frac{1}{N}  \mathrm{E} \ln Z_N \left( \beta
		\right) .  \label{eq:Fking}
	\end{equation}
	The monotonicity and the convexity of $\beta \mapsto N^{- 1} \ln Z_N
	(\beta)$, and hence of $\digamma$, follow respectively from
	the following relations:
	\begin{equation}\label{eq:F'F''}\begin{split}
		\frac{\mathd}{\mathd \beta} \left(
			\digamma^q_N(\beta)\right) & =
			\frac{Z_N'(\beta)}{NZ_N(\beta)}=
			\mathrm{E}_N^{\beta} \left(  \frac{H_N}{N^2}  \right);\\
		\frac{\mathd^2}{\mathd \beta^2}  \left(
			\digamma^q_N(\beta)\right) 
			&= \frac{Z_N''(\beta)Z_N(\beta)-
			\left[Z_N'(\beta)\right]^2}{N\left[Z_N(\beta)\right]^2}=
			\tmop{Var}_{\mathrm{P}_N^{\beta}}
			\left(  \frac{H_N}{N^{3/2}}  \right);
	\end{split}\end{equation}
	together with the fact that both of these quantities 
	are nonnegative.
\end{proof}

\subsection{The first-order phase transition (proof of Theorem \ref{thm:fo})}

Our proof of Theorem \ref{thm:fo} requires three preliminary Lemmas.

\begin{lemma}\label{lem:jump}
	For all $\beta> 0$ and $\varepsilon,\eta>0$,
	\begin{equation}
		\mathrm{P}_N^{\beta} \left\{ \varepsilon<
		\frac{L^{\star}_N}{N} \leqslant 
		\frac{1-\eta}{2 \kappa \beta}\right\}
		\overset{\mathrm{P}}{\longrightarrow}  0
		\qquad\text{as $N\to\infty$}.
	\end{equation}
\end{lemma}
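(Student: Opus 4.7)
\textbf{Proof proposal for Lemma \ref{lem:jump}.} The plan is to express the probability in question as a ratio involving truncated partition functions, and then to exploit the fact that both endpoints of the specified interval for $L_N^\star / N$ lie strictly below the threshold $1/(2\kappa\beta)$, so that Lemma~\ref{lem:Z:eps} applies to both the upper and lower cutoffs. Then a telescoping cancellation gives the desired vanishing numerator.

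First I would observe that we may assume $\varepsilon < b := (1-\eta)/(2\kappa\beta)$, since otherwise the event $\{\varepsilon < L_N^\star / N \leqslant b\}$ is empty and the claim is trivial. Next, directly from the definitions \eqref{eq:Zeps} of $Z_N^{\varepsilon}$ and $Z_N^{b}$, and the disjoint decomposition $\{L_N^\star \leqslant bN\} = \{L_N^\star \leqslant \varepsilon N\} \sqcup \{\varepsilon N < L_N^\star \leqslant bN\}$, we have
\begin{equation}
\mathrm{P}_N^{\beta}\left\{\varepsilon < \frac{L_N^\star}{N} \leqslant b\right\} \;=\; \frac{Z_N^{b}(\beta) - Z_N^{\varepsilon}(\beta)}{Z_N(\beta)}.
\end{equation}

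Now the choice of $b$ gives $2\kappa\beta b = 1-\eta < 1$, and since $\varepsilon < b$ we also have $2\kappa\beta\varepsilon < 1$. Hence Lemma~\ref{lem:Z:eps} applies to both truncated partition functions, yielding
\begin{equation}
Z_N^{b}(\beta)\;\overset{\mathrm{P}}{\longrightarrow}\;{\rm e}^{\beta} \qquad\text{and}\qquad Z_N^{\varepsilon}(\beta)\;\overset{\mathrm{P}}{\longrightarrow}\;{\rm e}^{\beta} \qquad\text{as }N\to\infty.
\end{equation}
By continuous mapping, the numerator $Z_N^{b}(\beta) - Z_N^{\varepsilon}(\beta)$ converges to $0$ in probability. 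For the denominator, the trivial bound $Z_N(\beta) \geqslant Z_N^{\varepsilon}(\beta)$ together with the convergence above shows that $Z_N(\beta)$ is bounded below in probability by a positive constant (any number strictly less than ${\rm e}^{\beta}$ works). The ratio therefore tends to $0$ in probability, completing the proof.

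There is no serious obstacle here: the argument is entirely a consequence of Lemma~\ref{lem:Z:eps}, and the key conceptual point is that the subgaussian moment estimate of Proposition~\ref{prop:EZ} is tight enough to pin $\mathrm{E} Z_N^{\varepsilon}(\beta)$ to ${\rm e}^\beta$ throughout the whole range $0 < \varepsilon < 1/(2\kappa\beta)$, so that the annular contribution from $\varepsilon N < L_N^\star \leqslant bN$ is forced to be $o(1)$. The slight subtlety is that we need convergence in probability (not just in $L^1$) of \emph{both} $Z_N^{\varepsilon}$ and $Z_N^{b}$, which is exactly the content of Lemma~\ref{lem:Z:eps}.
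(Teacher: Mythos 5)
Your proposal is correct and follows essentially the same approach as the paper: both express the probability as $\bigl(Z_N^{b}(\beta)-Z_N^{\varepsilon}(\beta)\bigr)/Z_N(\beta)$ and then show the numerator vanishes. The only cosmetic difference is that the paper bounds the denominator by the trivial inequality $Z_N(\beta)\geqslant 1$ and then invokes Proposition~\ref{prop:EZ} directly (convergence of expectations, giving $L^1$ and hence in-probability convergence), whereas you invoke Lemma~\ref{lem:Z:eps} and control the denominator via $Z_N(\beta)\geqslant Z_N^{\varepsilon}(\beta)$; both routes work.
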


\begin{proof} We assume of course that $\varepsilon<(1-\eta)/(2 \kappa \beta)$.
	Because $Z_N (\beta) \geqslant 1$,
	\begin{equation}\begin{split}
		\mathrm{E}  \left[\mathrm{P}_N^{\beta} \left\{ 
			\varepsilon < \frac{L^{\star}_N}{N} \leqslant
			\frac{1-\eta}{2 \kappa \beta}  \right\}
			\right]
			&= \mathrm{E}\left[\frac{Z_N^{(1-\eta) / (2 \kappa \beta)} (\beta) -
			Z_N^{\varepsilon} (\beta)}{Z_N (\beta)}\right]\\
		&\leqslant  \mathrm{E} \left[ Z_N^{(1-\eta) / (2 \kappa \beta)}
			(\beta) - Z_N^{\varepsilon} (\beta) \right].
	\end{split}\end{equation}
	This proves the lemma because according to Proposition \ref{prop:EZ}
	the preceding converges to zero as $N\to\infty$.
\end{proof}

\begin{lemma}\label{lem:densF}
	If $\mathrm{E} (q_0^2) = 1$, then for all $\varepsilon,\beta>0$,
	\begin{equation}
		\mathrm{P}_N^{\beta} \left\{
		\frac{L^{\star}_N}{N} \geqslant \frac{\digamma
		(\beta)}{\beta} - \varepsilon \right\}
		\overset{\mathrm{P}}{\longrightarrow}  1
		\qquad\text{as $N\to\infty$}.
	\end{equation}
\end{lemma}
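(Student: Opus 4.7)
The plan is to bound the truncated partition function $Z_N^y(\beta)$ deterministically in terms of $\sum_{i=0}^{N-1} q_i^2$ for $y := \digamma(\beta)/\beta - \varepsilon$, and then compare with the full $Z_N(\beta)$ via the almost sure convergence from Proposition \ref{prop:F}. The only nontrivial input is a Cauchy--Schwarz manipulation of the energy.

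First, for any realization of $S$ and any $x \in \mathbf{Z}^d$, applying Cauchy--Schwarz to the defining sum \eqref{eq:Q} gives
\[
(Q_N^x)^2 = \Big(\sum_{0 \leqslant i < N,\, S_i = x} q_i\Big)^2 \leqslant L_N^x \sum_{0 \leqslant i < N,\, S_i = x} q_i^2.
\]
Summing over $x$ and exchanging the order of summation yields
\[
H_N \;\leqslant\; \sum_{i=0}^{N-1} q_i^2 \, L_N^{S_i} \;\leqslant\; L_N^{\star} \sum_{i=0}^{N-1} q_i^2.
\]
This deterministic sandwich is the heart of the argument.

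Next, I would fix $y := \digamma(\beta)/\beta - \varepsilon$, noting that the conclusion is trivial when $y \leqslant 0$, so assume $y > 0$. On the event $\{L_N^{\star} \leqslant yN\}$ the above bound gives $\beta H_N/N \leqslant \beta y \sum_{i=0}^{N-1} q_i^2$, and hence, with $Z_N^y$ as defined in \eqref{eq:Zeps},
\[
Z_N^y(\beta) \;\leqslant\; \exp\Big(\beta y \sum_{i=0}^{N-1} q_i^2\Big).
\]

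Finally, combine two almost sure convergences: the strong law of large numbers gives $\sum_{i=0}^{N-1} q_i^2 = N + o(N)$ a.s.\ (since $\mathrm{E} q_0^2 = 1$), while Proposition \ref{prop:F} gives $\ln Z_N(\beta) = N \digamma(\beta) + o(N)$ a.s. Therefore a.s.\ as $N \to \infty$,
\[
\mathrm{P}_N^{\beta}\{L_N^{\star} \leqslant yN\} \;=\; \frac{Z_N^y(\beta)}{Z_N(\beta)} \;\leqslant\; \exp\bigl(N(\beta y - \digamma(\beta)) + o(N)\bigr) \;=\; \exp\bigl(-\beta \varepsilon N + o(N)\bigr) \;\longrightarrow\; 0.
\]
Since $\{L_N^{\star}/N < y\} \subseteq \{L_N^{\star} \leqslant yN\}$, the complementary event has $\mathrm{P}_N^{\beta}$-probability tending to $1$ almost surely, and in particular in probability, which is the claim. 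The main obstacle is essentially nonexistent once the Cauchy--Schwarz step is identified; a minor bookkeeping point is to dispose of the $y \leqslant 0$ case separately as trivial, and to observe that the argument actually yields exponentially fast convergence.
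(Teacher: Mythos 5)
Your proof is correct and uses essentially the same ingredients as the paper's: the Cauchy--Schwarz bound $H_N \leqslant L_N^\star \sum_{i<N} q_i^2$, the free-energy asymptotics $\ln Z_N(\beta) = N\digamma(\beta) + o(N)$ from Proposition~\ref{prop:F}, and the strong law for $\sum q_i^2$. The only organizational difference is that the paper first proves the intermediate concentration $\mathrm{P}_N^\beta\{H_N/N^2 \geqslant \digamma(\beta)/\beta - \varepsilon\} \to 1$ via a Chebyshev-type bound against $Z_N$, and then transfers it to $L_N^\star$ through the Cauchy--Schwarz inequality; you instead bound the truncated partition function $Z_N^y(\beta)$ directly on the event $\{L_N^\star \leqslant yN\}$, which amounts to folding those two steps into one. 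Both are fine; yours is a minor rearrangement of the same argument.
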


\begin{proof}
	Whenever we have $H_N/N^2 \leqslant -\varepsilon+[\digamma(\beta)/\beta]$,
	then we certainly have
	$\exp(\beta H_N/N)\le\exp(N\digamma(\beta)-\beta
	\varepsilon N).$ Therefore,
	\begin{equation}
		\mathrm{P}^{\beta}_N \left\{ \frac{H_N}{N^2}
		\leqslant \frac{\digamma (\beta)}{\beta} - 
		\varepsilon \right\} \leqslant  \frac{\mathrm{e}^{N
		\digamma( \beta) -\beta\varepsilon N}}{Z_N (\beta)}.
	\end{equation}
	It follows from Proposition \ref{prop:F} that for every $\varepsilon>0$,
	\begin{equation}
		\mathrm{P}^{\beta}_N \left\{ \frac{H_N}{N^2} \geqslant \frac{\digamma
		(\beta)}{\beta} - \varepsilon \right\}
		\overset{\mathrm{P}}{\longrightarrow}  1
		\quad\text{as $N\to\infty$}.
		\label{eq:densF}
	\end{equation}
	Next we prove that the preceding implies the result.
	
	In accord with the Cauchy--Schwarz inequality,
	\begin{equation}\begin{split}
		(Q_N^x)^2  &\leqslant  \left(
			\sum_{i = 1}^N q_i^2 \tmmathbf{1}_{\{S_i =
			x\}} \right) \times L_N^x\qquad\text{for all $x\in\mathbf{Z}^d$}\\
		&\leqslant  \left(
			\sum_{i = 1}^N q_i^2 \tmmathbf{1}_{\{S_i =
			x\}} \right) \times L_N^\star.
	\end{split}\end{equation}
	We sum this inequality over $x \in
	\mathbf{Z}^d$ to find that
	\begin{equation}\label{eq:H:Lstar}
		H_N \leqslant L^{\star}_N \cdot\sum_{i = 1}^N q_i^2.
	\end{equation}
	The lemma follows from \eqref{eq:densF} and the law of large numbers.
\end{proof}

\begin{lemma}\label{lem:LH}
	For every $\varepsilon, \beta > 0$ and $0<\delta < I
	(\varepsilon) / \beta$,
	\begin{equation}
		\mathrm{P}_N^{\beta} \left\{ H_N \leqslant \delta N^2,\,
		L^{\star}_N \geqslant \varepsilon N \right\}
		\overset{\mathrm{P}}{\longrightarrow} 0
		\quad\text{as $N\to\infty$}.
	\end{equation}
\end{lemma}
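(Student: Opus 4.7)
The plan is to control the annealed expectation of the probability in question and deduce convergence in probability via Markov's inequality. Fix $\varepsilon, \beta > 0$ and $\delta \in (0\,, I(\varepsilon)/\beta)$, and write $A_N := \{H_N \leqslant \delta N^2,\ L_N^\star \geqslant \varepsilon N\}$. On $A_N$ the Gibbs weight satisfies $\exp(\beta H_N/N) \leqslant \exp(\beta\delta N)$. Since $L_N^\star$ depends only on the walk $S$ and not on the charges $q$, this yields
\begin{equation*}
	\mathrm{P}_N^\beta(A_N) \ \leqslant\ \frac{\mathrm{e}^{\beta\delta N}}{Z_N(\beta)} \cdot \mathrm{P}\{L_N^\star \geqslant \varepsilon N\}.
\end{equation*}
Because $H_N \geqslant 0$ and $\beta > 0$, one has $Z_N(\beta) \geqslant 1$ almost surely, so upon taking $\mathrm{P}$-expectation I obtain
\begin{equation*}
	\mathrm{E}\bigl[\mathrm{P}_N^\beta(A_N)\bigr] \ \leqslant\ \mathrm{e}^{\beta\delta N}\,\mathrm{P}\{L_N^\star \geqslant \varepsilon N\}.
\end{equation*}

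The second step is to bound $\mathrm{P}\{L_N^\star \geqslant \varepsilon N\}$ by a large-deviation estimate. Since the walk at time $N$ lies in a ball of radius $N$, a union bound over the at most $(2N+1)^d$ possible sites, together with the strong Markov property applied at the first visit of $S$ to $x$ (which identifies $L_N^x$ in law with the local time at $0$ of a shorter walk), yields
\begin{equation*}
	\mathrm{P}\{L_N^\star \geqslant \varepsilon N\} \ \leqslant\ (2N+1)^d\,\mathrm{P}\{L_N^0 \geqslant \varepsilon N\}.
\end{equation*}
By the definition of the rate function $I$ (supplied by Lemma~\ref{lem:LT0}), the right-hand side is of order $\exp(-N I(\varepsilon) + o(N))$, and the polynomial prefactor is absorbed.

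Combining the two steps, I get $\mathrm{E}[\mathrm{P}_N^\beta(A_N)] \leqslant \exp(N[\beta\delta - I(\varepsilon) + o(1)])$, which tends to $0$ because $\beta\delta < I(\varepsilon)$ by assumption. This $L^1(\mathrm{P})$ convergence of the nonnegative random variables $\mathrm{P}_N^\beta(A_N)$ implies the claimed convergence in probability. There is no serious obstacle here: the only mild subtlety is relating the large-deviation estimate for $L_N^\star$ to the single-site estimate for $L_N^0$, but a naive union bound over the range of the walk loses only a polynomial factor, which is harmless compared with the exponential gap $I(\varepsilon) - \beta\delta > 0$.
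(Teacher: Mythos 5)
Your proof is correct and is essentially the paper's argument: bound $\exp(\beta H_N/N)$ by $\exp(\beta\delta N)$ on the event, use $Z_N(\beta)\geqslant 1$ a.s., and control $\mathrm{P}\{L_N^\star\geqslant\varepsilon N\}$ by the large-deviation estimate --- the paper simply invokes Lemma~\ref{lem:LTstar}, which already packages your union-bound and stochastic-domination step. The paper also observes that since the resulting bound $\mathrm{e}^{\beta\delta N}\,\mathrm{P}\{L_N^\star\geqslant\varepsilon N\}$ is a deterministic sequence vanishing exponentially, one in fact gets almost-sure convergence to zero; your detour through $L^1(\mathrm{P})$ and Markov is fine but yields only the stated convergence in probability.
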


\begin{proof}
	According to Lemma \ref{lem:LTstar},
	\begin{equation}\begin{split}
		&\limsup_{N\to\infty} \frac{1}{N} \ln \mathrm{E} \left[\left. 
			\mathrm{e}^{\beta H_N/N}\tmmathbf{1}_{\{H_N \leqslant \delta N^2
			,\, L^{\star}_N \geqslant \varepsilon N\}} \,\right|\, q_0\,, \ldots,
			q_{N - 1} \right]\\
		&\hskip3in\leqslant  \beta \delta - I (\varepsilon) < 0,
	\end{split}\end{equation}
	almost surely. Because $Z_N (\beta)\geqslant 1$,
	$\mathrm{P}_N^{\beta}\{ H_N \leqslant \delta N^2,\,
	L^{\star}_N \geqslant \varepsilon N\}$ is a.s.\
	bounded above by the conditional
	expectation in the preceding display.
\end{proof}

\begin{proof}[Proof of Theorem \ref{thm:fo}]
For all $\beta\in{\mathbf{R}}$ we define
	\begin{equation}
		\gamma(\beta) := \lim_{\varepsilon\downarrow 0}
		\limsup_{N\to\infty}
		\mathrm{E}\left[\mathrm{P}_N^{\beta} \left\{ \frac{L^{\star}_N}{N}
		\geqslant \varepsilon \right\}\right].
	\end{equation}
Theorem \ref{thm:D} shows that $\gamma(\beta)>0$ if and only if $\beta\nin \mathscr{D}$.
We will prove that, for all $\beta>0$,
	\begin{equation}
		\lim_{\delta\downarrow0} \frac{\digamma(\beta+\delta)-\digamma(\beta)}{\delta}
		 \geqslant 
		  \frac{\gamma(\beta)}{\beta} I \left( \frac{1}{2 \kappa \beta} \right). \label{eq:lwbFp}
	\end{equation}
Before we address the proof of (\ref{eq:lwbFp}), we explain how it implies (\ref{eq:fo}). For any $\beta>\beta_c$, we have $\gamma(\beta)>0$ [Theorem \ref{thm:D}] and therefore a consequence of (\ref{eq:lwbFp}) is that $\digamma(\beta)>0$, for all $\beta>\beta_c$. Then, from Lemma~\ref{lem:densF} it follows that $\digamma(\beta)>0 \Rightarrow \gamma(\beta)=1$, therefore $\gamma(\beta)=1$ for all $\beta>\beta_c$, and reporting in (\ref{eq:lwbFp}) yields the positive slope of $\digamma$ at the critical point, that is (\ref{eq:fo}).  
Eq.\ \eqref{eq:prop:min} follows from the fact that $\digamma(\beta)>0$ for all $\beta>\beta_c$, together with Lemmas \ref{lem:densF} and \ref{lem:jump}.
	
Now we turn to the proof of (\ref{eq:lwbFp}). We fix $\beta>0$ and $\varepsilon>0$. 
According to Lemma \ref{lem:jump} we have as well
\begin{equation}
		 \limsup_{N\to\infty} \mathrm{E}
		\left[ \mathrm{P}_N^{\beta} \left\{\frac{L^{\star}_N}{N}
		\geqslant \frac{1 - \varepsilon}{2 \kappa \beta} \right\}\right] = \gamma(\beta) .
	\end{equation}
Since  $Z_N (\beta)$ and $Z_N^{\varepsilon} (\beta)$ are nondecreasing
	functions of $\beta$,
	\begin{equation}\begin{split}
		&\inf_{\eta \in [0, \delta]} \mathrm{P}_N^{\beta + \eta} \left\{
			\frac{L^{\star}_N}{N} > \frac{1 - \varepsilon}{2 \kappa \beta} \right\}\\
		&\hskip1in\geqslant 1 - 
			\frac{Z_N^{(1- \varepsilon)/(2 \kappa \beta)} (\beta +
			\delta)}{Z_N (\beta)}\\
		&\hskip1in\geqslant  
		\mathrm{P}_N^{\beta} \left\{ \frac{L^{\star}_N}{N} >
		\frac{1- \varepsilon }{2 \kappa
		\beta} \right\} - \varepsilon,
	\end{split}\end{equation}
	almost surely on $\mathcal{T}^\delta_N$ where
		\begin{equation}
		\mathcal{T}^\delta_N  :=  \left\{ 
		\frac{Z_N^{(1- \varepsilon)/(2 \kappa \beta)}(\beta + \delta)-Z_N^{(1- \varepsilon)/(2 \kappa\beta)}(\beta)}{Z_N(\beta)} 
		\leqslant \varepsilon 
		\right\}.
	\end{equation}
	According to Lemma \ref{lem:Z:eps}, for all $\delta>0$ small enough, $Z_N^{(1- \varepsilon)/(2 \kappa\beta)}(\beta) \to {\rm e}^\beta$ while
	$Z_N^{(1- \varepsilon)/(2 \kappa\beta)}(\beta+\delta) \to {\rm e}^{\beta+\delta}$ in probability, as $N\to\infty$. Consequently $\mathrm{P}(\mathcal{T}^\delta_N)\to1$ and
	\begin{equation}
		  \limsup_{N\to\infty} \inf_{\eta \in [0, \delta]} \mathrm{E}\left[
		\mathrm{P}_N^{\beta + \eta}
		\left\{\frac{L^{\star}_N}{N}
		\geqslant \frac{1 - \varepsilon}{2 \kappa \beta} \right\}
		\right] \geqslant \gamma(\beta)-\varepsilon
	\end{equation}
	for all $\delta>0$ sufficiently small.	
	In view of Lemma \ref{lem:LH}, this yields also
	\begin{equation}
		  \limsup_{N\to\infty} 
		\inf_{\eta \in [0, \delta]} \mathrm{E}\left[
		\mathrm{P}_N^{\beta + \eta} \left\{
		\frac{H_N}{N^2} \geqslant \frac1\beta I \left( \frac{1- \varepsilon}{2 \kappa \beta} 
		\right) \right\}\right]
		\geqslant \gamma(\beta)-\varepsilon.
	\end{equation}
	Consequently, we can integrate \eqref{eq:F'F''}
	over all $\eta\in(\beta\,,\beta+\delta)$ to see that
	\begin{equation}
		 \limsup_{N\to\infty} \left[ \mathrm{E}\left[\digamma_N (\beta + \delta) \right]- 
		\mathrm{E}\left[\digamma_N (\beta)\right] \right]
		\geqslant \delta \frac{\gamma(\beta)-\varepsilon}\beta
		I\left( \frac{1 - \varepsilon}{2 \kappa
		\beta} \right)
	\end{equation}
	and letting $\varepsilon\to0$  we conclude the proof of (\ref{eq:lwbFp}).	
\end{proof}

\subsection{Energy and the distance to optimality: The four points
(Proofs of Proposition \ref{prop:maxH:bc} and Theorem 
\ref{thm:square})}

The aim of this subsection is to prove Proposition \ref{prop:maxH:bc} and
Theorem \ref{thm:square}. We consider henceforth the following related 
problem: What is the maximum value of
$H_N$ given $q_0, \ldots, q_{N - 1}$, where the maximum is taken over
all possible random walk paths. 

Let us introduce some notation. We say
that $x \in \mathbf{Z}^d$ is \emph{odd} (resp.\ 
\emph{even}) when the sum of its coordinates
is odd (resp.\ even). Given $N \geqslant 1$, $\varepsilon \in \{-\,,+\}$,
and $p \in \{\tmop{odd}\,, \tmop{even}\}$ we define
\begin{equation}
	Q_{\varepsilon}^p := \sum_{\substack{0\leqslant
	i < N :\\i \equiv p}} q_i^{\varepsilon},
	\label{eq:Qep}
\end{equation}
where ``$i \equiv p$'' means that ``$i$ has parity $p$.'' The quantity
$Q_{\varepsilon}^p$ is the total value of charges of sign $\varepsilon$
available at positions of parity $p$. 

Given a realization of $(q\,, S)$ we
define $x_{\varepsilon}^p$ as any one of the points of $\mathbf{Z}^d$ with parity
$p$ such that $\varepsilon Q_N^x$ is maximal (since positions with no charge
exist we always have $\varepsilon Q_N^{x_{\varepsilon}^p} \geqslant 0$).
It is not hard to see that we can ensure that $x_\varepsilon^p$ is always
a random variable [measurable with respect to the sigma-algebra
generated by $(q\,,S)$].

Let us also observe that if there exists a point $x$ of parity 
$p$ such that $\varepsilon Q_N^x > Q_{\varepsilon}^p / 2$, 
then there is a unique choice for $x_{\varepsilon}^p$, namely
$x_\varepsilon^p=x$. 

We may think of
\begin{equation}
	D_N := \mathop{\sum\sum}\limits_{\substack{
	\varepsilon\in\{-,+\}\\p\in\{\text{odd},\text{even}\}}} Q_{\varepsilon}^p \left(
	Q_{\varepsilon}^p - \varepsilon Q_N^{x_{\varepsilon}^p}\right)
	\label{eq:DN}
\end{equation}
as the {\tmem{charge distance to optimality}}. 
Clearly, $D_N\geqslant 0$.

\begin{lemma}\label{lem:HD}{\tmdummy}
	The following are valid for all $N\geqslant 1$:  
	\begin{enumerateroman}
		\item For every $d \geqslant 1$,
		\begin{equation}
			H_N  \leqslant  
			\mathop{\sum\sum}\limits_{\substack{
			\varepsilon\in\{-,+\}\\p\in\{\text{\rm odd},\text{\rm
			even}\}}} 
			(Q_{\varepsilon}^p)^2 - D_N . 
		\label{eq:HDN}
		\end{equation}
	\item For every $d \geqslant 2$,
		\begin{equation}
			\max_S H_N (S)= 
			\mathop{\sum\sum}\limits_{\substack{
			\varepsilon\in\{-,+\}\\p\in\{\text{\rm odd},\text{\rm even}\}}} 
			(Q_{\varepsilon}^p)^2,
		\label{eq:Hmax:srw}
		\end{equation}
		where ``$\max_{_S}$'' refers to the maximum over all possible
		random walk paths.
	\end{enumerateroman}
\end{lemma}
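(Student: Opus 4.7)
\textbf{Proof proposal for Lemma \ref{lem:HD}.}

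The key structural observation I would start from is the parity alignment for the simple random walk: since every step flips the parity of the position, $S_i = x$ forces $i$ and $x$ to have the same parity $p$. Hence $Q_N^x$ depends only on the charges $\{q_i : i\equiv p\}$ when $x\equiv p$, and we have the clean splitting
\begin{equation*}
H_N = \sum_{p\in\{\mathrm{odd},\mathrm{even}\}} \sum_{x\equiv p} (Q_N^x)^2.
\end{equation*}

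\emph{Part (i).} I would rewrite the square using the pointwise identity $y^2 = y^+\cdot y + y^-\cdot(-y) = \sum_{\varepsilon\in\{-,+\}} y^\varepsilon\cdot (\varepsilon y)$ (treating $\varepsilon$ as $\pm 1$), which gives
\begin{equation*}
(Q_N^x)^2 \;=\; \sum_\varepsilon (Q_N^x)^\varepsilon\cdot \varepsilon Q_N^x.
\end{equation*}
Since $(Q_N^x)^\varepsilon\geqslant 0$, and since $(Q_N^x)^\varepsilon = 0$ whenever $\varepsilon Q_N^x < 0$, the factor $\varepsilon Q_N^x$ may be replaced by its upper bound $\varepsilon Q_N^{x_\varepsilon^p}\geqslant 0$ without loss of the inequality. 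Summing over $x\equiv p$,
\begin{equation*}
\sum_{x\equiv p}(Q_N^x)^\varepsilon\cdot \varepsilon Q_N^x \;\leqslant\; \varepsilon Q_N^{x_\varepsilon^p}\cdot \sum_{x\equiv p}(Q_N^x)^\varepsilon.
\end{equation*}
The remaining ingredient is the bound $\sum_{x\equiv p}(Q_N^x)^\varepsilon \leqslant Q_\varepsilon^p$, which follows because for each $x\equiv p$ we have $(Q_N^x)^\varepsilon = (\varepsilon\sum_{i\equiv p, S_i=x} q_i)^+ \leqslant \sum_{i\equiv p, S_i=x} q_i^\varepsilon$, and then the outer sum over $x$ telescopes to $\sum_{i\equiv p} q_i^\varepsilon = Q_\varepsilon^p$. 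Putting these together and summing over $\varepsilon$ and $p$ gives
\begin{equation*}
H_N \;\leqslant\; \sum_{\varepsilon,p} Q_\varepsilon^p\cdot \varepsilon Q_N^{x_\varepsilon^p} \;=\; \sum_{\varepsilon,p}(Q_\varepsilon^p)^2 - D_N,
\end{equation*}
where the equality is just the algebraic rearrangement of the definition \eqref{eq:DN}.

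\emph{Part (ii).} The upper bound is immediate from (i) since $D_N\geqslant 0$. For the matching lower bound I would construct, for $d\geqslant 2$, an explicit walk. Using the first two coordinates, let $a := 0 = S_0$, $b := a+e_1$, $c := a+e_1+e_2$, $d := a+e_2$; so $U := \{a,b,c,d\}$ is a unit square with $a,c$ of even parity and $b,d$ of odd parity. The crucial structural point is that every even-parity vertex of $U$ is adjacent to every odd-parity vertex of $U$, so at each step the walk may freely choose which of the two opposite-parity sites in $U$ to move to. I would designate $x_+^{\mathrm{even}}:=a$, $x_-^{\mathrm{even}}:=c$ if $q_0\geqslant 0$ (else swap), and $x_+^{\mathrm{odd}}:=b$, $x_-^{\mathrm{odd}}:=d$ (either choice is fine, fix one), and then define $S_i$ inductively by sending the walk at time $i$ to the vertex of $U$ with parity $i$ and sign $\mathrm{sgn}(q_i)$ (ties broken arbitrarily). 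This keeps all positive parity-$p$ charges on a single point and all negative parity-$p$ charges on another single point, yielding $H_N = \sum_{\varepsilon,p}(Q_\varepsilon^p)^2$.

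The main obstacle is really just the bookkeeping in part (i)—making sure the replacement $\varepsilon Q_N^x \leadsto \varepsilon Q_N^{x_\varepsilon^p}$ is justified even when $\varepsilon Q_N^x$ is negative—and the compatibility of the initial condition $S_0=0$ with the construction in part (ii), which is resolved by letting the sign convention for $x_\pm^{\mathrm{even}}$ depend on $\mathrm{sgn}(q_0)$. The restriction $d\geqslant 2$ enters only because a unit square requires at least two coordinate directions.
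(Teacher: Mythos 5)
Your proposal is correct and follows essentially the same route as the paper: part (i) is the paper's decomposition-by-parity-and-sign bound, just repackaged via the identity $y^2=\sum_\varepsilon y^\varepsilon\cdot\varepsilon y$ rather than restricting the $x$-sum to $\{\varepsilon Q_N^x>0\}$, and part (ii) is the same explicit unit-square construction (with $S_0=0$ enforced by the $\mathrm{sgn}(q_0)$-dependent labelling), together with the upper bound inherited from part (i) via $D_N\geqslant 0$.
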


\begin{proof}
	In order to prove part 1 we first decompose,
	and then estimate, the energy as follows:
	\begin{align}
			H_N &= \mathop{\sum\sum}\limits_{\substack{
				\varepsilon\in\{-,+\}\\p\in\{\text{\rm odd},\text{\rm even}\}}} 
				 \sum_{\substack{x\in\mathbf{Z}^d:\, 
				 x \equiv p,\\\varepsilon Q_N^x
				> 0}} \left( Q_N^x \right)^2\\
			& \leqslant\mathop{\sum\sum}\limits_{\substack{
				\varepsilon\in\{-,+\}\\p\in\{\text{\rm odd},\text{\rm even}\}}}
				\max_{\substack{x\in\mathbf{Z}^d:\, x \equiv p,\\
				\varepsilon Q_N^x > 0}} \left( \varepsilon Q_N^x \right) 
				\times \sum_{\substack{x\in\mathbf{Z}^d:\, x \equiv p,\\
				\varepsilon Q_N^x > 0}} \varepsilon Q_N^x\\
			& \leqslant  \mathop{\sum\sum}\limits_{\substack{
				\varepsilon\in\{-,+\}\\p\in\{\text{\rm odd},\text{\rm even}\}}}
				\varepsilon Q_N^{x_{\varepsilon}^p}
				\times Q_{\varepsilon}^p.
		\end{align}
	We express the latter in terms of $D_N$ to complete the
	proof of part 1.
	
	Next we demonstrate part 2.
	
	Thanks to part 1 of the lemma,
	\begin{equation}
		\max_S H_N \leqslant\mathop{\sum\sum}\limits_{\substack{
		\varepsilon\in\{-,+\}\\p\in\{\text{\rm odd},\text{\rm even}\}}}
   		 (Q_{\varepsilon}^p)^2
		 \quad\text{for all $d\geqslant 1$}.
	\end{equation}
	Now we assume $d \geqslant 2$ and describe an ``optimal
	trajectory'' in order to establish the second part of the lemma.
	
	In order to be concrete, we will consider the case that
	$q_0 \geqslant 0$; the case that $q_0<0$ can be considered
	similarly. Define
	\begin{alignat}{3}
		\sigma_+^{\tmop{even}} &:=& (0, 0, 0,\ldots),\qquad
		\sigma_-^{\tmop{even}} &:=& (1, 1, 0, \ldots),\\
		\sigma_+^{\tmop{odd}} &:=& (0, 1, 0, \ldots),\qquad
		\sigma_-^{\tmop{odd}} &:=& (1, 0, 0,\ldots).
	\end{alignat}
	[When $q_0<0$, we exchange the roles of
	$\sigma_+^{\tmop{even}}$ and $\sigma_-^{\tmop{even}}$ 
	in the following argument.] Now let us consider the following 
	possible random walk trajectory:
	\begin{equation}
		S_i = \sigma_{\tmop{sgn} (q_i)}^{\tmop{parity} (i)} 
		\quad\text{for $i\geqslant 0$}
	\end{equation}
	A direct inspection shows that: (i)
	$S$ is a realization of the simple random walk; and (ii) this realization
	of the random walk path achieves the maximum energy $\max_{_S}H_N$.
	[In particular, for this realization of the random walk we have
	$x^p_\varepsilon=\sigma^p_\varepsilon$.]
\end{proof}

Our  next Proposition is a ready consequence.

\begin{proposition}\label{prop:M}
	If $d \geqslant 2$, then a.s.\ $[\mathrm{P}]$,
	\begin{equation}
		\lim_{N\to\infty} \max_S \frac{H_N}{N^2} =
		\frac{(\mathrm{E} q_0^+)^2 + (\mathrm{E} q_0^-)^2}{2} \in [0\,, \infty].
	\end{equation}
\end{proposition}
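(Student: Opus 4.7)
The plan is to combine Lemma \ref{lem:HD}(ii) with the strong law of large numbers. Since $d\geqslant 2$, that lemma gives the exact formula
\begin{equation*}
\max_S H_N \;=\; \sum_{\varepsilon\in\{-,+\}}\sum_{p\in\{\mathrm{odd},\mathrm{even}\}} (Q_\varepsilon^p)^2,
\end{equation*}
where by definition \eqref{eq:Qep}, $Q_\varepsilon^p$ is the sum of the $q_i^\varepsilon$ over those $i\in\{0,\ldots,N-1\}$ whose parity equals $p$. So dividing by $N^2$ reduces the statement to a statement about four sums of i.i.d.\ nonnegative random variables.

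For each fixed pair $(\varepsilon,p)$, the variables $\{q_i^\varepsilon : i\equiv p,\ i< N\}$ are i.i.d.\ copies of $q_0^\varepsilon\geqslant 0$, and the cardinality of the index set is $\lfloor N/2\rfloor$ or $\lceil N/2\rceil$, both asymptotic to $N/2$. Thus by the strong law of large numbers (in its extended form, which covers $\mathrm{E}q_0^\varepsilon=+\infty$ as well),
\begin{equation*}
\frac{Q_\varepsilon^p}{N} \;\longrightarrow\; \frac{\mathrm{E}q_0^\varepsilon}{2}\qquad\text{a.s.\ $[\mathrm{P}]$.}
\end{equation*}
Squaring and summing over the four choices of $(\varepsilon,p)$ gives
\begin{equation*}
\frac{\max_S H_N}{N^2} \;\longrightarrow\; 2\cdot\frac{(\mathrm{E}q_0^+)^2+(\mathrm{E}q_0^-)^2}{4} \;=\; \frac{(\mathrm{E}q_0^+)^2+(\mathrm{E}q_0^-)^2}{2}\qquad\text{a.s.}
\end{equation*}
This handles the finite case. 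In the case $\mathrm{E}|q_0|=\infty$, at least one of $\mathrm{E}q_0^\varepsilon$ is $+\infty$, and the SLLN forces the corresponding $(Q_\varepsilon^p/N)^2$ to diverge to $+\infty$, matching the right-hand side.

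There is essentially no obstacle: the content is already in the exact identity provided by Lemma \ref{lem:HD}(ii), and the only remaining ingredient is the SLLN applied separately along each parity class. The only subtlety worth checking is that one genuinely has independence of $\{q_i^\varepsilon\}_{i\equiv p}$ for each parity (immediate from the i.i.d.\ hypothesis on $\{q_i\}$) and that the extended SLLN applies uniformly to all four sums, which is routine.
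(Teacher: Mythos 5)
Your proposal is correct and follows essentially the same route as the paper: invoke the exact identity of Lemma \ref{lem:HD}(ii) to write $\max_S H_N$ as the sum of four squares $(Q_\varepsilon^p)^2$, and then apply the strong law of large numbers for i.i.d.\ nonnegative random variables to each parity/sign class. The paper's proof is phrased a bit more tersely, but the decomposition, the SLLN step, and the treatment of the possibly-infinite case are all the same.
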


This result immediately
implies Proposition \ref{prop:maxH:bc} because the random
walk piece $\{S_i\}_{i=0}^{N-1}$ is equal to the argmax
of $S\mapsto H_N$ with probability  $(2d)^{-N}$. And therefore
\begin{equation}\label{eq:Z:maxH}
	Z_N (\beta)  \geqslant
	(2 d)^{- N} \exp \left( \beta \max_S \frac{H_N}{N}
	\right) .
\end{equation}
	
\begin{proof}[Proof of Proposition \ref{prop:M}]
	Owing to Lemma \ref{lem:HD}, we can decompose the
	maximum energy as
	\begin{equation}
		\max_S H_N = (Q_+^{\tmop{odd}})^2 + (Q_-^{\tmop{odd}})^2 +
		(Q_+^{\tmop{even}})^2 + (Q_-^{\tmop{even}})^2 .
  \end{equation}
	And one can check readily that
	the strong law of large numbers for i.i.d.\ nonnegative random 
	variables implies that a.s.\ $[\mathrm{P}]$,
	\begin{equation}
		\lim_{N\to\infty}\frac{Q_\varepsilon^p}{N / 2}
		= \mathrm{E} q^\varepsilon_0
		\quad
		\text{for all $\varepsilon\in\{-\,,+\}$ and
		$p\in\{\text{\rm odd}\,,\text{\rm even}\}$.}
	\end{equation}
	This completes the proof.
\end{proof}

Next we present a lower bound for $D_N$ in terms of four 
nonadjacent points. This bound
will play an important role in the proof of Theorem \ref{thm:square}.
It also will lead to an upper bound on the maximum energy
$\max_{_S} H_N$ in the case that $d = 1$.

\begin{lemma}\label{lem-Dxd1}
	If $d\geqslant 1$ and
	$\varepsilon, \varepsilon' \in \{-\,,+\}$ satisfy
	$\|x_{\varepsilon}^{\tmop{odd}} - 
	x_{\varepsilon'}^{\tmop{even}} \| \neq 1$,
	then
	\begin{equation}
		D_N \geqslant  \sum_{1\leqslant i < N :\, i \text{ \rm odd}} \min \left(
		Q_{\varepsilon}^{\tmop{odd}} q_i^{\varepsilon}\,,
		Q_{\varepsilon'}^{\tmop{even}} q_{i - 1}^{\varepsilon'} \right) .
	\end{equation}
\end{lemma}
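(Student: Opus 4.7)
The plan is to discard two of the four nonnegative summands in the definition \eqref{eq:DN} of $D_N$, keep only the pair indexed by $(\varepsilon, \text{odd})$ and $(\varepsilon', \text{even})$, and then use the one-step parity structure of the simple random walk together with the hypothesis $\|x_\varepsilon^{\mathrm{odd}} - x_{\varepsilon'}^{\mathrm{even}}\| \neq 1$ to convert these two summands into the desired lower bound.

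First, for any $\eta \in \{-,+\}$ and $p \in \{\text{odd},\text{even}\}$, I will rewrite
\begin{equation*}
Q_\eta^p - \eta Q_N^{x_\eta^p} = \sum_{i \equiv p} q_i^\eta - \sum_{\substack{i \equiv p\\ S_i = x_\eta^p}} \eta q_i \;\geq\; \sum_{\substack{i \equiv p\\ S_i \neq x_\eta^p}} q_i^\eta,
\end{equation*}
the inequality being a consequence of $\eta q_i \leq q_i^\eta$. Note here I used that $S_i \equiv i$ in parity, so that points $x$ with parity $p$ are only visited at times $i \equiv p$. Multiplying by the nonnegative factor $Q_\eta^p$ yields
\begin{equation*}
Q_\eta^p\bigl(Q_\eta^p - \eta Q_N^{x_\eta^p}\bigr) \;\geq\; Q_\eta^p \sum_{\substack{i \equiv p\\ S_i \neq x_\eta^p}} q_i^\eta.
\end{equation*}

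Applying this with $(\eta,p)=(\varepsilon,\text{odd})$ and $(\eta,p)=(\varepsilon',\text{even})$ and discarding the remaining two (nonnegative) summands in \eqref{eq:DN} gives
\begin{equation*}
D_N \;\geq\; \sum_{\substack{1\leqslant i<N\\ i \text{ odd}}} Q_\varepsilon^{\mathrm{odd}}\, q_i^\varepsilon\, \mathbf{1}_{\{S_i \neq x_\varepsilon^{\mathrm{odd}}\}} \;+\; \sum_{\substack{1\leqslant i<N\\ i \text{ odd}}} Q_{\varepsilon'}^{\mathrm{even}}\, q_{i-1}^{\varepsilon'}\, \mathbf{1}_{\{S_{i-1} \neq x_{\varepsilon'}^{\mathrm{even}}\}},
\end{equation*}
where I have re-indexed the even sum by pairing each even $j=i-1$ with the odd $i$; at most the single even index $j=N-1$ (present only when $N$ is odd) is dropped, which is harmless as the terms are nonnegative.

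The final step is a pointwise comparison index-by-index. For each odd $i \in [1,N)$, since $\|S_i - S_{i-1}\|=1$ and by hypothesis $\|x_\varepsilon^{\mathrm{odd}} - x_{\varepsilon'}^{\mathrm{even}}\|\neq 1$, we cannot simultaneously have $S_i = x_\varepsilon^{\mathrm{odd}}$ and $S_{i-1} = x_{\varepsilon'}^{\mathrm{even}}$; hence at least one of the indicators $\mathbf{1}_{\{S_i \neq x_\varepsilon^{\mathrm{odd}}\}}$ or $\mathbf{1}_{\{S_{i-1} \neq x_{\varepsilon'}^{\mathrm{even}}\}}$ equals $1$. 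Since both summands are nonnegative, the sum of the two paired terms is therefore at least $\min\!\bigl(Q_\varepsilon^{\mathrm{odd}} q_i^\varepsilon, Q_{\varepsilon'}^{\mathrm{even}} q_{i-1}^{\varepsilon'}\bigr)$, and summing over odd $i$ delivers the claim. No step is genuinely delicate; the only care needed is in the parity/re-indexing bookkeeping in the middle step.
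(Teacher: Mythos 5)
Your proposal is correct and follows essentially the same route as the paper: drop the two irrelevant summands in the definition of $D_N$, bound each remaining factor $Q_\eta^p - \eta Q_N^{x_\eta^p}$ below by the sum of $q_i^\eta$ over non-optimal sites of parity $p$, pair each odd $i$ with $i-1$, and invoke the observation that $\|S_i - S_{i-1}\| = 1 \neq \|x_\varepsilon^{\mathrm{odd}} - x_{\varepsilon'}^{\mathrm{even}}\|$ forces at least one of the two paired indicators to be $1$. You simply spell out the re-indexing and the $a\mathbf{1}_A + b\mathbf{1}_B \geq \min(a,b)$ step that the paper leaves implicit.
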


\begin{proof}
	First of all, let us observe from the definition of $D$ that
	\begin{equation}
		D_N  \geqslant  Q_{\varepsilon}^{\tmop{odd}}
		\left( Q_{\varepsilon}^{\tmop{odd}} - \varepsilon
		Q^{x^{\tmop{odd}}_{\varepsilon}}_N \right) + 
		Q_{\varepsilon'}^{\tmop{even}}
		\left( Q_{\varepsilon'}^{\tmop{even}} - \varepsilon'
		Q^{x^{\tmop{even}}_{\varepsilon'}}_N \right).
	\end{equation}
	Next we note that
	\begin{equation}\begin{split}
		Q_{\varepsilon}^{\tmop{odd}} - \varepsilon
			Q^{x^{\tmop{odd}}_{\varepsilon}}_N 
			& \geqslant  \sum_{i \text{ odd} :\, S_i
			\neq x_{\varepsilon}^{\tmop{odd}}} q_i^{\varepsilon},
			\quad\text{and}\\
		Q_{\varepsilon'}^{\tmop{even}} - \varepsilon'
			Q^{x^{\tmop{even}}_{\varepsilon'}}_N & \geqslant
			\sum_{i \text{ even} :\,
			S_i \neq x_{\varepsilon'}^{\tmop{even}}} q_i^{\varepsilon'}.
	\end{split}\end{equation}
	If $i \in \{1, \ldots, N - 1\}$ is odd, then we necessarily have 
	either $S_{i - 1} \neq x_{\varepsilon'}^{\tmop{even}}$ or $S_i \neq
	x_{\varepsilon}^{\tmop{odd}}$. Therefore, the lemma follows.
\end{proof}

The following lemma will also be useful in our forthcoming analysis.

\begin{lemma}\label{lem:lwbH}
	For all $d \geqslant 1$, $\beta \geqslant 0$, $\varepsilon >
	0$, $N \geqslant 1$, and $q_0, \ldots, q_{N - 1} \in \mathbf{R}$:
	\begin{equation}
		\mathrm{P}_N^{\beta} \left\{
		\max_S H_N - H_N \geqslant \varepsilon N^2
		\right\} \leqslant  \mathrm{e}^{N[ \ln (2 d) - \beta \varepsilon]}.
	\end{equation}
\end{lemma}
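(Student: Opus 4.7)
The plan is to combine the trivial upper bound on the numerator of the quenched probability with the lower bound on the partition function already recorded in \eqref{eq:Z:maxH}. Specifically, I would start from the identity
\begin{equation*}
	\mathrm{P}_N^\beta\{\max_S H_N - H_N \geqslant \varepsilon N^2\}
	= \frac{1}{Z_N(\beta)}\,
	\mathrm{E}\!\left[\left.\mathbf{1}_{\{\max_S H_N - H_N \geqslant \varepsilon N^2\}}
	\exp\!\left(\frac{\beta}{N}H_N\right)\right|q_0,\ldots,q_{N-1}\right].
\end{equation*}

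On the event $\{\max_S H_N - H_N \geqslant \varepsilon N^2\}$, pointwise (in $S$) one has $H_N \leqslant \max_S H_N - \varepsilon N^2$; multiplying by $\beta/N \geqslant 0$ and exponentiating gives
\begin{equation*}
	\exp\!\left(\frac{\beta}{N}H_N\right)
	\leqslant \exp\!\left(\frac{\beta}{N}\max_S H_N - \beta\varepsilon N\right).
\end{equation*}
Since $\max_S H_N$ is a deterministic function of the charges, this bound can be pulled out of the conditional expectation, yielding an upper bound on the numerator of $\exp(\beta\max_S H_N/N - \beta\varepsilon N)$.

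For the denominator, I would invoke the already-displayed inequality \eqref{eq:Z:maxH}, namely $Z_N(\beta) \geqslant (2d)^{-N}\exp(\beta\max_S H_N/N)$, which holds since the specific trajectory attaining $\max_S H_N$ (guaranteed to exist by Lemma \ref{lem:HD}(ii) when $d\geqslant 2$, and trivially in general) has reference probability at least $(2d)^{-N}$. Taking the ratio, the term $\exp(\beta\max_S H_N/N)$ cancels exactly, leaving
\begin{equation*}
	\mathrm{P}_N^\beta\{\max_S H_N - H_N \geqslant \varepsilon N^2\}
	\leqslant (2d)^N\exp(-\beta\varepsilon N) = \exp(N[\ln(2d) - \beta\varepsilon]),
\end{equation*}
which is the claimed bound. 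There is no genuine obstacle here; the only subtlety is that \eqref{eq:Z:maxH} needs $d\geqslant 2$ in order for the maximizer to be realized by an actual simple random walk path (as constructed in the proof of Lemma \ref{lem:HD}(ii)), but for $d=1$ one may replace $\max_S H_N$ by the true quenched maximum over random walk paths and run the same argument, since by definition that quantity is attained by some trajectory of reference probability at least $(2d)^{-N} = 2^{-N}$.
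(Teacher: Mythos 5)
Your proof is correct and is essentially identical to the paper's: bound the numerator on the event by $\exp(\beta\max_S H_N/N - \beta\varepsilon N)$ and use \eqref{eq:Z:maxH} for the denominator. The worry about $d=1$ is unnecessary: by the paper's convention (stated in Lemma~\ref{lem:HD}) ``$\max_S$'' already means the maximum over all random walk paths, so the maximizing trajectory always has reference probability $(2d)^{-N}$ regardless of dimension, and \eqref{eq:Z:maxH} holds for every $d\geqslant 1$; Lemma~\ref{lem:HD}(ii) is only needed to \emph{identify} $\max_S H_N$ with $\sum_{\varepsilon,p}(Q_\varepsilon^p)^2$ when $d\geqslant 2$, which is not used here.
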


\begin{proof}
	Because
	\begin{equation}
		\mathrm{P}_N^{\beta} \left\{
		\max_S H_N - H_N \geqslant \varepsilon N^2
		\right\}\leqslant  \frac{\exp \left( \frac{\beta}{N}  \left( \max_S H_N
		- \varepsilon N^2 \right) \right)}{Z_N (\beta)},
	\end{equation}
	the lemma follows from \eqref{eq:Z:maxH}.
\end{proof}

Now we conclude the proof of Theorem \ref{thm:square}. We introduce
\begin{equation}\begin{split}
	\Gamma &:= \min_{\substack{\varepsilon\in\{-,+\}\\
		p\in\{\text{even},\text{odd}\}}} (Q_{\varepsilon}^p)^2,\\
	\Lambda &:= \min_{\varepsilon, \varepsilon'\in\{-,+\}} 
		\sum_{0\leqslant i < N :\, i \text{ \rm odd}}
		\min \left( Q_{\varepsilon}^{\tmop{odd}} q_i^{\varepsilon}
		\,,
		Q_{\varepsilon'}^{\tmop{even}} q_{i - 1}^{\varepsilon'} \right).
\end{split}  \label{eq:Gamma}\end{equation}

Recall from \eqref{eq:gamma} and
\eqref{eq:lambda} the quantities $\gamma$ and $\lambda$.
Then a direct inspection reveals that
\begin{equation}
	\lim_{N\to\infty}\frac{\Gamma}{(N/2)^2}=\gamma,
	\qquad
	\lim_{N\to\infty}\frac{\Lambda}{(N/2)^2}=\lambda.
\end{equation}

For every fixed $\delta > 0$, let us consider the events
\begin{align}
	\mathcal{E}_N^{\delta} &:= \left\{ (1 + \delta) \frac{\Gamma}{N^2}
		\geqslant \frac{\gamma}{4} \text{ \ and \ } (1 + \delta) \frac{\Lambda}{N^2}
		\geqslant \frac{\lambda}{4} \right\},\label{def:EN}\\
	\mathcal{C}_{\alpha} &:= \left\{ \begin{array}{l}
		\varepsilon Q_N^{x_{\varepsilon}^p} \geqslant
			\dfrac{1 + \alpha}{2} Q_{\varepsilon}^p\text{ for all $\varepsilon=\pm$
			and $p=\text{odd}/\text{even}$}\\
		\|x_{\varepsilon}^{\tmop{odd}} -
			x_{\varepsilon'}^{\tmop{even}} \|= 1
			\text{ for all $\varepsilon$, $\varepsilon'=\pm 1$}
	\end{array} \right\},\label{eq:Calpha}
\end{align}
so that $\mathcal{C}_{\alpha}$ is the event that the points
$x_{\pm}^{\tmop{odd} / \tmop{even}}$ are adjacent and possess each a
proportion at least $(1 + \alpha) / 2$ of the available charge.
Note, in particular, that
\begin{equation}\label{eq:CinS}
	\mathcal{C}_\alpha\subseteq\mathcal{S}_\alpha,
\end{equation}
where the event $\mathcal{S}_\alpha$ was defined in
\eqref{eq:CinS}.

\begin{proof}[Proof of Theorem \ref{thm:square}]
	In accord with Cram\'er's theorem there exists $c_{\delta} > 0$ such that
	\begin{equation}
		\mathrm{P}( \mathcal{E}_N^{\delta}) \geqslant  1 - \exp
		\left( - c_{\delta} N \right)
		\qquad\text{for all $N \geqslant 1.$}
		\label{eq:pEN}
	\end{equation}
	Next we observe that if $\varepsilon
	Q_N^{x^p_{\varepsilon}} \leqslant (1 + \alpha) Q^p_{\varepsilon} / 2$ for
	some $p \in \{\tmop{odd}\,, \tmop{even}\}$
	and $\varepsilon \in \{-\,,+\}$, then
	\begin{equation}
		D_N \geqslant \left(\frac{1 -\alpha}{2}\right) 
		(Q^p_{\varepsilon})^2 \geqslant \left(\frac{1 -\alpha}{2}\right) \Gamma,
	\end{equation} 
	in accord with the definition \eqref{eq:DN} of $D_N$.
	If, on the other hand,
	$\|x_{\varepsilon}^{\tmop{odd}} - x_{\varepsilon'}^{\tmop{even}} \| \neq 1$
	for some $\varepsilon, \varepsilon' \in \{-\,,+\}$ then $D_N \geqslant
	\Lambda$ [Lemma \ref{lem-Dxd1}]. Therefore, we may apply
	Lemmas \ref{lem:HD} and \ref{lem:lwbH} in conjunction to deduce that
	the following holds almost surely on $\mathcal{E}_N^\delta$:
	\begin{equation}\begin{split}
		\mathrm{P}_N^{\beta} \left( \mathcal{C}_{\alpha}^c \right) & \leqslant 
			\mathrm{P}_N^{\beta} \left\{ 
			D_N \geqslant \min \left( \frac{1 - \alpha}{2} \cdot
			\Gamma\,, \Lambda \right) \right\}\\
		& \leqslant  \mathrm{P}_N^{\beta} \left\{ (1 + \delta) \frac{D_N}{N^2}
			\geqslant \min \left( \frac{1 - \alpha}{2} \cdot \frac{\gamma}{4}\,,
			\frac{\lambda}{4} \right) \right\}\\
		& \leqslant  \exp \left( N \left[ \ln (2 d) - \frac{\beta}{1 + \delta}
			\min \left( \frac{1 - \alpha}{2} \cdot \frac{\gamma}{4}\,,
			\frac{\lambda}{4} \right) \right] \right).
	\end{split}\label{eq:PC}\end{equation}
	This and \eqref{eq:CinS} together imply the result.
\end{proof}

Our next result estimates the maximum allowable
energy $\max_S H_N / N^2$ in the case that $d =1$.
It might help to recall that $\lambda$ was defined
in \eqref{eq:lambda}.

\begin{lemma}\label{lem:maxH:d1}
	If $d=1$, then 
	\begin{equation}
		\limsup_{N\to\infty} \frac{1}{N^2} \max_S H_N  \leqslant  \frac{%
		(\mathrm{E} q_0^+)^2 + (\mathrm{E} q_0^-)^2}{2} - \frac{\lambda}{4}
		\qquad\text{a.s. $[\mathrm{P}]$.}
	\end{equation}
	And, for all $\varepsilon \in \{-\,,+\}$,
	\begin{align}
		&\liminf_{N\to\infty} \frac{1}{N^2} \max_S H_N\\
		& \geqslant
			\frac{(\mathrm{E} q_0^+)^2 + (\mathrm{E} q_0^-)^2}{4} +
			\frac{a_4}{4} ( \mathrm{E} q_0^{\varepsilon})^2
			+ \left( \frac{\mathrm{E} q_0}{2} - \frac{\varepsilon a_2\mathrm{E}
			q_0^{\varepsilon} }{2}  \right)^2,
	\end{align}
	almost sure $[\mathrm{P}]$, where
	\begin{equation}
		a_k:=\left[ \mathrm{P}\{q_0\geqslant 0\}\right]^k
		+\left[ \mathrm{P}\{q_0<0\}\right]^k
		\qquad\text{for $k=2,4$}.
	\end{equation}
\end{lemma}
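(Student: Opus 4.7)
For the upper bound I would start from Lemma~\ref{lem:HD}(i): $H_N \leq \sum_{\varepsilon,p}(Q_\varepsilon^p)^2 - D_N$. The strong law of large numbers trivially gives $\sum_{\varepsilon,p}(Q_\varepsilon^p)^2/N^2 \to [(\mathrm{E}q_0^+)^2 + (\mathrm{E}q_0^-)^2]/2$ a.s., so the real task is to establish $\liminf_{N\to\infty} D_N/N^2 \geq \lambda/4$ a.s., \emph{uniformly over the walk $S$}. My approach is a case split on the configuration of the four sites $x_\varepsilon^p$. In the generic case $x_+^p \neq x_-^p$ for both parities, a short geometric argument in $\mathbf{Z}$ shows that two distinct odd (resp.\ even) points admit at most one common even (resp.\ odd) neighbor, so some pair $(x_\varepsilon^{odd}, x_{\varepsilon'}^{even})$ must satisfy $\|x_\varepsilon^{odd} - x_{\varepsilon'}^{even}\| \neq 1$; Lemma~\ref{lem-Dxd1} then yields $D_N \geq \Lambda$ and the LLN gives $\Lambda/N^2 \to \lambda/4$. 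In the degenerate case $x_+^{odd} = x_-^{odd}$, the max and min of $Q_N^\cdot$ over odd sites coincide, so $Q_N^\cdot$ is constant over all visited odd sites; a concavity argument then caps the odd contribution to $H_N$ by $(\sum_{i\text{ odd}} q_i)^2 \sim (N/2)^2(\mathrm{E}q_0)^2$, while $\sum_{x\text{ even}}(Q_N^x)^2 \leq (Q_+^{even})^2 + (Q_-^{even})^2$ holds trivially by the triangle inequality. The target upper bound then reduces to the algebraic inequality $\lambda \leq 2\mathrm{E}q_0^+\mathrm{E}q_0^-$, itself obtained by applying $\min(a,b) \leq \sqrt{ab}$ and Cauchy--Schwarz to the definition of $\lambda$ at $(\varepsilon,\varepsilon') = (+,-)$. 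The symmetric degeneration $x_+^{even} = x_-^{even}$ is handled identically; I expect organizing these case distinctions cleanly to be the main obstacle.

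For the lower bound I would exhibit, for each $\varepsilon \in \{-,+\}$, an explicit simple random walk trajectory $S^*$ achieving the bound. By sign-symmetry it suffices to treat $\varepsilon = +$. At odd $i$, set $S^*_i = +1$ if $q_i \geq 0$ and $S^*_i = -1$ otherwise. At even $j \geq 2$, use the freedom between the two even neighbors of $S^*_{j-1}$: set $S^*_j = +2$ when $q_{j-1}, q_j, q_{j+1}$ are all nonnegative, $S^*_j = -2$ when $q_{j-1}, q_{j+1} < 0$ and $q_j \geq 0$, and $S^*_j = 0$ in all remaining situations. A direct inspection of the four sign patterns of $(q_{j-1}, q_{j+1})$ confirms that every step of $S^*$ differs by exactly $\pm 1$, so $S^*$ is a valid trajectory. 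Applying the strong law to the five disjoint classes of i.i.d.\ charges (with $p := \mathrm{P}\{q_0 \geq 0\}$), the sites $+1, -1, +2, -2, 0$ acquire asymptotic charges (normalized by $N/2$) equal to $\mathrm{E}q_0^+$, $-\mathrm{E}q_0^-$, $p^2\mathrm{E}q_0^+$, $(1-p)^2\mathrm{E}q_0^+$, and $\mathrm{E}q_0 - a_2\mathrm{E}q_0^+$ respectively (the last by subtracting the $\pm 2$ contributions from $\sum_{j\text{ even}} q_j/(N/2) \to \mathrm{E}q_0$). Summing squares, dividing by $4$, and invoking $a_4 = p^4 + (1-p)^4$ reproduces the claimed lower bound with $\varepsilon = +$.
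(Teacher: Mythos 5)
Your proof is correct and follows the paper's route exactly: Lemma \ref{lem:HD}(i) plus Lemma \ref{lem-Dxd1} for the upper bound, and the same explicit trajectory for the lower bound (your rule for the even monomers, once the sign patterns of $(q_{j-1},q_{j+1})$ are unwound, is precisely the paper's ``$S_j=\pm2$ when $S_{j-1}=S_{j+1}$ and $\mathrm{sgn}(q_j)=\varepsilon$, else $S_j=0$''). The case-organization obstacle you anticipate evaporates once one notices that if $x_+^p=x_-^p$ then $Q_N^x\equiv 0$ over all sites of parity $p$, so $x_\varepsilon^p$---defined only up to tie-break among the maximizers---can always be chosen with the four points distinct; the generic geometric argument (four distinct points cannot form a $K_{2,2}$ in the path graph $\mathbf{Z}$) then covers every case at once, which is implicitly what the paper does, and your separate degenerate branch (together with the auxiliary inequality $\lambda\leqslant 2\mathrm{E}q_0^+\mathrm{E}q_0^-$, which in fact holds with the sharper constant $1$) becomes unnecessary.
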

 
\begin{remark}
	In the case that $q_0$ has the Rademacher distribution
	[i.e., $\mathrm{P}\{q_0=\pm 1\}=1/2$], the preceding
	tells us that
	\begin{equation}
		\frac{19}{128} \leqslant
		\liminf_{N\to\infty} \frac{1}{N^2} \max_S H_N
		\leqslant \limsup_{N\to\infty}\frac{1}{N^2}
		\max_S H_N \leqslant \frac{7}{32}.
	\end{equation}
	[Note that $19/128\approx 0.1484375$ and
	$7/32\approx 0.21875$.]
\end{remark}

\begin{proof}[Proof of Lemma \ref{lem:maxH:d1}]
	We use the same notation as in the former proof. Since we have $d = 1$ it is
	not possible that $x_{\pm}^{\tmop{odd}}$ are adjacent to
	$x_{\pm}^{\tmop{even}}$. In view of Lemma \ref{lem-Dxd1} this implies that
	\begin{equation}
		(1 + \delta) \frac{D_N}{N^2} \geqslant  \frac{\lambda}{4}
		\qquad\text{for all $q \in \mathcal{E}^{\delta}_N$,}
	\end{equation}
	and hence $\max_S H_N/N^2$ is bounded above by
	\begin{equation}
		\frac{(Q_+^{\tmop{odd}})^2 +
		(Q_-^{\tmop{odd}})^2 + (Q_+^{\tmop{even}})^2 + 
		(Q_-^{\tmop{even}})^2}{N^2}
		- \frac{\lambda}{4(1+\delta)},
	\end{equation}
	 for every $q \in \mathcal{E}^{\delta}_N$. This yields
	 the first assertion of the lemma.
  
	We propose the following strategy in order to
	establish the asserted [asymptotic]
	lower bound on $N^{-2}\max_SH_N$: Choose
	and fix a sign $\varepsilon \in
	\{\pm\}$, and place odd monomers at positions $S_i = 1$ if $q_i \geqslant 0$,
	$S_i = - 1$ otherwise, and even monomers---whenever possible [that is, $S_{i - 1}
	= S_{i + 1}$]---at position $S_i = \pm 2$ if $\tmop{sign} (q_i) =
	\varepsilon$ and $S_i = 0$ otherwise. A computation, involving
	the strong law of large numbers, then shows that almost surely
	$[\mathrm{P}]$,
	\begin{equation}\begin{split}
		\lim_{N\to\infty}
			\frac{Q_N^{\pm 1}}{N} & = \pm \frac{\mathrm{E}
			q_0^{\pm}}{2},\\
		\lim_{N\to\infty}
			\frac{Q_N^{+ 2}}{N} & = \varepsilon \frac{\mathrm{E}
			q_0^ \varepsilon}{2}
			\left(\mathrm{P} \{q_0 \geqslant 0\}\right)^2,\\
		\lim_{N\to\infty}
			\frac{Q_N^{- 2}}{N} & =\varepsilon \frac{\mathrm{E}
			q_0^{\varepsilon}}{2} \left( \mathrm{P} \{
			q_0 < 0\}\right)^2,\quad\text{and}\\
		\lim_{N\to\infty}
			\frac{Q_N^0}{N} & =\frac{\mathrm{E} q_0}{2} - \varepsilon
			a_2\frac{\mathrm{E} q_0^{\varepsilon}}{2}.
	\end{split}\end{equation}
	This yields the lower bound.
\end{proof}

\subsection{Logarithmic range and bounded expectation of $|S_N |$
(Proofs of Theorems \ref{thm:range} and \ref{thm:ER})}

In this Section we prove Theorem \ref{thm:log} below and derive
Theorem \ref{thm:range} from it. 
We also present here a proof of Theorem \ref{thm:ER}. 

Given $N \geqslant 1$ and $L \geqslant 1$, define
\begin{equation}
	\bar{\mathsf{q}}_L  := \min_{\substack{\ell \geqslant L\\0\leqslant
	i < N - \ell}}\left( \frac{1}{\ell} \sum_{k = i}^{i
	+ \ell - 1} |q_k |\right).
	\label{eq:qL}
\end{equation}

\begin{theorem}[Logarithmic diameter]\label{thm:log} 
	\begin{enumerateroman}
	\item If $d \geqslant 1$ and $\mathrm{E} |q_0 | < \infty$, then
		for every $\beta \in \mathbf{R}$ and $\varepsilon > 0$ there 
		exists $c > 0$ such that for all sufficiently large integers
		$N\geqslant1$,
		\begin{equation}
			\mathrm{E}\left[ \mathrm{P}_N^{\beta} \left\{
			\frac{\tmop{Diam} \{S_i :\, 0\leqslant i <
			N\}}{\ln N} \geqslant c\right\}\right]
			\geqslant 1 - \exp \left( - c N^{1 -
			\varepsilon} \right).
			\label{eq:Diamslog}
		\end{equation}
	\item If $d \geqslant 2$, then for every $\alpha \in (0, 1)$,
		$N\geqslant 1$, $0\leqslant L\leqslant N$, and $\beta \in \mathbf{R}$,
		\begin{equation}\begin{split}
			&\mathrm{P}_N^{\beta} \left( \Big\{ \tmop{Diam}
			\{S_i :\, 0\leqslant i < N\}
			\geqslant L + 1 \Big\} \cap \mathcal{C}_{\alpha} \right) \\
			&\hskip1.3in
			\leqslant N^2 \exp \left( L \left[ \ln (2 d) -
			\frac{ 2 \beta \alpha\sqrt{\Gamma}}{N} 
			\,\bar{\mathsf{q}}_L \right] \right) .
			\label{eq:Diamilog}
		\end{split}\end{equation}
	\end{enumerateroman}
\end{theorem}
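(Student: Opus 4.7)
Theorem \ref{thm:log} splits into a lower bound on the diameter (i) and an upper bound on the diameter on the event $\mathcal{C}_\alpha$ (ii). I plan to address (ii) first: it is the technical heart. Part (i) will follow from standard simple-random-walk confinement estimates combined with an almost-sure lower bound on $Z_N(\beta)$.

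For part (ii), the plan is a Peierls-type union bound over excursions away from the unit square $U$ supplied by $\mathcal{C}_\alpha$. On $\mathcal{C}_\alpha\cap\{\tmop{Diam}\{S_i:0\le i<N\}\ge L+1\}$, since $\tmop{Diam}_{\ell^1}(U)\le 2$, some visited site is at $\ell^1$-distance $\ge L/2$ from $U$; by tracking the last $U$-visit before and the first after this site, one isolates an excursion of length $\ell\ge L$ (modulo boundary cases where an endpoint of the excursion coincides with $0$ or $N-1$). I would then union-bound over the pair $(i,\ell)\in\{0,\ldots,N-1\}\times\{L,\ldots,N\}$, giving at most $N^2$ terms. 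For each fixed $(i,\ell)$, the off-$U$ excursion admits at most $(2d)^\ell$ realizations, and its $\ell-1$ interior charges have absolute sum $\ge(\ell-1)\bar{\mathsf{q}}_L$ by definition of $\bar{\mathsf{q}}_L$. By the defining inequality $\varepsilon Q_N^{x^p_\varepsilon}\ge(1+\alpha)Q^p_\varepsilon/2\ge\alpha\sqrt{\Gamma}$ of $\mathcal{C}_\alpha$, together with the charge decomposition of $D_N$ underlying Lemma \ref{lem:HD}, each unit of absolute off-$U$ charge depresses $H_N$ below $\max_S H_N$ by at least $2\alpha\sqrt{\Gamma}$, so $\max_S H_N-H_N\ge 2\alpha\sqrt{\Gamma}(\ell-1)\bar{\mathsf{q}}_L$. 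A path-by-path comparison against walks whose excursion is replaced by the canonical $U$-walk (which achieves $\max_S H_N$), combined with the lower bound $Z_N(\beta)\ge(2d)^{-N}\exp(\beta\max_SH_N/N)$ from \eqref{eq:Z:maxH}, turns the $(2d)^\ell$ excursion choices and the energy deficit into the per-pair bound $\exp(\ell\ln(2d)-2\beta\alpha\sqrt{\Gamma}(\ell-1)\bar{\mathsf{q}}_L/N)$. Summing geometrically in $\ell\ge L$ yields the $N^2\exp(L[\cdots])$ conclusion.

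For part (i), the spectral-gap estimate $\mathrm{P}\{\tmop{Diam}\{S_i:0\le i<N\}<L\}\le CL^{d-1}\exp(-cN/L^2)$ for simple random walk confined to a box is standard. Since $Z_N(\beta)\ge 1$ when $\beta\ge 0$ by Jensen (and $\beta<0$ is immediate), one has
\begin{equation*}
\mathrm{E}[\mathrm{P}_N^\beta\{\tmop{Diam}<L\}]\le\mathrm{E}[\mathbf{1}_{\tmop{Diam}<L}e^{\beta H_N/N}]=\mathrm{E}\bigl[\mathbf{1}_{\tmop{Diam}<L}\mathrm{E}[e^{\beta H_N/N}\mid S]\bigr].
\end{equation*}
The inner conditional expectation---a quadratic-form Laplace transform in the charges---can be controlled via $H_N\le L_N^\star\sum_i q_i^2$ from \eqref{eq:H:Lstar}, after a Chebyshev truncation of $\sum_i q_i^2$ that is valid under $\mathrm{E}|q_0|<\infty$. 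With $L=c'\ln N$ and $c'$ sufficiently small, the confinement decay $\exp(-cN/(\ln N)^2)$ dominates any subexponential blow-up of the Laplace transform, yielding the stretched-exponential rate $\exp(-cN^{1-\varepsilon})$.

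The principal obstacle lies in part (ii): the comparison $H_N(W_{\text{exc}})\le H_N(W_{\text{can}})-2\alpha\sqrt{\Gamma}(\ell-1)\bar{\mathsf{q}}_L$ must hold path-by-path between walks sharing pre- and post-excursion segments, which requires careful bookkeeping of the displaced charges against the optimal configuration on the four vertices of $U$; the margin $\alpha$ from $\mathcal{C}_\alpha$'s defining inequality is precisely what survives the sign-parity quartet in the energy decomposition. In part (i), the subtlety is to perform the charge truncation while preserving the stated rate under only the first-moment hypothesis $\mathrm{E}|q_0|<\infty$, which forces $\varepsilon>0$ to be introduced (hence the $N^{1-\varepsilon}$ rather than $N$ exponent).
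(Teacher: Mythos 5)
Your plan for part~(ii) has the right overall shape (a Peierls union bound over excursions, with the energy deficit supplied by the $\mathcal{C}_\alpha$ margin), but the concluding step is wrong. You write that the per-pair bound comes from combining the path-by-path comparison with the inequality $Z_N(\beta)\geqslant(2d)^{-N}\exp(\beta\max_SH_N/N)$ and the deficit $\max_SH_N-H_N(S)\geqslant2\alpha\sqrt{\Gamma}(\ell-1)\bar{\mathsf{q}}_L$. If you literally combine those two facts (this is the content of Lemma \ref{lem:lwbH}), you get $\mathrm{P}_N^\beta\{\max_SH_N-H_N\geqslant2\alpha\sqrt{\Gamma}\ell\bar{\mathsf{q}}_L\}\leqslant\exp(N\ln(2d)-2\beta\alpha\sqrt{\Gamma}\ell\bar{\mathsf{q}}_L/N)$, with the entropy $N\ln(2d)$, not $\ell\ln(2d)$. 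The walk $\tilde S$ that replaces the excursion by the canonical $U$-walk does \emph{not} achieve $\max_SH_N$: it coincides with $S$ off the excursion, so it is not globally optimal. The correct bookkeeping (which the paper uses) compares $H_N(S)$ against $H_N(\tilde S)$ directly, notes that $S\mapsto\bar S:=\tilde S-\tilde S_0$ is at most $(2d)^{|I|}$-to-one with $\bar S\in\mathcal C(I)$, and factors out $\mathrm{P}_N^\beta(\mathcal C(I))\leqslant1$. That replaces $(2d)^N$ by $(2d)^{|I|}$ and yields \eqref{eq:Diamilog}. Drop the appeal to \eqref{eq:Z:maxH} entirely.

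Part~(i) as you propose it would fail. You bound $\mathrm{E}[\mathrm{P}_N^\beta\{\tmop{Diam}<L\}]$ by $\mathrm{E}[\mathbf{1}_{\tmop{Diam}<L}\,e^{\beta H_N/N}]$ (using $Z_N\geqslant1$ for $\beta\geqslant0$) and then hope that the confinement decay $\exp(-cN/L^2)$ beats the Laplace transform. But on $\{\tmop{Diam}<L\}$ the maximum local time $L_N^\star$ is not controlled from above; the walk can alternate between two sites, giving $L_N^\star\approx N/2$, an event of probability $\approx(2d)^{-N}$. Restricted to that configuration, for Rademacher charges the quenched Laplace transform is $\mathrm{E}[\exp(\beta((Q^{\rm even})^2+(Q^{\rm odd})^2)/N)]$, which by a large-deviation computation grows like $\exp(cN)$ once $\beta$ is large. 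The product with $(2d)^{-N}$ is then $\exp((\tfrac{\beta}{2}-\ln(2d))N)\to\infty$ for $\beta>2\ln(2d)$, so the bound is vacuous. The claim that the blow-up is ``subexponential'' is false, and the confinement estimate cannot save it. A second, independent problem is that part~(i) assumes only $\mathrm{E}|q_0|<\infty$, which gives no control on $\sum_iq_i^2$; your proposed ``Chebyshev truncation of $\sum_iq_i^2$'' is not available under that hypothesis. The paper's route avoids both issues: it works conditionally inside the Gibbs measure, observing that modifying one position changes $H_N$ by at most $8N\mathrm{E}|q_0|\,|q_i|$ (on the event $\sum|q_i|\leqslant2N\mathrm{E}|q_0|$), which yields a uniform lower bound on the conditional $\mathrm{P}_N^\beta$-probability that a given length-$L$ block spreads out, namely $(2d)^{-L}\exp(-8|\beta|\mathrm{E}|q_0|\sum_{\rm block}|q_i|)$. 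Partitioning into $\sim N/L$ blocks and choosing $L\sim\ln N$ then gives the stretched-exponential rate without ever needing to control $\mathrm{E}[\mathbf{1}_{\tmop{Diam}<L}\,e^{\beta H_N/N}]$.
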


First we present a quick proof of Theorem \ref{thm:range}
that uses Theorem \ref{thm:log}. Then we establish
the latter result.

\begin{proof}[Proof of Theorem \ref{thm:range}]
	We apply \eqref{eq:Diamslog} and
	\eqref{eq:Diamilog} with $L := C \ln N$ to obtain
	all but part 2 immediately. And part 2 follows from Theorem~\ref{thm:square} and
	from Cram\'er's theorem, since
	\begin{equation}
		\mathrm{P} \left\{ \bar{\mathsf{q}}_L 
		\leqslant \mathrm{E} |q_0 | - \varepsilon \right\}
		\leqslant  N^2 \sup_{l \geqslant L} \mathrm{P} \left\{\frac{|q_1 | +
		\cdots + |q_l |}{l} \leqslant \mathrm{E} |q_0 | - \varepsilon \right\}
	\end{equation}
	decays more quickly than $N^{- K}$, provided that 
	$C$ is large enough.
\end{proof}

Our proof of Theorem \ref{thm:log} hinges on an analysis of the trajectory
of a certain portion of the polymer, conditional on 
the charges and the remaining portions of the polymer. We
begin with a Lemma that is useful for bounding the range of
the polymer from above. 

Choose and fix an integer $N\geqslant 1$, and let $I$ be a contiguous subset of $\{0\,, \ldots, N - 1\}$ with $|I| < N$. Given a realization of the polymer $S$ that satisfies $C_\alpha$ for some $0<\alpha<1$, we say that monomer $i\in\{0,\ldots,N-1\}$ is optimal when $S_i = x_{\tmop{sgn} (q_i)}^{\tmop{parity} (i)}$ (when $q_i=0$, monomer $i$ is optimal when $S_i \in \{ x_+^{\tmop{parity} (i)}, x_-^{\tmop{parity} (i)}\}$). By extension, we say that $S$ is nonoptimal on $I$ when none of the monomers $i\in I$ are optimal.
Define
\begin{equation}\begin{split}
	\mathcal{N}(I) &:= \left\{ S \text{ is nonoptimal on } I \right\},
		\quad\text{and}\\
	\mathcal{C}(I) &:= \left. \mathcal{C}_{\alpha} \cap \{S \text{ is optimal
		at the position(s) next to } I \right\},
\end{split}\end{equation}
where $\alpha \in (0\,, 1)$, and $\mathcal{C}_{\alpha}$ is the event defined
in \eqref{eq:Calpha}.

\begin{lemma}\label{lem:HSt}
	Let $N$, $\alpha$, and $I$ be fixed as above. Given a realization of $q$ and $S \in \mathcal{N}(I) \cap \mathcal{C}(I)$, define $\tilde{S}$ as follows:
	\begin{equation}\begin{split}
		\tilde{S}_i &= \left\{ \begin{array}{ll}
			S_i & \text{if } i \nin I,\\
		x^{\tmop{parity} (i)}_+ & \text{if } i \in I\ \&\ q_i \geqslant 0,\\
			x^{\tmop{parity} (i)}_- & \text{if } i \in I\ \&\ q_i < 0.
		\end{array} \right.
	\end{split}\end{equation}
	Then the trajectory
	$(\tilde{S}_i - \tilde{S}_0)_{i=0}^{N-1}$ is a possible
	realization of a simple random walk and
	\begin{equation}
		H_N ( \tilde{S}) - H_N (S)  \geqslant 2 \alpha 
		\mathop{\sum\sum}\limits_{\substack{
		\varepsilon\in\{-,+\}\\p\in\{\text{\rm odd},\text{\rm even}\}}}
		\left[ Q^p_{\varepsilon} \times \sum_{i \in I :\ i \equiv p}
		q_i^{\varepsilon} \right] .
	\end{equation}
\end{lemma}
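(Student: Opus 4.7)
The plan is to first verify that $\tilde S$ defines an admissible simple random walk trajectory, and then to bound the energy difference by a bilinear expansion that reduces the inequality to a monomer-by-monomer estimate inside $I$.

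For the random-walk property, I would check $\|\tilde S_{i+1}-\tilde S_i\|=1$ for every $i=0,\dots,N-2$ by a short case analysis on whether $i,i+1$ lie in $I$. When both lie in $I^c$ this is just the simple-random-walk property of $S$; when both lie in $I$, the positions $\tilde S_i$ and $\tilde S_{i+1}$ are optimal points of opposite parity, so the unit-square condition in $\mathcal{C}_\alpha$ gives distance $1$. At the boundary of $I$, the event $\mathcal{C}(I)$ forces the $S$-position just outside $I$ to be optimal, and the same unit-square condition again closes the distance to $1$.

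For the energy inequality, I would expand $H_N(S)=\sum_{i,j}q_iq_j\mathbf{1}_{\{S_i=S_j\}}$ (and likewise $H_N(\tilde S)$), split the double sum according to whether $i,j$ belong to $I$ or to $I^c$, and reorganize. With $A_x := \sum_{i \notin I:\, S_i = x} q_i$, $B_x := \sum_{i \in I:\, S_i = x} q_i$ and $R_\varepsilon^p := \sum_{i\in I:\, i \equiv p} q_i^\varepsilon$, the calculation produces
\begin{equation*}
H_N(\tilde S) - H_N(S) = 2 \sum_{i \in I} q_i [A_{\tilde S_i} - A_{S_i}] + \sum_{\varepsilon, p}(R_\varepsilon^p)^2 - \sum_x B_x^2.
\end{equation*}
Using $A_{S_i} = Q_N^{S_i}(S) - B_{S_i}$ and the identity $\sum_{i \in I} q_i B_{S_i} = \sum_x B_x^2$, this rearranges to
\begin{equation*}
H_N(\tilde S) - H_N(S) = 2\sum_{i \in I} q_i \bigl[A_{\tilde S_i} - Q_N^{S_i}(S)\bigr] + \sum_x B_x^2 + \sum_{\varepsilon, p}(R_\varepsilon^p)^2,
\end{equation*}
so the problem reduces to showing $\sum_{i \in I} q_i [A_{\tilde S_i} - Q_N^{S_i}(S)] \geqslant \alpha \sum_{\varepsilon, p} Q_\varepsilon^p R_\varepsilon^p$; the two quadratic remainders are nonnegative and can be discarded.

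For the per-monomer bound, fix $i \in I$ with $q_i \neq 0$ and set $\varepsilon_i = \tmop{sgn}(q_i)$, $p_i = \tmop{parity}(i)$, so that $\tilde S_i = x_{\varepsilon_i}^{p_i}$. Under $\mathcal{N}(I)$ the charges contributing to $B_{x_\varepsilon^p}$ all have sign $-\varepsilon$, whence $\varepsilon B_{x_\varepsilon^p} \leqslant 0$; together with $\varepsilon Q_N^{x_\varepsilon^p}(S) \geqslant \tfrac{1+\alpha}{2} Q_\varepsilon^p$ from $\mathcal{C}_\alpha$, this yields $\varepsilon A_{x_\varepsilon^p} \geqslant \tfrac{1+\alpha}{2} Q_\varepsilon^p$ and hence $q_i A_{\tilde S_i} \geqslant \tfrac{1+\alpha}{2} |q_i| Q_{\varepsilon_i}^{p_i}$. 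For the converse bound on $q_i Q_N^{S_i}(S)$, I would split $Q_N^y = P_y - N_y$ into positive and absolute-negative parts and use the conservation $\sum_{y \equiv p} P_y = Q_+^p$ together with $P_{x_+^p} \geqslant \tfrac{1+\alpha}{2} Q_+^p$ to obtain $P_y \leqslant \tfrac{1-\alpha}{2} Q_+^p$ for all $y \equiv p$ with $y \neq x_+^p$ (and symmetrically for $N_y$). Since $\mathcal{N}(I)$ forbids $S_i = x_{\varepsilon_i}^{p_i}$, either $S_i = x_{-\varepsilon_i}^{p_i}$ (giving $\varepsilon_i Q_N^{S_i}(S) \leqslant 0$) or $S_i$ is non-optimal (and the bounds on $P_y, N_y$ give $\varepsilon_i Q_N^{S_i}(S) \leqslant \tfrac{1-\alpha}{2} Q_{\varepsilon_i}^{p_i}$). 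Subtracting yields $q_i[A_{\tilde S_i} - Q_N^{S_i}(S)] \geqslant \alpha |q_i| Q_{\varepsilon_i}^{p_i}$; summing over $i \in I$ produces $\alpha \sum_{\varepsilon, p} Q_\varepsilon^p R_\varepsilon^p$, and the factor $2$ finishes. The main obstacle is the charge-balance estimate on $P_y$ and $N_y$ for non-optimal $y$, which is the step that uses $\mathcal{C}_\alpha$ essentially.
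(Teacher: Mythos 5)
Your argument is correct, and it organizes the computation in a genuinely different way from the paper. The paper writes the energy difference positionwise, as $H_N(\tilde S)-H_N(S)=\sum_x\bigl(Q_N^x(\tilde S)+Q_N^x(S)\bigr)\bigl(Q_N^x(\tilde S)-Q_N^x(S)\bigr)$ with the sum restricted to $\{x_\pm^{\mathrm{odd}/\mathrm{even}}\}\cup\{S_i:i\in I\}$, then splits this into a ``gain'' $T_1$ over the four target positions and a ``loss'' $T_2$ over the vacated positions, proving $T_1\geqslant(1+\alpha)\sum_{\varepsilon,p}Q_\varepsilon^p\sum_{i\in I:i\equiv p}q_i^\varepsilon$ and $T_2\leqslant(1-\alpha)\sum_{\varepsilon,p}Q_\varepsilon^p\sum_{i\in I:i\equiv p}q_i^\varepsilon$. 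You instead use the pair representation $H_N=\sum_{i,j}q_iq_j\mathbf{1}_{\{S_i=S_j\}}$, separate the $(I\times I^c)$ cross terms from the $(I\times I)$ block, and isolate the nonnegative residuals $\sum_x B_x^2$ and $\sum_{\varepsilon,p}(R_\varepsilon^p)^2$ so that the whole inequality reduces to a \emph{per-monomer} estimate $q_i\bigl[A_{\tilde S_i}-Q_N^{S_i}(S)\bigr]\geqslant\alpha|q_i|Q_{\varepsilon_i}^{p_i}$. The two proofs consume the same inputs (the $\mathcal{C}_\alpha$ lower bound $\varepsilon Q_N^{x_\varepsilon^p}(S)\geqslant\frac{1+\alpha}{2}Q_\varepsilon^p$ together with the complementary $\frac{1-\alpha}{2}$ upper bound at non-optimal sites), but the monomerwise reduction makes the edge case $S_i=x_{-\varepsilon_i}^{p_i}$ (a vacated site that happens to be one of the four optimal points) transparent, whereas in the paper that case is absorbed into the interval bound used for $T_2$; your version also makes visible two nonnegative quadratic terms that are simply discarded, which shows where the inequality has slack. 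The paper's version is a bit shorter; yours is somewhat more modular and self-checking.
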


\begin{proof}
	Because $S$ is optimal off $I$,
	$\tilde{S}$ is a simple random walk [but
	it might not start at the origin]. 
	
	Next we decompose $H_N(\tilde S)-H_N(S)$ as
	\begin{equation}\begin{split}
		&\sum_{x \in \{x^{\tmop{odd}
			\tmop{even}}_{\pm} \} \cup \{S_i;\, i \in I\}}
			 \left[\left( Q_N^x ( \tilde{S})
			\right)^2 - \left( Q_N^x (S) \right)^2\right]\\
		&= \sum_{x \in \{x^{\tmop{odd} / \tmop{even}}_{\pm} \} \cup \{S_i;\,
			i \in I\}} \left( Q_N^x ( \tilde{S}) + Q_N^x (S) \right) \left( Q_N^x (
			\tilde{S}) - Q_N^x (S) \right).
	\end{split}\end{equation}
	Now we observe that: (i) If $x = x^p_{\varepsilon}$, then
	\begin{equation}
		Q_N^x ( \tilde{S}) = Q_N^x (S) + \sum_{i \in I : i \equiv p\,,\,
		\varepsilon q_i > 0} q_i;
	\end{equation}
	and (ii) If $x=S_i$ for some $i\in I$, then
	\begin{equation}
		Q_N^x ( \tilde{S}) = Q_N^x (S) - \sum_{i \in I : S_i = x} q_i .
	\end{equation}
	Consequently, we can write
	\begin{equation}
		H_N ( \tilde{S}) - H_N (S) := T_1-T_2, \label{HStT1T2}
	\end{equation}
	where
	\begin{equation}
		T_1:= \mathop{\sum\sum}\limits_{\substack{
		\varepsilon\in\{-,+\}\\p\in\{\text{\rm odd},
		\text{\rm even}\}}} \left( 2
		Q_N^{x^p_{\varepsilon}} (S) + 
		\sum_{i \in I : i \equiv p, \varepsilon q_i> 0} q_i \right)
		\sum_{i \in I : i \equiv p, \varepsilon q_i > 0} q_i,
	\end{equation}
	and
	\begin{equation}
		T_2:=\sum_{x \in \{S_i;\, i \in I\}} \left( Q_N^x ( \tilde{S}) + Q_N^x
		(S) \right) \sum_{i \in I :\, S_i = x} q_i .  \label{eq:HSt1}
	\end{equation}
	Since $\varepsilon Q_N^{x^p_{\varepsilon}} (S) \geqslant (1 + \alpha) / 2
	Q^p_{\varepsilon} (S)$,
	\begin{equation}
		T_1\geqslant  (1 + \alpha) \mathop{\sum\sum}\limits_{
		\substack{\varepsilon\in\{-,+\}\\p\in\{\text{\rm odd},
		\text{\rm even}\}}}
		Q^p_{\varepsilon} \times \sum_{i \in I :\, i \equiv p} q_i^{\varepsilon} . 
	\label{eq:HSt2}
	\end{equation}
	Let us write, temporarily,
	\begin{equation}
		\mathcal{X}^{\tmop{odd}} :=\left\{S_i;\, i \in I \text{ odd} \right\}.
	\end{equation}
	Then clearly
	\begin{equation}\begin{split}
		&\sum_{x \in \mathcal{X}^{\tmop{odd}}}
			\left( Q_N^x ( \tilde{S}) + Q_N^x (S) \right)
			\sum_{i \in I :\, S_i = x} q_i\\
		& \leqslant  \sum_{\varepsilon\in\{-,+\}} \sum_{x \in
			\mathcal{X}^{\tmop{odd}}} \left( Q_N^x ( \tilde{S}) + Q_N^x (S)
			\right)^{\varepsilon} \left[ \sum_{i \in I : S_i = x} q_i
			\right]^{\varepsilon}\\
		& \leqslant  \sum_{\varepsilon\in\{-,+\}} \max_{x \text{ odd}:\,
			x \neq x^{\tmop{odd}}_{\pm}} \left( Q_N^x ( \tilde{S}) + Q_N^x (S)
			\right)^{\varepsilon} \times \sum_{i \in I:\,i\text{ odd}}
			q_i^{\varepsilon}\\
		& \leqslant  (1 - \alpha) \sum_{\varepsilon\in\{-,+\}}
			Q_{\varepsilon}^{\tmop{odd}} 
			\sum_{i \in I:\,i \text{ odd}} q_i^{\varepsilon};
	\end{split}\end{equation}
	the last line is valid because, whenever 
	$x \neq x^{\tmop{odd}}_{\pm}$ is odd, the quantities $Q_N^x (
	\tilde{S})$ and $Q_N^x (S)$ both lie in the interval $[- \frac12
	(1 - \alpha) Q_-^{\tmop{odd}}, \frac12(1 - \alpha)  Q_+^{\tmop{odd}}]$.
	It follows that
	\begin{equation}
		T_2 \leqslant  (1 - \alpha)
		\mathop{\sum\sum}\limits_{\substack{
		\varepsilon\in\{-,+\}\\
		p\in\{\text{\rm odd},\text{\rm even}\}}} Q_{\varepsilon}^p 
		\sum_{i \in I:\,i \equiv p}
		q_i^{\varepsilon} .  \label{eq:HSt3}
	\end{equation}
	The claims follows from \eqref{HStT1T2}, \eqref{eq:HSt2},
	and \eqref{eq:HSt3}.
\end{proof}

\begin{proof}[Proof of Theorem \ref{thm:log}]
	We begin by deriving \eqref{eq:Diamslog}.
	
	With probability exponentially close to one [as $N \rightarrow \infty$],
	the total charge of the polymer satisfies
	\begin{equation}\label{eq:good:q}
		\sum_{i = 0}^{N - 1} |q_i |  \leqslant  2 N \mathrm{E} |q_0 |.
	\end{equation}
	Therefore, by conditioning, we may [and will] assume that the
	charges satisfy the former inequality. 
	
	Because the $q$'s satisfy \eqref{eq:good:q},
	 it follows that if we modify a single position $S_i$ of the polymer,
	then we change $H_N(S)$ by at most
	$8 N \mathrm{E} |q_0 | \times |q_i |$. 
	Consequently, 
	\begin{equation}\begin{split}
		&\mathrm{P}_N^{\beta} \left( \left. \begin{array}{l}
			\left. \tmop{Diam} \{S_i :\, N_1 \leqslant i < N_2 \right\}\\
			\geqslant (N_2 - N_1 - 1) / 2
			\end{array} \right| S_0, \ldots S_{N_1 - 1}, S_{N_2}, \ldots, S_{N - 1}
			\right)\\
		&\hskip1.3in\geqslant  \frac{1}{(2 d)^{N_2 - N_1}}
			\exp \left( - 8| \beta | \mathrm{E} |q_0 |
			\sum_{N_1 \leqslant i < N_2} |q_i | \right),
	\end{split}\label{eq:Diam1}\end{equation}
	almost surely for every  $N_1 < N_2$ in
	$\{0, \ldots, N\}$.
	Given $L\in\{1\,,\ldots,N-1\}$, define
	\begin{equation}
		\mathcal{K}_L := \left\{ k \in \{1, \ldots, [N / L]\}: \sum_{(k - 1) L
		\leqslant i < kL} |q_i | \leqslant 2 L \mathrm{E} |q_0 | \right\} .
	\end{equation}
	Then, \eqref{eq:Diam1} leads to the bound
	\begin{equation}\begin{split}
		&\mathrm{P}_N^{\beta} \left\{
			\max_{k\in\mathcal{K}_L}
			\tmop{Diam} \{S_i :\, (k - 1) L < i \leqslant kL\} <
			\frac{L-1}{2} \right\}\\
		&\hskip3.2in\leqslant  \left( 1 - a^L \right)^{
			|\mathcal{K}_L|},
	\end{split}\end{equation}
	where $a := \exp \left( - 16| \beta | ( \mathrm{E} |q_0 |)^2 \right) / (2
	d)$. Now we choose $L$ judiciously; namely, we let
	 $L := L_N := [- \varepsilon \ln (N) / \ln (a)]$---so that $a^L
	/ N^{- \varepsilon} \rightarrow 1$ as $N\to\infty$---in order to deduce
	the following:
	\begin{equation}\begin{split}
		&\mathrm{E} \left[ \mathrm{P}_N^{\beta}
			\left\{\tmop{Diam} \{S_i:\, i <
			N\}< \frac{L-1}{2} \right\}\right]\\
		& \leqslant  \mathrm{P} \left\{ \sum_{i = 1}^N
			|q_i | > 2 N \mathrm{E} |q_0 | \right\} + \mathrm{P} \left\{
			|\mathcal{K}_L |\leqslant \frac{N}{2 L} \right\}
			+ \left( 1 - N^{- \varepsilon} \right)^{N / (2 L)}.
	\end{split}\end{equation}
	This yields \eqref{eq:Diamslog}.
  
	We prove \eqref{eq:Diamilog} next. 
	
	If $\mathcal{C}_{\alpha}$
	holds and $S$ has $L$ consecutive nonoptimal monomers, 
	then we can find a contiguous $I \subset \{0\,, \ldots, N - 1\}$
	such that $L\leqslant |I|<N$
	and $S \in \mathcal{N}(I) \cap \mathcal{C}(I)$. There are
	not more than $N^2$ corresponding choices for such an $I$. Therefore,
	\begin{equation}\begin{split}
		&\mathrm{P}_N^{\beta} \left( \left\{ \begin{array}{l}
			S \text{ has } L \text{ consecutive}\\
			\text{nonoptimal monomers}
			\end{array} \right\} \cap \mathcal{C}_{\alpha} \right)\\
		&\hskip1.5in\leqslant  N^2
			\times \sup_{I \tmop{contiguous} : \left| I \right| \geqslant L}
			\mathrm{P}_N^{\beta} \left( \mathcal{N}(I) \cap
			\mathcal{C}(I) \right) .
	\end{split}\label{eq:claimL}\end{equation}
	Consider such a contiguous set $I$. Every $S \in \mathcal{N}(I) \cap
	\mathcal{C}(I)$ gets mapped to $\bar{S} := \tilde{S} - \tilde{S}_0 \in
	\mathcal{C}(I)$, and no more than $(2 d)^{\left| I \right|}$ choices of $S$
	yield the same $\bar{S}$. In addition, Lemma \ref{lem:HSt} and
	the definition \eqref{eq:Gamma} of $\Gamma$ together tell us that
	\begin{equation}\begin{split}
		H_N ( \tilde{S}) - H_N (S) 
			& \geqslant 2 \alpha \mathop{\sum\sum}\limits_{\substack{
			\varepsilon\in\{-,+\}\\p\in\{\text{\rm odd},\text{\rm even}\}}}
			\left[ \sqrt{\Gamma} \times
			\sum_{i \in I :\,i \equiv p} q_i^{\varepsilon} \right]\\
		&= 2 \alpha \sqrt{\Gamma}  \sum_{i \in I} |q_i |.
		\label{eq:DH}
	\end{split}\end{equation}
	Therefore,
	\begin{align}\nonumber
		&\mathrm{P}_N^{\beta} \left(
			\mathcal{N}(I) \cap \mathcal{C}(I) \right) \nonumber\\
		& \leqslant  \exp \left( - 2 \beta \alpha \frac{\sqrt{\Gamma}}{N} 
			\sum_{i \in I} |q_i | \right) \frac{1}{Z_N^{\beta}} \sum_{S \in
			\mathcal{N}(I) \cap \mathcal{C}(I)} \exp \left( - \beta H_N ( \tilde{S}) / N
			\right) \nonumber\\
		& \leqslant  \exp \left( - 2 \beta \alpha \frac{\sqrt{\Gamma}}{N}
			\sum_{i \in I} |q_i | \right) (2 d)^{\left| I \right|} 
			\mathrm{P}_N^{\beta} \left( \mathcal{C}(I) \right)
			\label{eq:PNI1}\\
		& \leqslant  \exp \left( |I| \times \left[ \ln (2 d) - 2 \beta \alpha
			\frac{\sqrt{\Gamma}}{N} \bar{\mathsf{q}}_L \right] \right),
			\label{eq:PNI}
	\end{align}
	owing to the definition of $\bar{\mathsf{q}}_L$. The claim follows
	from this and \eqref{eq:claimL}.
\end{proof}

\begin{proof}[Proof of Theorem \ref{thm:ER}]
	The proof is similar to the proof of Theorem \ref{thm:log}.
	
	Recall that $R_{\alpha}^N$ was defined in \eqref{def:R},
	and define $I :=\{0\,, \ldots, r - 1\}$ for some fixed $1\leqslant r < N$.
	Then, we may use \eqref{eq:PNI1}
	and the obvious fact that $\mathrm{P}_N^\beta(\mathcal{C}(I))
	\leqslant 1$ in order to deduce that
	\begin{equation}\begin{split}
		\mathrm{P}_N^{\beta} \left( \left\{
			R_{\alpha}^N = r \right\} \cap \mathcal{C}_{\alpha}
			\right) &= 
			\mathrm{P}_N^{\beta} \left( \mathcal{N}(I) \cap \mathcal{C}(I)
			\right)\\
		& \leqslant  \exp \left( - 2 \beta \alpha \frac{\sqrt{\Gamma}}{N}
			\sum_{i=0}^{r-1} |q_i | \right) (2 d)^r.
	\end{split}\end{equation}
	Next, we choose and fix an arbitrary $\delta > 0$.,
	and recall from \eqref{def:EN}
	the event $\mathcal{E}_N^{\delta}$. Then
	almost surely on $\mathcal{E}_N^\delta$,
	\begin{equation}
		\mathrm{P}_N^{\beta} \left( \left\{R_{\alpha}^N = r\right\} 
		\cap \mathcal{C}_{\alpha}
		\right) \leqslant  (2 d)^r \exp \left( - 2 \beta \alpha
		\sqrt{\frac{\gamma}{1 + \delta}} \sum_{i=0}^{r-1} 
		|q_i | \right).
	\end{equation}
	Because $R_\alpha^N\leqslant N$, it follows that
	\begin{align} \nonumber
		\mathrm{E}\left[\mathrm{E}_N^{\beta} \left( R_{\alpha}^N \right)\right]
			& \leqslant  N \left[1-\mathrm{P} (\mathcal{E}_N^{\delta}) \right]+ 
			N \mathrm{E}\left[1-\mathrm{P}_N^{\beta}
			\left( \mathcal{C}_{\alpha} \right) \right]\\
		&\hskip.7in+ \sum_{r=0}^{N-1} r \mathrm{E} \left[
			\tmmathbf{1}_{\mathcal{E}^{\delta}_N}  \mathrm{P}_N^{\beta}
			\left(\left\{ R_{\alpha}^N = r\right\}
			\cap \mathcal{C}_{\alpha} \right) \right]\\
		&\leqslant o(1)+\sum_{r=1}^{N-1} r (2 d)^r
			\mathrm{E}\left[\exp \left( - 2 \beta \alpha
			\sqrt{\frac{\gamma}{1 + \delta}} \sum_{i=0}^{r-1} 
			|q_i | \right)\right], \nonumber
	\end{align}
	as $N\to\infty$; see \eqref{eq:pEN} and \eqref{eq:PC}.	
	Define
	\begin{equation}
		\rho_{\delta} := 2 d \mathrm{E} \left[ \exp \left( - \beta \alpha
		\sqrt{\frac{\gamma}{1 + \delta}} |q_0 | \right) \right].
	\end{equation}
	Since $\lim_{\delta\downarrow 0}\rho_\delta=\rho$,
	it follows that $\rho_\delta<1$ for all $\delta>0$ sufficiently small.
	And hence, for all $\delta>0$ sufficiently small,
	\begin{equation}
		\limsup_{N\to\infty}
		\mathrm{E}\left[\mathrm{E}_N^{\beta} \left( R_{\alpha}^N \right)\right]
		\leqslant \sum_{r=1}^\infty r \rho_\delta^r=
		\frac{\rho_\delta}{(1-\rho_\delta)^2}.
	\end{equation}
	Let $\delta \rightarrow 0$ to finish.
\end{proof}

\subsection{On the annealed measure}

Our analysis of the quenched measure can be adapted with no difficulty,
and with some simplifications, to study also the annealed measure. 
Here we prove only Proposition \ref{prop:bca}.

\begin{proof}[Proof of Proposition \ref{prop:bca}]
	We know already from the analog of Theorem \ref{thm:D} that $\tilde{\beta}_c
	\geqslant 1$. Therefore, it suffices to prove that 
	$\mathrm{E}Z_N(1)=\infty$. Let 
	\begin{equation}
		\nu := \nu(N):= \lceil N/2 \rceil.
	\end{equation}
	Because $\mathrm{P}\{ L_N^\star = \nu\}\geqslant (2d)^{-N}$
	for all $N$ sufficiently large,
	it follows immediately from properties of the normal distribution
	that
	\begin{equation}\begin{split}
		\mathrm{E} Z_N (1) &\geqslant (2 d)^{- N} \mathrm{E}
			\left[ \exp\left\{ \frac{
			\left( q_1 + \cdots + q_\nu \right)^2}{N} \right\}\right]\\
		&= (2 d)^{- N} \mathrm{E} \mathrm{e}^{\nu q_0^2/N}.
	\end{split}\end{equation}
	And the latter quantity is infinite because $\nu / N \geqslant 1 / 2$. 
\end{proof}

\subsection{The influence of a pulling force}\label{sec:pulling:proof}

First we justify our claim that the results of
Theorem \ref{thm:D}, Proposition \ref{prop:F}, and Theorem \ref{thm:fo} 
continue to hold [up to a modification of the notation]. Basically,
this is so because Lemma \ref{lem:moments} is the only place where we
explicitly used the fact that $S$ is the simple symetric random walk.
Now the new measure $\mathrm{P}_\lambda$ has the following property:
\begin{equation}
	\mathrm{P}_{\lambda} \{S_k = 0\} = \frac{\mathrm{P}
	\{S_k = 0\}}{(\mathrm{E}\exp (\lambda \cdot S_1))^k},
	\label{eq:lreturn}
\end{equation}
with $\mathrm{E} \exp (\lambda \cdot S_1) > 1$ whenever
$\lambda \neq 0$. Therefore, 
the local time at the origin satisfies $\mathrm{E}_{\lambda} L_N^0
\leqslant \mathrm{E} L_N^0$. This is enough for concluding that even the
statement of Lemma \ref{lem:moments} continues to hold
when we replace $\mathrm{E}$ with $\mathrm{E}_{\lambda}$ for
$\lambda \in \mathbf{R}^d$. Next we
prove Theorem \ref{thm:pulling}.

\begin{proof}[Proof of Theorem \ref{thm:pulling}]
	We begin with the proof of \eqref{eq:bcg}: We
	know already that $\beta_c (\lambda) \geqslant 1 / \kappa$.
	[Theorem \ref{thm:D}]. Let us choose and fix some
	$\varepsilon > 0$. Then we can write
	\begin{equation}\label{eq:2terms}\begin{split}
		&Z_N (\beta\,, \lambda) \\
		&\hskip.2in= Z_N^{(1-\varepsilon)/ (2 \kappa \beta)}
			(\beta\,, \lambda) + \mathrm{E}_{\lambda} \left[ \left. 
			\text{\rm e}^{\beta H_N/N}\tmmathbf{1}_{\mathcal{A}(N)} 
			\,\right|\, q_0, q_1, \ldots, q_{N -
			1} \right],
	\end{split}\end{equation}
	where $Z_N^{\varepsilon} (\beta, \lambda)$ is defined
	by adapting \eqref{eq:Zeps}---in the obvious way---to
	the new reference measure $\mathrm{P}_{\lambda}$,
	and $\mathcal{A}(N)$ denotes the following event:
	\begin{equation}
		\mathcal{A}(N) := \left\{ \frac{L_N^{\star}}{N} >
		\frac{1-\varepsilon}{2 \kappa \beta}\right\}.
	\end{equation}
	We know from Theorem \ref{thm:D}
	that $Z_N^{(1-\varepsilon)/ (2 \kappa \beta)} (\beta, \lambda)
	\rightarrow {\rm e}^{\beta}$ in probability as $N\to\infty$. 
	Next we consider the second term in \eqref{eq:2terms}. 
	
	Because
	\begin{equation}\begin{split}
		&\mathrm{E}_{\lambda} \left[ \left. \exp \left( \frac{\beta}{N} H_N \right)
			\tmmathbf{1}_{\mathcal{A}(N)} 
			\,\right|\, q_0, q_1, \ldots, q_{N
			- 1} \right]\\
		& \hskip1in\leqslant  \exp \left( \beta N \max_S \frac{H_N}{N^2}
			\right) \mathrm{P}_{\lambda} \left\{
			\frac{L_N^{\star}}{N} > \frac{1-\varepsilon}{2
			\kappa \beta}\right\},
	\end{split}\end{equation}
	it follows that
	\begin{equation}
		\digamma_{\lambda} (\beta) \leqslant  \max \left( \beta
		\frac{(\mathrm{E} q_0^+)^2 + 
		(\mathrm{E} q_0^-)^2}{2} - I_{\lambda} \left(
		\frac{1- \varepsilon}{2 \kappa \beta}  \right)\,,\,0 \right),
	\end{equation} 
	where $I_{\lambda}$ was defined in \eqref{eq:Il}. 
	
	We conclude the proof by establishing a lower
	bound for $I_{\lambda}$. 
	
	According to \eqref{eq:lreturn},
	\begin{equation}
		\mathrm{E}_\lambda L_{\infty}^0 \leqslant
		\frac{1}{1-1/\mathrm{E} (\exp (\lambda \cdot S_1)) }.
	\end{equation}
	By the strong Markov property, $L_{\infty}^0$ 
	has a  geometric distribution with parameter
	\begin{equation}
		p := \mathrm{P}_\lambda\left\{ S_i\neq 0\text{ for all 
		$i\geqslant 1$}\right\},
	\end{equation}
	and therefore $p \geqslant 1-1/\mathrm{E} (\exp (\lambda \cdot S_1)) $.
	It follows that
	\begin{equation}
		\mathrm{P}_{\lambda} \left\{
		L^0_N \geqslant \alpha N \right\} \leqslant 
		\mathrm{P}_{\lambda} \left\{ L^0_{\infty} \geqslant \alpha N
		\right\} = (1 - p)^{\lceil\alpha N\rceil- 1},
	\end{equation}
	and consequently
	\begin{eqnarray}
		I_{\lambda} (\alpha) \geqslant \alpha \ln \mathrm{E} (\exp (\lambda
		\cdot S_1)).
	\end{eqnarray}
	The conclusion \eqref{eq:bcg} is immediate. 
	Now we address the opposite bound \eqref{eq:bci}. 
	
	If $e \in \mathbf{Z}^d$ has norm 1, then
	\begin{eqnarray}
		\mathrm{P}_{\lambda} \left\{S_{k + 1} - S_k = e\right\}
		\geqslant \frac{\exp \left( - 2 \left\|
		\lambda \right\|_{\infty} \right)}{2 d},
	\end{eqnarray}
	whence
	\begin{equation}
		Z_N (\beta\,, \lambda) \geqslant\exp \left( \beta N \max_S
		\frac{H_N}{N^2} \right) \times \left( \frac{\exp \left( - 2 \left\|
		\lambda \right\|_{\infty} \right)}{2 d} \right)^N.
  	\end{equation}
	Consequently, \eqref{eq:bci} follows from
	Proposition \ref{prop:M} when $d \geqslant   2$; 
	and \eqref{eq:bci} follows
	from Lemma \ref{lem:maxH:d1} when $d = 1$. 
	
	Finally, we prove that $\beta_c$ is locally Lipschitz.
	
	The density
	\begin{equation}
		\left. \frac{\mathd \mathrm{P}_{\lambda + \mu}}{\mathd
		\mathrm{P}_{\lambda}} \right|_{\sigma (q, S_0, \ldots, S_k)} :=
		\frac{\exp (\mu \cdot S_k)}{(
		\mathrm{E}_{\lambda} \exp (\mu \cdot S_1))^k}
	\end{equation}
	is bounded from above and below respectively by 
	$\exp( \pm 2 k\| \mu\|_1)$. 
	Therefore, for all $\beta \in \mathbf{R}$,
	\begin{equation}\begin{split}
		Z_N (\beta\,, \lambda + \mu) & \geqslant Z_N (\beta\,, \lambda) \exp \left(
			- 2 N \left\| \mu \right\|_{\infty} \right),\\
		\digamma_{\lambda + \mu} (\beta) & \geqslant \digamma_{\lambda} (\beta)
			- 2 \left\| \mu \right\|_\infty.
	\end{split}\end{equation}
	This proves the claim when one chooses
	\begin{equation}
		\beta  >  \beta_c (\lambda) + 2 \frac{\left\| \mu
		\right\|_{\infty}}{\digamma_{\lambda}' (\beta_c (\lambda))},
	\end{equation} 
	for which $\digamma_{\lambda + \mu} (\beta) > 0$ 
	[thanks to the convexity
	of $\digamma_{\lambda + \mu}$]. The lower bound on $\digamma_{\lambda}'
	(\beta_c (\lambda))$ comes from the generalization of \eqref{eq:prop:min} in
	Theorem \ref{thm:fo}.
\end{proof}

\appendix 

\section{The local times of the random walk}
In this appendix we collect some facts about the local times
of the simple random walk $\{S_i\}_{i=0}^\infty$ on 
$\mathbf{Z}^d$.
Recall that the local time at $x$ of the walk is
denoted by the process $\{L_N^x\}_{N=1}^\infty$, and is
defined by $L_N^x := \sum_{0\leqslant i<N} \mathbf{1}_{\{S_i=x\}}$.

\begin{lemma}\label{lem:LT0}
	There exists a continuous nondecreasing
	function $I:(0\,,1/2)\to(0\,,\infty)$
	such that
	\begin{equation}
		\lim_{N\to\infty} \frac1N\ln
		\mathrm{P}\left\{ L_N^0 > \varepsilon N\right\}
		=-I(\varepsilon)\qquad\text{for all 
		$\varepsilon\in(0\,,1/2)$}.
	\end{equation}
\end{lemma}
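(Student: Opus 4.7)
The plan is to combine a two-dimensional subadditivity in $(N,k)$ with explicit upper and lower tail estimates on $\mathrm{P}\{L_N^0>\varepsilon N\}$ that place $I(\varepsilon)$ in $(0,\infty)$ on $(0,1/2)$. Let $T_1,T_2,\ldots$ denote the successive (possibly infinite) return times of the walk to $0$ and set $\sigma_k:=T_1+\cdots+T_k$. Since $S_0=0$, we have $\{L_N^0>k\}=\{\sigma_k\leq N-1\}$, and the strong Markov property gives
\begin{equation*}
\mathrm{P}\{\sigma_{k_1+k_2}\leq N_1+N_2-1\}\;\geq\;\mathrm{P}\{\sigma_{k_1}\leq N_1-1\}\cdot\mathrm{P}\{\sigma_{k_2}\leq N_2-1\}.
\end{equation*}
Setting $\psi_N(\varepsilon):=-\ln\mathrm{P}\{L_N^0>\varepsilon N\}$, using the monotonicity of $k\mapsto\mathrm{P}\{\sigma_k\leq N-1\}$, and invoking $\lceil\varepsilon(N_1+N_2)\rceil\leq\lceil\varepsilon N_1\rceil+\lceil\varepsilon N_2\rceil$ yields the subadditive inequality $\psi_{N_1+N_2}(\varepsilon)\leq\psi_{N_1}(\varepsilon)+\psi_{N_2}(\varepsilon)$, so Fekete's lemma produces $I(\varepsilon):=\lim_{N\to\infty}\psi_N(\varepsilon)/N\in[0,\infty]$.

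Next I would establish the bounds $0<I(\varepsilon)<\infty$. For the upper bound on $I$, the event that the walk oscillates between $0$ and a fixed neighbor during the first $2\lceil\varepsilon N\rceil$ steps lies in $\{L_N^0>\varepsilon N\}$ and has probability $(2d)^{-2\lceil\varepsilon N\rceil}$, so $I(\varepsilon)\leq 2\varepsilon\ln(2d)$. For the lower bound: in dimension $d\geq 3$, $L_\infty^0$ is geometric with parameter $q:=\mathrm{P}\{T_1<\infty\}\in(0,1)$, so $\mathrm{P}\{L_N^0>\varepsilon N\}\leq q^{\lceil\varepsilon N\rceil}$ and $I(\varepsilon)\geq-\varepsilon\ln q>0$ immediately; in dimensions $d\in\{1,2\}$ the walk is recurrent but $\mathrm{E}T_1=\infty$, and since $T_i\geq 2$ a.s.\ one has $\{\sigma_k\leq N-1\}\subseteq\{Y_1+\cdots+Y_k\leq N-1-2k\}$ with $Y_i:=T_i-2\geq 0$; a Chernoff bound with parameter $\theta>0$ applied to this nonnegative i.i.d.\ sum yields the exponential rate $\varepsilon\ln\mathrm{E}[e^{-\theta Y_1}]+\theta(1-2\varepsilon)$, which can be made strictly negative for every $\varepsilon\in(0,1/2)$ by taking $\theta$ small, because $-\theta^{-1}\ln\mathrm{E}[e^{-\theta Y_1}]\to\mathrm{E}Y_1=\infty$ as $\theta\downarrow 0$.

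Monotonicity of $I$ is immediate from the inclusion $\{L_N^0>\varepsilon'N\}\subseteq\{L_N^0>\varepsilon N\}$ for $\varepsilon\leq\varepsilon'$. For continuity, I would iterate the same subadditivity against an integer combination $(m+n)\varepsilon=m\varepsilon_1+n\varepsilon_2$ and divide by $(m+n)N$ to obtain the rational convexity $I(\varepsilon)\leq\tfrac{m}{m+n}I(\varepsilon_1)+\tfrac{n}{m+n}I(\varepsilon_2)$; combined with monotonicity and the finiteness established above this promotes to convexity of $I$ on $(0,1/2)$, which automatically gives continuity. The main obstacle is precisely this convexity-to-continuity step: subadditivity only directly produces convex combinations with rational weights, and one must lean on both monotonicity and finiteness of $I$ to extend to arbitrary convex combinations on the full open interval.
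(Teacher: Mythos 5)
Your approach is genuinely different from the paper's. The paper simply cites Jain and Pruitt \cite[Theorem~2.1]{JP87}, which already supplies the limit for $\frac{1}{N}\ln\mathrm{P}\{\sigma_{[\varepsilon N]}<N\}$ as an explicit continuous function of $\varepsilon$; the paper has essentially nothing to prove beyond noting that $\{L_N^0>\varepsilon N\}=\{\sigma_{[\varepsilon N]}<N\}$. Your plan (a subadditivity/Fekete argument for existence, Chernoff-type bounds for $0<I<\infty$, and convexity for continuity) is self-contained, and most of its pieces check out: the two-dimensional Markov-property estimate is correct, the upper bound $I(\varepsilon)\leqslant2\varepsilon\ln(2d)$ via the oscillating trajectory is correct, the geometric bound for $d\geqslant3$ is correct, and the Chernoff bound in $d\in\{1,2\}$ using $\theta^{-1}\bigl(1-\mathrm{E}e^{-\theta Y_1}\bigr)\to\mathrm{E}Y_1=\infty$ is correct. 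The worry you flag at the end---passing from rational to arbitrary convex weights---is in fact \emph{not} an obstacle: for $\varepsilon_1<\varepsilon_2$ and rational $t_n\uparrow t$, monotonicity gives $I(t\varepsilon_1+(1-t)\varepsilon_2)\leqslant I(t_n\varepsilon_1+(1-t_n)\varepsilon_2)\leqslant t_nI(\varepsilon_1)+(1-t_n)I(\varepsilon_2)$, and one simply lets $n\to\infty$.

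The genuine gap is earlier, in the claimed subadditivity $\psi_{N_1+N_2}(\varepsilon)\leqslant\psi_{N_1}(\varepsilon)+\psi_{N_2}(\varepsilon)$. Since $L_N^0$ is a positive integer, the event is $\{L_N^0>\varepsilon N\}=\{L_N^0\geqslant\lfloor\varepsilon N\rfloor+1\}=\{\sigma_{\lfloor\varepsilon N\rfloor}\leqslant N-1\}$, i.e.\ it is governed by the \emph{floor}, not the ceiling. Combining the two-dimensional Markov estimate with monotonicity of $k\mapsto\mathrm{P}\{\sigma_k\leqslant N-1\}$ therefore requires
$\lfloor\varepsilon(N_1+N_2)\rfloor\leqslant\lfloor\varepsilon N_1\rfloor+\lfloor\varepsilon N_2\rfloor$,
but the opposite inequality $\lfloor\varepsilon N_1\rfloor+\lfloor\varepsilon N_2\rfloor\leqslant\lfloor\varepsilon(N_1+N_2)\rfloor$ is the one that holds, and it can be strict. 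The ceiling inequality you invoke, $\lceil\varepsilon(N_1+N_2)\rceil\leqslant\lceil\varepsilon N_1\rceil+\lceil\varepsilon N_2\rceil$, is true, but it controls the auxiliary quantity $\tilde\psi_N(\varepsilon):=-\ln\mathrm{P}\{\sigma_{\lceil\varepsilon N\rceil}\leqslant N-1\}$, which differs from $\psi_N(\varepsilon)$. So the clean Fekete step fails as written. A patch along your lines would run the whole subadditivity--convexity--continuity program for $\tilde\psi$ (where the ceiling inequality really does give subadditivity, and the same rational-convexity argument gives a continuous limit $\tilde I$ on $(0,1/2)$), and then squeeze $\psi_N(\varepsilon)/N$ between $\tilde\psi_N(\varepsilon)/N$ and $\tilde\psi_N(\varepsilon')/N$ for $\varepsilon'<\varepsilon$, invoking the continuity of $\tilde I$ to close the sandwich; alternatively, one could use a de~Bruijn--Erd\H{o}s generalisation of Fekete allowing an $O(1)$ error term. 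As it stands, though, the subadditive inequality you state is incorrect and the existence of the limit is not established.
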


In fact, the limit exists for all $\varepsilon>0$.
But the additional gain in generality
is uninteresting because
$\mathrm{P}\{L_N^0>\varepsilon N\}=0$---whence
$I(\varepsilon)=\infty$---when $\varepsilon\geqslant 1/2$, since the simple walk
on $\mathbf{Z}^d$ has period 2.

\begin{proof}
	Let $\tau_0:=0$ and for $k\geqslant 1$ define
	$\tau_k$ to be the $k$th return time
	to the origin by the random walk; that is,
	$\tau_k:=\min\{j>\tau_{k-1}:\, S_j=0\}$.
	It is easy to see that $L_N^0>\varepsilon N$
	if and only if $\tau_{[\varepsilon N]}<N$. According 
	to a result of Jain and Pruitt \cite[Theorem 2.1]{JP87},
	\begin{equation}
		\lim_{N\to\infty}\frac1N\mathrm{P}
		\left\{ \tau_{[\varepsilon N]}<N\right\}
		=-R\left( g^{-1}(1/\varepsilon)\right)
		\quad\text{for all $\varepsilon\in(0\,,1/2),$}
	\end{equation}
	where $R$ is continuous, $g$ is continuous and
	strictly decreasing, and both are defined as follows:
	\begin{equation}
		g(u) := -\frac{\varphi'(u)}{\varphi(u)}\quad
		\text{and}\quad
		R(u):=-\ln\varphi(u)-ug(u),
	\end{equation}
	where $\varphi(u)=\mathrm{E}\exp(-u\tau_1)$.
	This implies our lemma
	with
	\begin{equation}
		I(\varepsilon):=(R\circ g^{-1})(1/\varepsilon)
		\quad\text{for all $\varepsilon\in(0\,,1/2).$}
	\end{equation}
	Let us also mention that
	$\varphi$  can be computed, in a standard way, by
	appealing to excursion theory \cite[Lemma 2.1]{K}. The end-result is that
	\begin{equation}
		\varphi(u) = \frac{1}{(2\pi)^d}\int_{(-\pi,\pi)^d}
		\frac{{\rm d}\xi}{1- G(\xi){\rm e}^{-u}},
	\end{equation}
	where  $G(\xi) := d^{-1}\sum_{j=1}^d\cos(\xi\cdot
	\mathbf{e}_j)$ for the $d$ standard basis vectors
	$\{\mathbf{e}_j\}_{j=1}^d$ of $\mathbf{R}^d$.
	We omit the details of this standard computation.
\end{proof}

Recall that $L_N^\star:=\sup_{x\in\mathbf{Z}^d}L_N^x$ denotes the
maximum local time.

\begin{lemma}\label{lem:LTstar}
	For every fixed $x\in\mathbf{Z}^d$, $L_N^x$ is stochastically
	smaller than $L_N^0$. Therefore, for the same 
	function $I$ as in Lemma \ref{lem:LT0},
	\begin{equation}
		\lim_{N\to\infty}\frac1N\mathrm{P}\left\{
		L_N^\star >\varepsilon N\right\}=-I(\varepsilon)
		\qquad\text{for all $\varepsilon\in(0\,,1/2)$}.
	\end{equation}
\end{lemma}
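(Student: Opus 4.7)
The plan is to first establish the stochastic domination $L_N^x \preceq L_N^0$ by a strong-Markov argument at the hitting time of $x$, then combine it with a crude union bound and Lemma \ref{lem:LT0} to extract the correct exponential rate. A matching lower bound is immediate.

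For the stochastic comparison, let $\tau_x := \inf\{i \geqslant 0 : S_i = x\}$. On $\{\tau_x \geqslant N\}$ we have $L_N^x = 0$, so there is nothing to prove. On $\{\tau_x = k\}$ with $0 \leqslant k < N$, the strong Markov property implies that $\{S_{k+j} - x\}_{j \geqslant 0}$ is a simple random walk starting at the origin, and consequently
\begin{equation}
L_N^x = \sum_{i=k}^{N-1} \mathbf{1}_{\{S_i = x\}} \stackrel{d}{=} L_{N-k}^0 \leqslant L_N^0,
\end{equation}
where the inequality is deterministic in the variables of the shifted walk, since $N \mapsto L_N^0$ is nondecreasing. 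Averaging over the conditional law of the shifted walk and then over $\tau_x$ yields $\mathrm{P}\{L_N^x > t\} \leqslant \mathrm{P}\{L_N^0 > t\}$ for every $t$, i.e.\ the claimed stochastic domination.

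For the exponential rate, note that the simple random walk satisfies $\|S_i\|_\infty \leqslant i$, so at most $(2N+1)^d$ sites can have positive local time by time $N$. A union bound combined with the stochastic domination gives
\begin{equation}
\mathrm{P}\{L_N^\star > \varepsilon N\} \leqslant \sum_{\|x\|_\infty \leqslant N} \mathrm{P}\{L_N^x > \varepsilon N\} \leqslant (2N+1)^d\, \mathrm{P}\{L_N^0 > \varepsilon N\}, \label{eq:SM}
\end{equation}
so $\limsup_{N\to\infty} N^{-1} \ln \mathrm{P}\{L_N^\star > \varepsilon N\} \leqslant -I(\varepsilon)$ by Lemma \ref{lem:LT0}, the polynomial prefactor being irrelevant on the exponential scale. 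The converse inequality is trivial: $L_N^\star \geqslant L_N^0$, hence $\mathrm{P}\{L_N^\star > \varepsilon N\} \geqslant \mathrm{P}\{L_N^0 > \varepsilon N\}$, and Lemma \ref{lem:LT0} again yields $\liminf_{N\to\infty} N^{-1} \ln \mathrm{P}\{L_N^\star > \varepsilon N\} \geqslant -I(\varepsilon)$. Combining these two bounds completes the proof. There is no real obstacle here; the only point that requires care is remembering that the walk cannot reach sites at distance more than $N$ in $N$ steps, which keeps the number of terms in the union bound subexponential.
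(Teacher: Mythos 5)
Your proof is correct and follows essentially the same route as the paper: stochastic domination $L_N^x\preceq L_N^0$ via the strong Markov property at the first hitting time of $x$, then a polynomial union bound over the $O(N^d)$ reachable sites and the trivial lower bound $L_N^\star\geqslant L_N^0$, finishing with Lemma \ref{lem:LT0}. The only differences (using $\|x\|_\infty\leqslant N$ instead of $\|x\|_1\leqslant N$, hence $(2N+1)^d$ in place of $(2N)^d$) are immaterial on the exponential scale.
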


\begin{proof}
	Recall that the assertion about stochastic monotonicity
	is simply that 
	$\mathrm{P} \{ L_N^x >a \}
	\leqslant \mathrm{P} \{ L_N^0>a \}$
	for all $a\in\mathbf{Z}^d$. 
	This is a ready consequence of the strong Markov
	property [applied at the first hitting time of the origin].
	Because
	\begin{equation}
		\mathrm{P}\left\{ L_N^0>a\right\}
		\leqslant
		\mathrm{P}\left\{ L_N^\star > a\right\}
		\leqslant\sum_{\substack{x\in\mathbf{Z}^d:\\
		\|x\|_1\leqslant n}}
		\mathrm{P}\left\{ L_N^x>a\right\},
	\end{equation}
	stochastic monotonicity implies that for all $N\geqslant 1$,
	\begin{equation}\label{eq:SM}
		\mathrm{P}\left\{ L_N^0>a\right\}
		\leqslant
		\mathrm{P}\left\{ L_N^\star > a\right\}
		\leqslant (2N)^d
		\mathrm{P}\left\{ L_N^0>a\right\}.
	\end{equation}
	Therefore, Lemma \ref{lem:LT0} finishes the proof.
\end{proof}

{\noindent}\tmtextbf{Acknowledgements.} We are grateful to Amine Asselah and
Bernard Derrida for references and helpful discussions. It is a pleasure as
well to thank Nicolas Petrelis for his helpful comments on the first-order phase
transition, and Dmitry Ioffe for suggesting that we include the
material in Section~\ref{sec:pulling}. We are also grateful to Romain Abraham for suggesting the present formulation of part 3 of Theorem \ref{thm:D}.

\begin{small}

\bibliographystyle{plain}
\bibliography{biblio}

 \vskip1cm

{\noindent}\tmtextbf{Yueyun Hu.} D\'epartement de Math\'ematiques,
Universit\'e Paris XIII, 99 avenue J-B Cl\'ement, F-93430 Villetaneuse,
France, {\hspace{1em}}\tmtextit{Email:}
\tmtexttt{yueyun@math.univ-paris13.fr}\\

{\noindent}\tmtextbf{Davar Khoshnevisan.} Department of Mathematics,
University of Utah, 155 South 1440 East, JWB 233, Salt Lake City, Utah
84112-0090, USA,\\
\tmtextit{Email:} \tmtexttt{davar@math.utah.edu}\\

{\noindent}\tmtextbf{Marc Wouts.} D\'epartement de Math\'ematiques,
Universit\'e Paris XIII, 99 avenue J-B Cl\'ement, F-93430 Villetaneuse,
France, {\hspace{1em}}\tmtextit{Email:} \tmtexttt{wouts@math.univ-paris13.fr
}\\
\end{small}

\end{document}